\numberwithin{equation}{section}
\theoremstyle{plain}
\newtheorem{theorem}{Theorem}[section]
\newtheorem{proposition}[theorem]{Proposition}
\newtheorem{corollary}[theorem]{Corollary}
\newtheorem{lemma}[theorem]{Lemma}
\newtheorem{lemma-definition}[theorem]{Lemma-Definition}
\theoremstyle{definition}
\newtheorem{definition}[theorem]{Definition}
\newtheorem{remark}[theorem]{Remark}
\newtheorem{example}[theorem]{Example}
\newcommand{\Addresses}{{% additional braces for segregating \footnotesize
  \bigskip
\footnotesize
\textsc{Universit\'e de Strasbourg\\
Institut de recherche math\'ematique avanc\'ee (IRMA), \\Strasbourg, France}\par\nopagebreak
\textit{E-mail address:} \texttt{shahmath19@gmail.com, shah.faisal@unistra.fr}}}
\begin{document}
\title{A proof of Gromov's non-squeezing theorem}
\author{Shah Faisal}
\date{}
\maketitle
\begin{abstract}
The original proof of the Gromov's non-squeezing theorem \cite{Gromov85} is based on pseudo-holomorphic curves. The central ingredient is the compactness of the moduli space of pseudo-holomorphic spheres in the symplectic manifold $(\mathbb{CP}^1\times T^{2n-2}, \omega_{\mathrm{FS}}\oplus \omega_{\mathrm{std}})$ representing the homology class $[\mathbb{CP}^1\times\{\operatorname{pt}\}]$.  In this article, we give two proofs of this compactness. The fact that the moduli space carries the minimal positive symplectic area is essential to our proofs. The main idea is to reparametrize the curves to distribute the symplectic area evenly and then apply either the mean value inequality for pseudo-holomorphic curves or the Gromov-Schwarz lemma to obtain a uniform bound on the gradient. Our arguments avoid bubbling analysis and Gromov's removable singularity theorem, which makes our proof of Gromov's non-squeezing theorem more elementary.
 \end{abstract}	
\tableofcontents
\parskip=4pt
\section{Introduction}
Let $(x_1, y_1,\dots,x_n, y_n)$ denote the standard coordinates on the Euclidean space $\mathbb{R}^{2n}$.
The standard open ball of capacity $r>0$, denoted by $B^{2n}(r)$, is defined by
\[B^{2n}(r):=\big\{(x_1,y_1,\dots, x_n,y_n)\in \mathbb{R}^{2n}: \sum_{i=1}^n\pi(x_i^2+y_i^2) < r\big\}.\]
We equip $B^{2n}(r)$ with the standard symplectic form $\omega_{\mathrm{std}}:=\sum_{1}^{n}dx_i\wedge dy_i$.

The celebrated Gromov's non-squeezing theorem is stated as follows.
\begin{theorem}[\cite{Gromov85}]\label{gromov-nonsqueezing}
	There exists a symplectic embedding 
	\[\psi: (B^{2n}(r), \omega_{\mathrm{std}})\to (B^2(R)\times \mathbb{R}^{2n-2}, \omega_{\mathrm{std}})\] 
if and only if $r\leq R$.
\end{theorem}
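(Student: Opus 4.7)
The ``if'' direction is immediate: when $r \leq R$, the inclusion of $B^{2n}(r)$ into $B^2(R)\times\mathbb{R}^{2n-2}$ is a symplectic embedding. All the content lies in the converse, which I would prove by the pseudo-holomorphic curve strategy that Gromov introduced.

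Suppose $\psi: B^{2n}(r) \to B^2(R)\times \mathbb{R}^{2n-2}$ is a symplectic embedding. Since $\psi(B^{2n}(r))$ is bounded, after a translation it lies inside $B^2(R)\times(-L,L)^{2n-2}$ for some $L>0$, hence inside $B^2(R)\times T^{2n-2}$ where $T^{2n-2}$ is a flat torus of side length exceeding $2L$. This in turn embeds symplectically into $(M,\Omega):=(\mathbb{CP}^1\times T^{2n-2},\,\omega_{\mathrm{FS}}\oplus\omega_{\mathrm{std}})$, where $\omega_{\mathrm{FS}}$ is normalized so that $\int_{\mathbb{CP}^1}\omega_{\mathrm{FS}}=R+\varepsilon$ for an arbitrarily small $\varepsilon>0$. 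The class $A:=[\mathbb{CP}^1\times\{\mathrm{pt}\}]$ is spherical and carries the minimal positive symplectic area $R+\varepsilon$ on $M$, a fact that will be crucial later.

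Next I set up a parametric moduli problem. The split complex structure $J_0$ on $M$ is tamed by $\Omega$, and for every $q\in T^{2n-2}$ the sphere $\mathbb{CP}^1\times\{q\}$ is $J_0$-holomorphic in class $A$. Let $J_1$ be any $\Omega$-tame almost complex structure on $M$ that agrees with $\psi_{\ast}J_0$ on $\psi(B^{2n}(r))$, and pick a smooth path $\{J_t\}_{t\in[0,1]}$ of $\Omega$-tame almost complex structures joining them. Let $\mathcal{M}(J_t)$ denote the moduli space of unparametrized $J_t$-holomorphic spheres in class $A$ passing through $\psi(0)$. Granting the compactness of the total space $\bigsqcup_{t\in[0,1]}\mathcal{M}(J_t)\times\{t\}$, which is the central result of this paper, a standard cobordism argument (starting from $\mathcal{M}(J_0)=\{\mathbb{CP}^1\times\{\pi_2(\psi(0))\}\}$ and noting that $A$ is a primitive class of minimal area so that no bubbling can occur in class $A$) yields a $J_1$-holomorphic sphere $u:S^2\to M$ through $\psi(0)$ with $\int_{S^2}u^{\ast}\Omega=R+\varepsilon$. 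Pulling back a connected component of $u^{-1}(\psi(B^{2n}(r)))$ containing $u^{-1}(\psi(0))$ by $\psi^{-1}\circ u$ produces a proper $J_0$-holomorphic curve $\Sigma$ in $B^{2n}(r)$ through the origin. The classical monotonicity (isoperimetric) inequality for minimal surfaces in $\mathbb{R}^{2n}$ gives $\mathrm{Area}(\Sigma)\geq r$, while $\mathrm{Area}(\Sigma)\leq \int_{S^2}u^{\ast}\Omega = R+\varepsilon$; letting $\varepsilon\to 0$ concludes $r\leq R$.

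The principal obstacle is plainly the compactness of the parametric moduli space: an a priori unbounded gradient could make a sequence of $J_{t_k}$-holomorphic spheres in class $A$ bubble off, and traditionally one resolves this via Gromov's compactness theorem together with the removable singularity theorem. The paper's contribution, as announced in the abstract, is to sidestep both tools by exploiting the minimality of $R+\varepsilon$: after reparametrizing each curve so that $\Omega$-area is distributed evenly over $S^2$, one obtains a uniform $L^{\infty}$ bound on the gradient directly from either the mean value inequality for pseudo-holomorphic curves or the Gromov--Schwarz lemma. Elliptic bootstrapping then upgrades this to $C^{\infty}$ convergence, yielding compactness. I expect the technical heart of the paper to lie in justifying that the reparametrization is well-defined and that the resulting gradient bound is genuinely uniform in $t$.
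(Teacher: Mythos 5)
Your proposal follows essentially the same route as the paper: compactify $B^2(R)\times\mathbb{R}^{2n-2}$ to $\mathbb{CP}^1\times T^{2n-2}$ with $\int_{\mathbb{CP}^1}\omega_{\mathrm{FS}}=R+\varepsilon$, produce a $J$-holomorphic sphere in $[\mathbb{CP}^1\times\{\mathrm{pt}\}]$ through $\psi(0)$ by a parametric cobordism/degree argument that hinges on the paper's compactness theorem, pull the curve back by $\psi^{-1}$, and apply monotonicity in $\mathbb{R}^{2n}$. This is exactly what the paper does, factored through its more general Theorem \ref{nons}.

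There is, however, one genuine flaw you should fix, and it conceals two further ones. You write ``Since $\psi(B^{2n}(r))$ is bounded, after a translation it lies inside $B^2(R)\times(-L,L)^{2n-2}$.'' That is false as stated: $B^{2n}(r)$ is a bounded \emph{open} set, and a symplectic embedding into the unbounded target $B^2(R)\times\mathbb{R}^{2n-2}$ need not send it to a bounded set (the image can ``snake off'' to infinity in the $\mathbb{R}^{2n-2}$-directions as one approaches $\partial B^{2n}(r)$). The paper sidesteps this by first restricting $\psi$ to the closed ball $\bar B^{2n}(r-\epsilon)$, whose image \emph{is} compact, and then letting $\epsilon\to0$ at the end. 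Passing to this inner compact ball also repairs two other steps you elide: (i) your almost complex structure $J_1$ is required to agree with $\psi_*J_0$ on the \emph{open} set $\psi(B^{2n}(r))$, whereas the extension result used (Proposition \ref{exist}, via Whitney extension) needs a compact domain -- $\psi(\bar B^{2n}(r-\epsilon))$ -- to be applicable; and (ii) to know that the connected component of $u^{-1}(\psi(\bar B^{2n}(r-\epsilon)))$ containing $u^{-1}(\psi(0))$ has boundary on $\partial\bar B^{2n}(r-\epsilon)$ (which the monotonicity lemma requires), one needs the Stokes-theorem observation that $u(\mathbb{CP}^1)$ cannot lie entirely inside $\psi(B^{2n}(r))$, since $\omega_{\mathrm{std}}$ is exact there while $\int u^*\Omega>0$; you assert properness without this argument. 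Finally, a smaller point: you work throughout with $\Omega$-\emph{tame} almost complex structures, but Theorem \ref{compact1} (and the mean value/Gromov--Schwarz machinery behind it) is stated for $\Omega$-\emph{compatible} ones, since the proof uses $\Omega(\cdot,J\cdot)$ as a Riemannian metric; you should restrict to $\mathcal{J}_c$, which costs nothing.
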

The ``if'' part of this theorem is trivial: for $r\leq R$, the inclusion is a symplectic embedding. The following more general theorem implies the ``only if'' part (cf. Corollary \ref{nons1}). 
\begin{theorem}\label{nons}
Let $(M, \omega)$ be a closed symplectic manifold of dimension $(2n-2)\geq 2$ with vanishing second homotopy group, i.e., $\pi_2 (M)=0$. Let $\sigma $ be an area form on $\mathbb{CP}^1$. If there exists a symplectic embedding 
\[\psi: ( B^{2n}(r), \omega_{\mathrm{std}})\to (\mathbb{CP}^1\times M, \sigma\oplus \omega),\] 
then 
	\[r\leq \int_{\mathbb{CP}^1}\sigma.\]
\end{theorem}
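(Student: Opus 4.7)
The plan is to follow Gromov's pseudo-holomorphic curve strategy. Suppose for contradiction that the embedding $\psi$ exists with $r > \int_{\mathbb{CP}^1}\sigma$. The target $(\mathbb{CP}^1\times M,\sigma\oplus\omega)$ enjoys a strong asphericity property: since $\pi_2(M)=0$, the Künneth/Hurewicz theorems give $\pi_2(\mathbb{CP}^1\times M)\cong \mathbb{Z}$, generated by $A:=[\mathbb{CP}^1\times\{\mathrm{pt}\}]$, and this class carries the minimal positive symplectic area, namely $\int_{\mathbb{CP}^1}\sigma$. This minimality will ultimately prevent any kind of degeneration.

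Next I would choose a compatible almost complex structure $J$ on $\mathbb{CP}^1\times M$ that coincides with the pushforward $\psi_*J_{\mathrm{std}}$ on the image of a slightly shrunken ball $\psi(B^{2n}(r'))$ and equals the split product $j_{\mathbb{CP}^1}\oplus J_M$ outside a slightly larger neighbourhood of $\psi(B^{2n}(r))$, interpolated by an $\omega$-compatible family. Interpolating between the split $J_0=j_{\mathbb{CP}^1}\oplus J_M$ and $J$ through a one-parameter family $\{J_t\}$, I would examine the moduli space $\mathcal{M}(A,J_t)$ of unparametrized $J_t$-holomorphic spheres in class $A$. For $t=0$ the explicit family $\{\mathbb{CP}^1\times\{m\}\}_{m\in M}$ exhibits a $J_0$-holomorphic sphere through every point. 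A standard open-and-closed continuation argument (transversality for openness; compactness for closedness) then shows that the evaluation map remains surjective for every $J_t$, so in particular there exists a $J$-holomorphic sphere $u\colon \mathbb{CP}^1\to \mathbb{CP}^1\times M$ passing through $\psi(0)$, with total symplectic area $\int_{\mathbb{CP}^1}\sigma$.

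Given such a $u$, let $U=u^{-1}(\psi(B^{2n}(r)))$. The composition $v:=\psi^{-1}\circ u|_U$ is a non-constant $J_{\mathrm{std}}$-holomorphic, hence genuinely complex analytic, map into $B^{2n}(r)$ that is proper as a map $U\to B^{2n}(r)$ and that passes through the origin. The monotonicity formula for minimal surfaces (equivalently, Lelong's inequality for complex analytic curves) gives
\[
r \;\leq\; \int_U v^*\omega_{\mathrm{std}} \;=\; \int_U u^*(\sigma\oplus\omega) \;\leq\; \int_{\mathbb{CP}^1} u^*(\sigma\oplus\omega) \;=\; \int_{\mathbb{CP}^1}\sigma,
\]
contradicting the assumption $r>\int_{\mathbb{CP}^1}\sigma$. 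The quantitative statement $r\leq \int_{\mathbb{CP}^1}\sigma$ follows.

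The hard step, and the point on which this paper claims novelty, is the compactness of $\mathcal{M}(A,J_t)$ needed to run the continuation argument. In the classical approach this is obtained from Gromov's compactness theorem, which rests on bubbling analysis and the removable singularity theorem. Following the strategy outlined in the abstract, I would instead argue directly: given a sequence of $J_t$-holomorphic spheres in class $A$, reparametrize each by a Möbius transformation so that the pulled-back symplectic form $u_k^*(\sigma\oplus\omega)$ is equidistributed on $\mathbb{CP}^1$. Since the total area is the fixed minimal value $\int_{\mathbb{CP}^1}\sigma$, this pins down the local energy a priori; an application of either the mean value inequality for pseudo-holomorphic curves or the Gromov--Schwarz lemma should then convert this local energy bound into a pointwise gradient bound. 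Standard elliptic bootstrapping finishes the job. The fact that $A$ has \emph{minimal} positive area is what rules out loss of energy (there is no smaller bubble to absorb it), and this is precisely what makes the equidistribution trick available.
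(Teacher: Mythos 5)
Your proposal follows essentially the same route as the paper: extend $\psi_*J_{\mathrm{std}}$ to an $(\sigma\oplus\omega)$-compatible almost complex structure, produce a $J$-holomorphic sphere in the class $[\mathbb{CP}^1\times\{\operatorname{pt}\}]$ through $\psi(0)$ by a continuation/degree argument starting from the split structure $i\oplus J_M$, and then apply the monotonicity (Lelong) inequality to $\psi^{-1}\circ u$ to extract the area bound. The one bookkeeping slip is that since $J$ agrees with $\psi_*J_{\mathrm{std}}$ only on $\psi(B^{2n}(r'))$ with $r'<r$, the map $\psi^{-1}\circ u$ is $J_{\mathrm{std}}$-holomorphic only on $u^{-1}(\psi(B^{2n}(r')))$, so the Lelong estimate yields $r'\leq\int_{\mathbb{CP}^1}\sigma$ and you must then let $r'\to r$ (the paper's explicit $\epsilon$-shrinking step); as written you apply it on $u^{-1}(\psi(B^{2n}(r)))$ where $v$ need not be holomorphic.
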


The proof of this theorem is based on pseudo-holomorphic curves theory. To be more specific, the following existence result plays the main role in the proof. \begin{theorem}\label{ab} Assume the setup of Theorem \ref{nons}. Given any $(\sigma\oplus \omega)$-compatible almost complex structure $J$ on $\mathbb{CP}^1\times M$, for every point $p\in \mathbb{CP}^1\times M$ there exists a $J$-holomorphic sphere $u:(\mathbb{CP}^1,i)\to (\mathbb{CP}^1\times M,J)$ that passes through $p$ and represents the homology class $[\mathbb{CP}^1\times \{\operatorname{pt}\}] \in H_2(\mathbb{CP}^1\times M,\mathbb{Z} )$.
\end{theorem}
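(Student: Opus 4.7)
The plan is to use the classical cobordism argument for the moduli space of pseudo-holomorphic spheres in the space of compatible almost complex structures, and then invoke the compactness theorem established in this article to close the argument. First I would verify the existence for a preferred split structure, then propagate it through a path of almost complex structures to the given $J$.

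To begin, I would fix an $\omega$-compatible almost complex structure $J_M$ on $M$ and set $J_0:=i\oplus J_M$, which is $(\sigma\oplus\omega)$-compatible. For any $p=(z_p,q_p)$, the inclusion $z\mapsto(z,q_p)$ is a $J_0$-holomorphic sphere in class $A:=[\mathbb{CP}^1\times\{\operatorname{pt}\}]$ passing through $p$. Moreover, using $\pi_2(M)=0$, every $J_0$-holomorphic sphere in class $A$ is of this form modulo reparametrization: the $M$-component must be a null-homotopic $J_M$-holomorphic sphere, hence of zero symplectic energy and therefore constant, while the $\mathbb{CP}^1$-component is a degree-one holomorphic self-map, hence a M\"obius transformation. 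I would also verify surjectivity of the linearized Cauchy--Riemann operator at this model sphere by decomposing $u_0^*T(\mathbb{CP}^1\times M)\cong T\mathbb{CP}^1\oplus \underline{T_{q_p}M}$ and noting that both summands have vanishing first cohomology.

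Next, I would connect $J_0$ to $J$ by a smooth path $\{J_t\}_{t\in[0,1]}$ (possible because the space of compatible almost complex structures is contractible) and form the parametric moduli space $\mathcal{M}_p$ of triples $(t,u,z_0)$ with $u$ a $J_t$-holomorphic sphere in class $A$ satisfying $u(z_0)=p$, modulo $\operatorname{PSL}(2,\mathbb{C})$. Because $A$ is primitive in $H_2(\mathbb{CP}^1\times M)$, every such $u$ is somewhere injective, so standard transversality allows a generic perturbation of the path under which $\mathcal{M}_p$ is a smooth one-manifold with boundary, and the boundary at $t=0$ is the single transverse point described above. Applying the compactness theorem proved in this article, $\mathcal{M}_p$ is compact, so the classification of compact one-manifolds forces $|\partial\mathcal{M}_p|$ to be even, whence the $t=1$ boundary is nonempty. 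This yields the desired sphere for a generic $J$. For an arbitrary $J$, I would approximate by a sequence of generic $J_k\to J$, extract spheres for each $J_k$, and again invoke the paper's compactness theorem to pass to a (possibly nodal) $J$-holomorphic limit; the minimality of $\omega(A)$ together with $\pi_2(M)=0$ forces all other components of any such limit to be ghost bubbles, so the unique non-constant principal component is a $J$-holomorphic sphere in class $A$ passing through $p$.

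The main obstacle is clearly the compactness step: without it the cobordism argument collapses and the limiting procedure for non-generic $J$ also fails. The classical treatment handles this via bubbling analysis together with Gromov's removable singularity theorem; the contribution announced in the introduction is to replace that machinery with an argument that exploits the minimality of $\omega(A)$, reparametrizes the curves so that the symplectic energy is evenly distributed, and then extracts the uniform gradient bound from either the mean value inequality for pseudo-holomorphic curves or the Gromov--Schwarz lemma.
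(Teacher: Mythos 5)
Your proposal is correct and takes essentially the same route as the paper: establish the base case at the split structure $i\oplus J_M$, propagate through a parametric moduli space using the paper's compactness theorem, and approximate for a general $J$. Two minor remarks. First, you phrase the cobordism step via the pointed one-manifold $\mathcal{M}_p=\operatorname{ev}^{-1}_{\{J_t\}}(p)$ and the parity of its boundary, whereas the paper (Lemma \ref{split0} and Lemma \ref{split2}) uses the equivalent mod-$2$ degree of the evaluation map; these are interchangeable presentations. Second, in your final approximation step you invoke the compactness theorem yet still allow for ``(possibly nodal)'' limits and then argue the extra components are ghost bubbles. This is redundant and slightly at odds with what Theorem \ref{compact1} asserts: that theorem says the moduli space is compact in the $C^\infty$-topology of the quotient of $[0,1]\times C^\infty(\mathbb{CP}^1,\mathbb{CP}^1\times M)$, i.e.\ the limit is already a smooth $J$-holomorphic sphere in the class $[\mathbb{CP}^1\times\{\mathrm{pt}\}]$; no post-hoc exclusion of bubbling is required. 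Indeed, eliminating the need for a bubbling/nodal analysis is exactly the point of the paper's compactness argument, so your final sentences can simply be dropped.
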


Given Theorem \ref{ab}, let us prove Theorem \ref{nons}.
\begin{proof}[Proof of Theorem \ref{nons}]
	Suppose there exists a symplectic embedding
	\[\psi: ( B^{2n}(r), \omega_{\mathrm{std}})\to (\mathbb{CP}^1\times M, \sigma\oplus \omega).\] 
For each $\epsilon\in (0,r)$,  $\psi$ restricts to a symplectic embedding of the closed ball
	\[\psi: ( \bar{B}^{2n}(r-\epsilon), \omega_{\mathrm{std}})\to (\mathbb{CP}^1\times M, \sigma\oplus \omega).\] 

By Proposition \ref{exist}, choose an $(\sigma\oplus \omega)$-compatible almost complex structure $J_\epsilon$ on $\mathbb{CP}^1\times M$ that agrees with $\psi_{*}J_{\mathrm{std}}$ on $\psi(  \bar{B}^{2n}(r-\epsilon))$, where $\psi_{*}J_{\mathrm{std}}$ is the push forward of the standard complex structure $J_{\mathrm{std}}$ on $ \bar{B}^{2n}(r-\epsilon)$. By Theorem \ref{ab}, there exists a $J_\epsilon$-holomorphic sphere $u_\epsilon :(\mathbb{CP}^1,i)\to \mathbb{CP}^1\times M$ in the homology class $[\mathbb{CP}^1\times \{\operatorname{pt}\}]$ passing through $\psi(0)$. Note that 
\begin{equation}\label{estimate}
\int_{\mathbb{CP}^1} u_\epsilon^{*}(\sigma\oplus \omega)=\langle [u_\epsilon],\sigma\oplus \omega\rangle=\langle [\mathbb{CP}^1\times \{\operatorname{pt}\}],\sigma\rangle=\int_{\mathbb{CP}^1}\sigma.
\end{equation}

The image of $u_\epsilon$ is not contained in $\psi(  B^{2n}(r))$: if it is, then 
by Stokes' theorem we have
\[\int_{\mathbb{CP}^1} u_\epsilon^{*}(\sigma\oplus \omega)=\int_{\mathbb{CP}^1} u_\epsilon^{*}\psi_{*} \omega_{\mathrm{std}}=\int_{\mathbb{CP}^1} u_\epsilon^{*}\psi_{*} (d\lambda_{\mathrm{std}})=\int_{\mathbb{CP}^1} d(u_\epsilon^{*}\psi_{*} \lambda_{\mathrm{std}})=0.\]
By Corollary \ref{comparison0}, $u_\epsilon$ is constant. This is a contradiction to $[u_\epsilon]=[\mathbb{CP}^1\times \{\operatorname{pt}\}]$. Thus,
\[\psi^{-1}\circ u_\epsilon: u_\epsilon^{-1}(\psi( \bar{B}^{2n}(r-\epsilon)))\to  \bar{B}^{2n}(r-\epsilon)\]
 is a  $J_{\mathrm{std}}$-holomorphic curve with boundary mapping to $\partial \bar{B}^{2n}(r-\epsilon)$ and passing through the center of $\bar{B}^{2n}(r-\epsilon)$. %By Sard's theorem, for almost every $\epsilon\in (0,r)$,
 %\[(\psi^{-1}\circ u_\epsilon)^{-1}( \bar{B}^{2n}(r-\epsilon))=u_\epsilon^{-1}(\psi( \bar{B}^{2n}(r-\epsilon)))\subset \mathbb{CP}^1\]
 %is a submanifold with a smooth boundary.
  By Lemma \ref{monoi}, we have
	\[r-\epsilon\leq \int_{u_\epsilon^{-1}(\psi( \bar{B}^{2n}(r-\epsilon)))}(\psi^{-1}\circ u_\epsilon)^*\omega_{\mathrm{std}}=\int_{u_\epsilon^{-1}(\psi( \bar{B}^{2n}(r-\epsilon)))}u_\epsilon^*(\sigma\oplus \omega)\leq \int_{\mathbb{CP}^1}u_\epsilon^*(\sigma\oplus \omega).\]
From (\ref{estimate}), it follows that
\[r-\epsilon\leq \int_{\mathbb{CP}^1}\sigma.\]
Since $\epsilon>0$ is arbitrary, we have $r\leq \int_{\mathbb{CP}^1}\sigma$.
\end{proof}
The  proof above uses the existence of
a pseudo-holomorphic curve to give the symplectic embedding obstruction $r\leq \int_{\mathbb{CP}^1}\sigma$.  Pseudo-holomorphic curves are currently the most important tool for dealing with symplectic embedding problems. A principle of Eliashberg  {\cite[p. 169]{Schlenk2018}} states that a pseudo-holomorphic curve can describe any obstruction to a symplectic embedding.

\begin{corollary}[cf. {\cite[Theorem 1]{Abbondandolo2015}}]\label{nons1} 
Let $S$ be any symplectic $2$-plane in $(\mathbb{R}^{2n}, \omega_{\mathrm{std}}:=\sum_{1}^{n}dx_i\wedge dy_i)$, i.e., a $2$-plane on which $\omega_{\mathrm{std}}$ does not vanish. Let $\pi_S:\mathbb{R}^{2n}\to S$ be the projection along the symplectic orthogonal complement $S^{\bot \omega_{\mathrm{std}}}$ of $S$. Let $\operatorname{Area}_{\omega_{\mathrm{std}}}$ denote the area on $S$ induced by ${\omega_{\mathrm{std}}}$. For any symplectic embedding
	 \[\psi: (B^{2n}(r), \omega_{\mathrm{std}})\to (\mathbb{R}^{2n},\omega_{\mathrm{std}})\]
we have
	\[\operatorname{Area}_{\omega_{\mathrm{std}}}(\pi_S(\psi(B^{2n}(r)))) \geq r,\] i.e.,  the shadow of any symplectic image of the ball $ B^{2n}(r)$ on any symplectic plane in $\mathbb{R}^{2n}$ is at least as large as the shadow of $ B^{2n}(r)$.
\end{corollary}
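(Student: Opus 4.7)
The plan is to reduce to Theorem \ref{nons} by constructing, for each $\eta>0$, a symplectic embedding of the slightly smaller ball $\bar B^{2n}(r-\eta)$ into a closed product $(\mathbb{CP}^1\times T^{2n-2},\,\sigma\oplus\omega_T)$ whose $\mathbb{CP}^1$-area is only slightly larger than the shadow area $\operatorname{Area}_{\omega_{\mathrm{std}}}(\pi_S(\psi(B^{2n}(r))))$. A flat torus $T^{2n-2}$ has $\pi_2(T^{2n-2})=0$, so Theorem \ref{nons} applies and yields the required estimate after letting $\eta\to 0$.

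First, by a linear symplectic change of coordinates on $(\mathbb{R}^{2n},\omega_{\mathrm{std}})$ I would reduce to the case where $S$ is the $(x_1,y_1)$-plane and $\pi_S$ is projection onto the first two coordinates. Set $K:=\pi_S(\psi(B^{2n}(r)))$ and $A:=\operatorname{Area}_{\omega_{\mathrm{std}}}(K)$; note that $K$ is open (and connected), because $\psi$ is an open embedding and $\pi_S$ is an open map. Fix $\eta>0$ and consider the compact set $\psi(\bar B^{2n}(r-\eta))$, which lies in $K'\times B^{2n-2}(L)$ for some large $L$, where $K':=\pi_S(\psi(\bar B^{2n}(r-\eta)))$ is compact with $\operatorname{Area}(K')\le A$. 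Embedding $B^{2n-2}(L)$ into a flat torus $T^{2n-2}=\mathbb{R}^{2n-2}/N\mathbb{Z}^{2n-2}$ (with $N$ so large that a fundamental cube contains $B^{2n-2}(L)$) is routine and symplectic. The remaining task is to embed $K'$ (or a neighborhood of it) symplectically into an open disk $B^2(A+2\eta)$ and then to embed $B^2(A+2\eta)$ into $(\mathbb{CP}^1,\sigma)$ with $\int_{\mathbb{CP}^1}\sigma=A+3\eta$. Composing with $\psi$ yields a symplectic embedding $\bar B^{2n}(r-\eta)\hookrightarrow\mathbb{CP}^1\times T^{2n-2}$, and Theorem \ref{nons} then gives $r-\eta\le A+3\eta$, so $r\le A$ after sending $\eta\to 0$.

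The hard part will be the two-dimensional packing step. My plan is to use outer regularity of Lebesgue measure to choose a bounded, smoothly bounded, connected open set $V\supset K'$ with $\operatorname{Area}(V)<A+\eta$. Since $\partial V$ is a compact smooth $1$-manifold, $\bar V$ is a compact planar surface of genus $0$ with some finite number $b$ of boundary circles. Next I would pick disjoint closed disks $\bar D_1,\dots,\bar D_{b-1}\subset B^2(A+2\eta)$ whose areas sum to $(A+2\eta)-\operatorname{Area}(V)>\eta$; then $B^2(A+2\eta)\setminus\bigsqcup_i \bar D_i$ is a planar domain of the same topological type and the same total area as $V$, so by Moser's theorem (for compact surfaces with boundary) the two are symplectomorphic. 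This produces the desired symplectic embedding $V\hookrightarrow B^2(A+2\eta)$. The final step $B^2(A+2\eta)\hookrightarrow(\mathbb{CP}^1,\sigma)$ is standard: equip $\mathbb{CP}^1$ with an area form that, in an affine chart, agrees with $\omega_{\mathrm{std}}$ on a disk of area $A+2\eta$ and has total integral $A+3\eta$.
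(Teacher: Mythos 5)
Your proposal takes essentially the same route as the paper's proof: reduce to an area-preserving map of the two-dimensional shadow into a disk whose area only slightly exceeds $\operatorname{Area}_{\omega_{\mathrm{std}}}(\pi_S(\psi(B^{2n}(r))))$, then invoke non-squeezing via the torus compactification used for Theorem \ref{nons}. The paper's own proof of Corollary \ref{nons1} is shorter only because it treats the existence of the area-preserving map $\phi_S$ on the shadow as a known fact and then cites Theorem \ref{gromov-nonsqueezing}; you instead construct this two-dimensional map explicitly by a Moser argument and feed the resulting embedding directly into Theorem \ref{nons}, so the content is the same, with the two-dimensional packing step spelled out rather than asserted.

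There is, however, one genuine gap in the Moser step as you have written it. When the smoothed outer approximation $\bar V$ has a single boundary circle, i.e.\ $b=1$, you must remove $b-1=0$ disks of total area $(A+2\eta)-\operatorname{Area}(V)>\eta>0$ from $B^2(A+2\eta)$, which is impossible; the two domains then have different total areas and the Moser argument does not apply. This case must be handled separately: for $b=1$, $\bar V$ is a compact disk of area $<A+2\eta$, and Moser (or Riemann mapping followed by Moser) makes it symplectomorphic to a round disk of the same area, which sits inside $B^2(A+2\eta)$. You should also arrange $V=\{f<c\}$ for a smooth $f$ and a regular value $c$, so that $\bar V$ is genuinely a compact surface with boundary with $V$ lying on one side of each component of $\partial V$; outer regularity plus an unspecified smoothing does not by itself guarantee this. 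With these repairs the argument is correct.
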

Corollary \ref{nons1} is equivalent to Theorem \ref{gromov-nonsqueezing}. This will become apparent after the next two proofs.
%\begin{remark}
	%The corollary says that symplectic embeddings $I: ( \mathbb{B}_r^{2n}(0), \omega_{std})\to (\mathbb{R}^{2n},\omega_{std})$ dont squeeze the ball $ \mathbb{B}_r^{2n}(0)$ along symplectic plans in $(\mathbb{R}^{2n},\omega_{std})$. In particular, no Hamiltonian flow can flow the ball $ \mathbb{B}_r^{2n}(0)$ into the the cylinder $ \mathbb{D}_R^2\times \mathbb{R}^{2n-2}$ if $r>R$.
%\end{remark}
\begin{proof}[Proof of Theorem \ref{gromov-nonsqueezing}]
We show that if there exists a symplectic embedding 
	\[\psi: ( B^{2n}(r), \omega_{\mathrm{std}})\to ( B^2(R)\times \mathbb{R}^{2n-2},\omega_{\mathrm{std}}),\]
then $r\leq R$. 

Suppose such an embedding exists. For each $\epsilon\in (0,r)$, this  embedding restricts to a symplectic embedding
	\[\psi: ( \bar{B}^{2n}(r-\epsilon), \omega_{\mathrm{std}})\to ( B^2(R)\times \mathbb{R}^{2n-2},\omega_{\mathrm{std}}).\]
	The image $\psi(\bar{B}^{2n}(r-\epsilon))$ is compact. Choose $l>0$ large so that $ B^2(R)\times [-l,l]^{2n-2}$ contains $\psi(\bar{B}^{2n}(r-\epsilon))$ in its interior. Since $\omega_{\mathrm{std}}$ is translation invariant, it descends to a symplectic form on the quotient $T^{2n-2}:=\mathbb{R}^{2n-2}/ 2l\mathbb{Z}^{2n-2}$ through the canonical projection $\pi:\mathbb{R}^{2n-2}\to T^{2n-2}$. Therefore, we get a symplectic embedding 
	\[  \bar{B}^{2n}(r-\epsilon)\xrightarrow[]{\psi}  B^2(R)\times \mathbb{R}^{2n-2} \xrightarrow[]{\text{Id}\times \pi}  B^2(R)\times T^{2n-2}.\]
	
	Give $\mathbb{CP}^1$ an area form $\sigma$ of total area $(R+\epsilon)$ and embed $B^2(R)$ into $\mathbb{CP}^1$ symplectically. Such an embedding exists because volume-preserving and symplectic embeddings are the same in dimension $2$. 
	Finally, we get a symplectic embedding 
	\[( B^{2n}(r), \omega_{\mathrm{std}})\to (\mathbb{CP}^1\times T^{2n-2},  \sigma \oplus \omega_{\mathrm{std}}).\]
	Since $\pi_2(T^{2n-2})=0$, Theorem \ref{nons} implies 
	\[r-\epsilon\leq \int_{\mathbb{CP}^1}\sigma=R+\epsilon.\]
Since $\epsilon$ is arbitrary, $\epsilon\to 0$ implies 
\[r\leq R.\qedhere\] 
\end{proof}
\begin{proof}[Proof of Corollary \ref{nons1}]	
On the contrary, suppose there exists a symplectic embedding
	 \[\psi: (B^{2n}(r), \omega_{\mathrm{std}})\to (\mathbb{R}^{2n},\omega_{\mathrm{std}})\]
such that 
\[\operatorname{Area}_{\omega_{\mathrm{std}}}(\pi_S(\psi(B^{2n}(r))))< r.\]
One can map $\pi_S(\psi(B^{2n}(r)))$ to a subset of a ball of capacity $R<r$ in $S$ by an area-preserving diffeomorphism $\phi_S$. The symplectomorphism $\phi_S\times \operatorname{Id}_{S^{\bot \omega_{\mathrm{std}}}}\circ \psi $ maps $B^{2n}(r)$ into $B^2(R)\times \mathbb{R}^{2n-2}$ with $R<r$. This is a contradiction to Theorem \ref{gromov-nonsqueezing}.
\end{proof}
It is clear from above that Theorem $\ref{ab}$ plays a central role in Gromov's non-squeezing theorem. To prove it, we start with an $(\delta\oplus \omega)$-compatible almost complex structure $J_0$ on $\mathbb{CP}^1\times M$ for which we can explicitly write down all $J_0$-holomorphic spheres representing the homology class $[\mathbb{CP}^1\times \{\operatorname{pt}\}]$ and passing through $p$. We show that the count of $J_0$-holomorphic spheres representing $[\mathbb{CP}^1\times \{\operatorname{pt}\}]$ and passing through $p$ is non-zero (cf. Lemma \ref{split0}). Then, for any $\omega_{\mathrm{FS}}\oplus \omega$-compatible almost complex structure $J$ on $\mathbb{CP}^1\times M$, we construct a sequence of almost complex structures $J_k$ that converges to $J$ such that for each $k$, a $J_k$-holomorphic sphere representing $[\mathbb{CP}^1\times \{\operatorname{pt}\}]$ and passing through $p$ exists (cf. Lemma \ref{split2}). The existence for the given $J$ then follows as a consequence of the compactness (cf. Theorem \ref{compact1}) of the following moduli space.
\begin{definition}
Let $(M, \omega)$ be a closed symplectic manifold of dimension $(2n-2)\geq 2$ with vanishing second homotopy group, i.e., $\pi_2 (M)=0$. Let $\omega_{\mathrm{FS}}$ denote the Fubini-Study form on $\mathbb{CP}^1$. Let $\{J_t\}_{t\in [0,1]}\subset \mathcal{J}_c(\mathbb{CP}^1\times M,\omega_{\mathrm{FS}}\oplus \omega)$ be a continous path of $(\omega_{\mathrm{FS}}\oplus \omega)$-compatible almost complex structures.  We define
\begin{equation}\label{moduli}
	\mathcal{M}(\{J_t\}_{t\in [0,1]},[\mathbb{CP}^1\times\{\operatorname{pt}\}]):=\left\{(t,u):
	\begin{array}{l}
		t\in [0,1],\\
		u:(\mathbb{CP}^1,i)\to (\mathbb{CP}^1\times M,J_t),\\
		du\circ i=J_t\circ du ,\\
		u_*[\mathbb{CP}^1]=[\mathbb{CP}^1\times\{\operatorname{pt}\}].
	\end{array}
	\right\}\bigg/\sim
\end{equation}
where $u_1 \sim u_2$ if and only if $u_1=u_2\circ\varphi$ for some $\varphi \in \operatorname{Aut}(\mathbb{CP}^1,i)$.
\end{definition}
\begin{theorem}[cf. {\cite[Theorem 2.4]{Beckschulte_2021}}]\label{compact1}
The moduli space defined by (\ref{moduli}) is compact in the quotient topology coming from  $[0,1]\times C^\infty(\mathbb{CP}^1,\mathbb{CP}^1\times M)$.
\end{theorem}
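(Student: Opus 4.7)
The plan is to extract a $C^\infty$-convergent subsequence from an arbitrary sequence $(t_k, u_k)$ in the moduli space. First, I would pass to a subsequence so that $t_k \to t_\infty \in [0,1]$, which by continuity of the path $\{J_t\}$ gives $J_{t_k} \to J_{t_\infty}$ in $C^\infty$. Every $u_k$ has symplectic area equal to $A := \int_{\mathbb{CP}^1} \omega_{\mathrm{FS}}$, and this is the minimum positive symplectic area of any $J$-holomorphic sphere in $(\mathbb{CP}^1 \times M, \omega_{\mathrm{FS}} \oplus \omega)$: the hypothesis $\pi_2(M) = 0$, combined with $\pi_2(\mathbb{CP}^1) = \mathbb{Z}$, forces every homology class realized by a continuous sphere in $\mathbb{CP}^1 \times M$ to be an integer multiple of $[\mathbb{CP}^1 \times \{\operatorname{pt}\}]$, and positivity of symplectic area for $\omega$-compatible $J$ means that any non-constant $J$-holomorphic sphere is a strictly positive multiple.

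The heart of the argument is a reparametrization step. Since the moduli space is a quotient by $\operatorname{Aut}(\mathbb{CP}^1, i) = \mathrm{PSL}(2, \mathbb{C})$, I am free to replace $u_k$ by $u_k \circ \phi_k$ for any Möbius $\phi_k$. The goal is to choose $\phi_k$ so that $\tilde{u}_k := u_k \circ \phi_k$ distributes its symplectic area evenly across $\mathbb{CP}^1$: find, independently of $k$, a radius $r_0 > 0$ and a constant $\eta > 0$ such that for every $p \in \mathbb{CP}^1$,
\[\int_{B(p, r_0)} \tilde{u}_k^*(\omega_{\mathrm{FS}} \oplus \omega) \leq A - \eta,\]
where $B(p, r_0)$ denotes a spherical disk of radius $r_0$. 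Since the pullback of the area form by each $J_{t_k}$-holomorphic sphere is a smooth non-atomic measure of total mass $A$, a three-point normalization by Möbius transformations should suffice: pick $\phi_k$ so that each of three standard disjoint caps on $\mathbb{CP}^1$ receives a definite fraction of the total area, from which the displayed non-concentration estimate follows.

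With this evenly-spread-area condition in hand, I would apply either the mean value inequality for pseudo-holomorphic curves or the Gromov-Schwarz lemma to obtain a uniform pointwise bound on $|d\tilde{u}_k|$ over all of $\mathbb{CP}^1$. The underlying mechanism is that the minimum-area property, combined with the uniform non-concentration estimate, prevents any small sub-ball from collecting the full energy of a bubble: passing to sufficiently small sub-balls drops the energy below the universal $\hbar$ threshold for the mean value inequality, or, alternatively, confines the image to a small target region in which the Gromov-Schwarz lemma applies. Either way, the gradients of $\tilde{u}_k$ are controlled uniformly in $k$, bypassing the usual bubble-tree analysis and the appeal to Gromov's removable singularity theorem.

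Once $\|d\tilde{u}_k\|_{C^0}$ is uniformly bounded, elliptic regularity for the Cauchy-Riemann equation $du \circ i = J_{t_k} \circ du$, together with $J_{t_k} \to J_{t_\infty}$ in $C^\infty$, yields uniform $C^\infty$-bounds on $\tilde{u}_k$. Ascoli-Arzelà then produces a subsequence converging in $C^\infty$ to a $J_{t_\infty}$-holomorphic sphere $\tilde{u}_\infty$, which by homotopy invariance still represents $[\mathbb{CP}^1 \times \{\operatorname{pt}\}]$. The main obstacle, and the genuinely novel content of the argument, is the reparametrization step: showing that a Möbius representative of each $[u_k]$ can always be chosen so that no small spherical disk captures a near-total share of the symplectic area, with constants $r_0$ and $\eta$ uniform across the sequence. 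This is precisely where the minimum-area hypothesis on the class $[\mathbb{CP}^1 \times \{\operatorname{pt}\}]$ enters in an essential way.
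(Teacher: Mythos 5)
Your high-level outline matches the paper's argument closely: pass to a convergent subsequence of $t_k$, exploit the minimal-positive-area property coming from $\pi_2(M)=0$, reparametrize by M\"obius transformations to distribute the symplectic area evenly, apply the mean value inequality or the Gromov--Schwarz lemma, then bootstrap via elliptic regularity and Arzel\`a--Ascoli. The reparametrization scheme you sketch (a three-parameter M\"obius normalization balancing the area among a collection of caps) is in substance what the paper does with $g_1,g_2,g_3$ and six hemispheres.

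However, there is a genuine gap in the passage from the non-concentration estimate to the gradient bound. You assert that ``passing to sufficiently small sub-balls drops the energy below the universal $\hbar$ threshold'' and that the mean value inequality or Gromov--Schwarz lemma then applies — but this does not follow from the non-concentration estimate alone. Your estimate $\int_{B(p,r_0)}\tilde u_k^*(\omega_{\mathrm{FS}}\oplus\omega)\le A-\eta$ holds at a \emph{fixed} radius $r_0$; it gives no $k$-uniform control on how the energy in $B(p,r)$ decays as $r\to 0$, so there is no a priori radius, uniform in $k$, at which the energy is below $c_{J,g}$ or at which the image is $\epsilon$-small. The paper's argument fills exactly this hole: an isoperimetric-type length-energy inequality on annuli produces, for each curve, a radius $r_v\in(\pi/24c,\pi/24)$ where the boundary loop $v(\partial B_{\mathrm{FS}}(z,r_v))$ has length $\lesssim 1/\sqrt{\log c}$; one then caps this short loop by an exponential disk of area $\lesssim 1/\log c$, and the area-quantization $E\in\pi\mathbb{Z}$ (used here a second time, not just to fix the total area) forces the energy of $v$ inside $B_{\mathrm{FS}}(z,r_v)$ to equal the capping area up to sign and hence to be $O(1/\log c)$. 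Since $c$ is a free parameter independent of $v$, choosing it large enough pushes this energy below the threshold. This quantitative small-annulus-then-cap-and-quantize step is the technical heart of the proof and must be supplied; the reparametrization alone (which you identify as the main obstacle) is actually the easier part.
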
 
Section \ref{outline} below outlines our proof of Theorem \ref{compact1}. A detailed proof is given in Section \ref{mainproof}.

\subsection{Outline of the proof of Theorem \ref{compact1} via mean value inequality}\label{outline}
We briefly explain our proof of Theorem \ref{compact1} that is based on the mean value inequality for pseudo-holomorphic curves described in Theorem \ref{meanvaluek}. Let $g$ be a Riemannian metric on $\mathbb{CP}^1\times M$ and $J$ be an  $(\omega_{\mathrm{FS}}\oplus\omega)$-compatible almost complex structure on $\mathbb{CP}^1\times M$. Consider the moduli space
	\begin{equation*}
	\mathcal{M}(J,[\mathbb{CP}^1\times\{\operatorname{pt}\}]):=\left\{
	\begin{array}{l}
		u:(\mathbb{CP}^1,i)\to (\mathbb{CP}^1\times M,J),\\
		du\circ i=J\circ du ,\\
		u_*[\mathbb{CP}^1]=[\mathbb{CP}^1\times\{\operatorname{pt}\}]\in H_2(\mathbb{CP}^1\times M,\mathbb{Z}).
\end{array}
	\right\}\bigg/\sim
\end{equation*}
where $u_1 \sim u_2$ if and only if $u_1=u_2\circ\varphi$ for some $\varphi \in \operatorname{Aut}(\mathbb{CP}^1,i)$. We show that each $[u]\in \mathcal{M}(J,[\mathbb{CP}^1\times\{\operatorname{pt}\}])$ admits a representative $v$ such that
\begin{equation}\label{C1bound}
\|dv(z)\|_{g}\leq C_{J,g},
\end{equation}
for all $z\in \mathbb{CP}^1$, and some constant $C_{J,g}>0$ that only depends on $(g,J)$. Moreover, the constant $C_{J,g}$ is continuous with respect to $J$ and $g$ in the $C^\infty$-topology.

This is enough to conclude Theorem \ref{compact1}. To see this, let $\{J_t\}_{t\in [0,1]}\subset \mathcal{J}_c(\mathbb{CP}^1\times M,\omega_{\mathrm{FS}}\oplus \omega)$ be a continuous path of $(\omega_{\mathrm{FS}}\oplus \omega)$-compatible almost complex structures. For each $t\in [0,1]$, by (\ref{C1bound}), there exists $C_{J_t,g}>0$ such that  every $[u]\in \mathcal{M}(J_t,[\mathbb{CP}^1\times\{\operatorname{pt}\}])$ admits a representative $v$ such that for all $z\in \mathbb{CP}^1$ we have
\[\|dv(z)\|_{g}\leq C_{J_t,g}.\]
The constant $C_{J_t,g}>0$ only depends on $(g,J_t)$ and varies continuously with $t\in [0,1]$. Since the interval $[0,1]$ is compact, we can choose $C_{J_t,g}$ to be uniform in $t$. 

The topology on the moduli space in Theorem \ref{compact1} is metrizable as a special case of {\cite[Theorem 5.6.6(ii)]{MacDuff}}. So compactness, in this case, is equivalent to sequential compactness. Given a sequence $\{[u_k]\}$ in the moduli space in Theorem \ref{compact1}, there exist a sequence $\{t_k\}$ in $[0,1]$ and a corresponding sequence  $\{J_{t_k}\}$ in $ \{J_t\}_{t\in [0,1]}$ such that $u_k$ is $J_{t_k}$-holomorphic. Since $[0,1]$ is compact, $\{t_k\}$ has a subsequence, still denoted by $\{t_k\}$, that converges to some $t_{\mathrm{lim}} \in [0,1]$. This implies the sequence $\{J_{t_k}\}$ $C^\infty$-converges to $J_{t_{\mathrm{lim}}}\in \{J_t\}_{t\in [0,1]}$ 
because the family $\{J_t\}_{t\in [0,1]}$ is continuous in $C^\infty$-topology. Moreover, $\{u_k\}$ has a uniform $C^0$-bound because the target manifold $\mathbb{CP}^1\times M$ is closed. Also, by the above discussion,  there exists $C>0$ such that (after re-parametrizing  $u_k$) we have
\[\|du_k(z)\|_{g}\leq C,\]
for all $z\in \mathbb{CP}^1$, $k \in \mathbb{Z}_{\geq 1}$. This $C^1$-bound implies a $C^\infty$-bound on the sequence $\{u_k\}$ by {\cite[Sec. 2.2.3]{Casim2014}}.  By Arzel\'a-Ascoli theorem, $u_k$ has a subsequence that $C^\infty$-converges to a $J_{t_{\mathrm{lim}}}$-holomorphic map $u:\mathbb{CP}^1\to \mathbb{CP}^1\times M$. Using $C^0$-convergence, the limit $u$ represents the class $[\mathbb{CP}^1\times\{\operatorname{pt}\}]$.
 %We establish that the sequence $u_k$ admits a uniform $C^1$-bound after after re-scaling. For any $z\in \mathbb{CP}^1$, there exists a Fubini-Study ball $B_{FS}(z,r_{u_k})$ centered at $z$ whose radius $r_{u_k}$  is universally bounded from below on which $u_k:(\mathbb{CP}^1,i)\to (\mathbb{CP}^1\times M,J)$ after re-scaling has small enough symplectic area. The mean value theorem, Theorem \ref{meanvaluek}, then gives a bound on $\|du_k(z)\|$ in terms of the total energy of $u_k$. Since the total energy of $u_k$ is uniformly bounded (equal to $\pi$) we get a uniform bound on $\|du_k(z)\|$ that does not depend on $z\in \mathbb{CP}^1$. We give a sketch below.
 Below we outline a proof of ($\ref{C1bound}$). A detailed proof is given in Section \ref{mainproof}.
 \begin{itemize}
\item[\textbf{Step 01}]  For any smooth map $u:\mathbb{CP}^1\to \mathbb{CP}^1\times M$, we have
\[E(u):=\int_{\mathbb{CP}^1} u^{*}(\omega_{\mathrm{FS}}\oplus \omega)=m\pi\]
for some integer $m$ depending on $u$. This means that any smooth map $u:\mathbb{CP}^1\to \mathbb{CP}^1\times M$ with symplectic area less than $\pi$ and greater than $-\pi$ must have zero symplectic area. If $u$ is not constant and is $J$-holomorphic for some $\omega_{\mathrm{FS}}\oplus\omega$-compatible almost complex structure $J$, then  $m>0$ because $E(u)>0$ by Corollary \ref{comparison0}. Moreover, $m=1$ if $u$ represents the class $[\mathbb{CP}^1\times \{\operatorname{pt}\}]$. The conclusion is that $J$-holomorphic spheres in $\mathcal{M}(J,[\mathbb{CP}^1\times\{\operatorname{pt}\}])$ have the minimal positive symplectic area (namely $\pi$) for any $(\omega_{\mathrm{FS}}\oplus \omega)$-compatible almost complex structure $J$.

\item[\textbf{Step 02}] Consider $g_1$, $g_2$ and $g_3$ $\in \operatorname{Aut}(\mathbb{CP}^1,i)$  given by
\[ \left\{
\begin{array}{ll}
	g_1(z)=\lambda_{1} z,&\\
	g_2(z)=\frac{z+\lambda_{2}}{z\lambda_2+1},&\\
	g_3(z)=\frac{z+\lambda_{3}}{-\lambda_{3}z+1},&\\
	
\end{array}
\right. \]
for $\lambda_1, \lambda_2, \lambda_3 \in \mathbb{C}$. For each $u\in \mathcal{M}(J,[\mathbb{CP}^1\times\{\operatorname{pt}\}])$, choose $\lambda_1, \lambda_2$ purely real and $\lambda_3$ purely imaginary such that $v:=u \circ g_1 \circ g_2 \circ g_3$ has the symplectic area distribution
\[ \left\{
\begin{array}{ll}
	E(v|_{\mathbb{D}^2})=\pi/2,&\\
	E(v|_{\operatorname{Re}(z)\geq 0})=\pi/2,&\\
	E(v|_{\operatorname{Imag}(z)\geq 0})=\pi/2,&\\
\end{array}
\right. \]
where $\mathbb{D}^2$ is the unit disk centered at the origin in $\mathbb{C}$ corresponding to the lower hemisphere on $\mathbb{CP}^1$ under the stereographic projection.

\item [\textbf{Step 03}] For $z\in \mathbb{CP}^1$, denote the Fubini-Study disk of radius $\pi/24$ centered at $z$ by $B_\mathrm{FS}(z,\pi/24)$. Let $\operatorname{injrad}(\mathbb{CP}^1\times M,g_0)$ denote the injectivity radius of $\mathbb{CP}^1\times M$ with respect to the Riemannian metric $g_0:=(\omega_{\mathrm{FS}}\oplus \omega)(\cdot,J\cdot)$. There is a constant $k>0$ that depends only on $g_0$ and varies continuously with respect to $g_0$ in $C^\infty$-topology such that the following holds: for any $c\geq \max\{e^{4k \pi}, e^{\frac{18\pi^2}{\operatorname{injrad}(\mathbb{CP}^1\times M,g_0)^2}} \}$ we have 
\begin{equation}\label{cut12}
	\int_{B_{\mathrm{FS}}(z,r_v)}v^{*}(\omega_{\mathrm{FS}}\oplus\omega)\leq k \frac{2\pi^2}{\log(c)}.
\end{equation}
for some $r_v\in (\frac{\pi}{24c},\frac{\pi}{24})$ that depends on the map $v$. Here $c>1$ is arbitrary and does not depend on $v$. To obtain the estimate (\ref{cut12}), we use the fact that $v$ has minimal positive symplectic area, by Step 01,  and has the symplectic area distribution obtained in Step 02 by a suitable rescaling.

\item[\textbf{Step 04}] Let $c_{J,g_0}>0$ be the positive constant in Theorem \ref{meanvaluek}. Choose 
\[c=\max\{e^{4k \pi},e^{\frac{18\pi^2}{\operatorname{injrad}(\mathbb{CP}^1\times M,g_0)^2}} , e^{2k\pi^2c^{-1}_{J,g_0}}\}\]
 in (\ref{cut12}). By Corollary \ref{comparison0}, we have
\[\int_{B_{\mathrm{FS}}(z,r_v)}\|dv\|_{g_0}^2=\int_{B_{\mathrm{FS}}(z,r_v)}v^{*}(\omega_{\mathrm{FS}}\oplus\omega)<c_{J,g_0}.\]
By  Theorem \ref{meanvaluek}, we have 
\[\|dv(z)\|_{g_0}^2\leq \frac{16}{\pi r_v^2}\int_{B_{\mathrm{FS}}(z,r_v)}\|dv\|_{g_0}^2.\] Since $\int_{B_{\mathrm{FS}}(z,r_v)}\|dv\|_{g_0}^2\leq \pi$ and $r_v\in (\pi/24c,\pi/24)$, we have 
\[\|dv(z)\|_{g_0}\leq \frac{96c}{\pi}, \] 
for all $z\in \mathbb{CP}^1$. The constant $ c$ does not depend on $v$. 

Since $\mathbb{CP}^1\times M$ is compact, any Riemannian metric $g$ is comparable to $g_0:=(\omega_{\mathrm{FS}}\oplus\omega)(\cdot,J\cdot)$. So there exists $c_g>0$ such that 
\[\|\cdot\|_{g} \leq c_g\|\cdot\|_{g_0},\]
where $c_g$ varies continuously with $J$ and $g$ in the $C^\infty$-topology. Thus
\begin{equation}\label{conticonstant}
\|dv(z)\|_{g}\leq \frac{96c_gc}{\pi}:=C_{J,g},	
\end{equation}
for all $z\in \mathbb{CP}^1$. The constants $c_g$ and $c$ do not depend on $v$. 

The constant $k$ in 
\[c=\max\{e^{4k \pi},e^{\frac{18\pi^2}{\operatorname{injrad}(\mathbb{CP}^1\times M,g_0)}} , e^{2k\pi^2c^{-1}_{J,g_0}}\}\]
 varies continuously with the metric $g_0:=(\omega_{\mathrm{FS}}\oplus\omega)(\cdot,J\cdot)$, which in turn depends continuously on $J$ in the $C^\infty$-topology. By Theorem \ref{meanvaluek}, the constant $c_{J,g_0}>0$  depends continuously on $J$  in the $C^\infty$-topology. Therefore, the constant 
\[c=\max\{e^{4k \pi},e^{\frac{18\pi^2}{\operatorname{injrad}(\mathbb{CP}^1\times M,g_0)}} , e^{2k\pi^2c^{-1}_{J,g_0}}\}\]  varies continuously with $J$ in the $C^\infty$-topology.
The conclusion is that the constant 
\[C_{J,g}:=\frac{96c_gc}{\pi}\]
in (\ref{conticonstant}) varies continuously with $J$ and $g$ in  $C^\infty$-topology. This completes the outline of our proof.
\end{itemize} 
\subsection{Outline of the proof of Theorem \ref{compact1} via Gromov-Schwarz lemma}
Another approach to get a uniform $C^1$-bound on the moduli space in Theorem \ref{compact1} is to apply the monotonicity lemma, Lemma \ref{monochap1}, and the Gromov-Schwarz lemma, Lemma \ref{gschap1}, instead of mean value theorem for $J$-holomorphic curves as above. This argument goes as follows. We repeat the above steps until Step 03 to get 
\begin{equation}\label{cut122}
	\int_{B_{\mathrm{FS}}(z,r_v)}v^{*}(\omega_{\mathrm{FS}}\oplus\omega)\leq k \frac{2\pi^2}{\log(c)},
\end{equation}
 for any $c\geq \max\{e^{4k \pi}, e^{\frac{18\pi^2}{\operatorname{injrad}(\mathbb{CP}^1\times M,g_0)^2}} \}$  and some $r_v\in (\frac{\pi}{24c},\frac{\pi}{24})$ that depends on the map $v$. Recall that $c$ is arbitrary and does not depend on $v$.

 Let $\epsilon>0$ be the constant in Lemma \ref{gschap1}, and let $c_1$ and $c_2$ be the constants of Lemma \ref{monochap1} for the metric $g_0:=(\omega_{\mathrm{FS}}\oplus\omega)(\cdot,J\cdot)$. We prove that for 
\[c=\max\bigg\{e^{4k \pi}, e^{\frac{18\pi^2}{\operatorname{injrad}(\mathbb{CP}^1\times M,g_0)^2}}, e^{\frac{4k\pi^2}{c_1c_2^2}}, e^{\frac{8\pi^2}{\epsilon^2}\big(\sqrt{\frac{k}{c_1}}+1\big)^2}\bigg\}\]
the estimate (\ref{cut122}) and Lemma \ref{monochap1} imply the following: every $v$ admits some $r_v\in (\frac{\pi}{24c},\frac{\pi}{24})$ that depends on the map $v$ such that 
\[v(B_{\mathrm{FS}}(z,r_v))\subset B_{\varepsilon}(v(z)),\]
where $B_{\varepsilon}(v(z))$ denotes the ball of radius $\epsilon$ centered at $v(z)$ in $(\mathbb{CP}^1\times M, g_0)$. We then apply Lemma \ref{gschap1} to conclude that for all $z\in \mathbb{CP}^1$ we have 
\[\|dv(z)\|_{g_0}\leq C_{J,g_0}\]
for some constant $C_{J,g_0}>0$ that is continuous with respect to $J$ in $C^\infty$-topology and does not depend on $v$. For details, see Subsection \ref{proof-Gromov-Schwarz lemma}.
\section*{Acknowledgement}
The contents of this article are taken from my master's thesis at the Humboldt University of Berlin under the supervision of Klaus Mohnke. I wish to thank Klaus Mohnke for his guidance, Milica Dukic and Gorapada Bera for their useful comments which greatly improved the readability of this article. I received financial support from the Deutsche Forschungsgemeinschaft (DFG, German Research Foundation) under Germany's Excellence Strategy-The Berlin Mathematics Research Center MATH+ (EXC-2046/1, project ID: 390685689).

\section{Preliminaries}
\subsection{Symplectic manifolds}
\begin{definition}[Symplectic vector space]
	A symplectic vector space is a vector space $V$ together with a bi-linear $2$-form $\omega:V\times V\to \mathbb{R}$ which is  skew-symmetric and  non-degenerate, i.e., 
\begin{itemize}
	\item $\omega(v,w)=-\omega(w,v)$ for any two $v,w\in V$;
	\item for each $0\neq w\in V$, there exists $0\neq v\in V$ such that $\omega(w,v)\neq 0.$
\end{itemize} 
\end{definition}
\begin{definition}[Symplectic manifold]
	A symplectic manifold is a smooth manifold $X$ together a smooth differential $2$-form $\omega$ such that:
	\begin{itemize}	
		\item $(T_pX,\omega_p)$ is a symplectic vector space for every $p\in X$.
		\item $\omega$ is 
		de Rham closed, i.e., $d\omega=0$.
	\end{itemize}
\end{definition}
\begin{example}
Let $(x_1,y_1, \dots, x_n,y_n)$ be the coordinates on $\mathbb{R}^{2n}$. The $2$-form on $\mathbb{R}^{2n}$ defined by
	\[\omega_{\mathrm{std}}:=\sum_{i=1}^{n}dx_i\wedge dy_i\]
is a symplectic form. This is known as the standard symplectic form on $\mathbb{R}^{2n}$.
\end{example}
\begin{definition}[Symplectic embedding]
	Let $(X,\omega)$ and $(X',\omega')$ be two symplectic manifolds. A symplectic embedding of $(X,\omega)$ into $(X',\omega')$ is a smooth embedding $\psi:X\to X'$ such that $\psi^{*}\omega'=\omega$.
\end{definition}
\begin{definition}[Almost complex structure]
An almost complex structure on a smooth manifold $X$ is a map $X\ni p\to J_p:T_pX \to T_pX$ such that:
	\begin{itemize}	
		\item $J_p:T_pX \to T_pX$ is linear with $J_p^2:=J\circ J=-\operatorname{Id}$ for every  $p \in X$.
		\item For any smooth vector field $Y$ on $X$, $J(Y)$ is a smooth vector field on $X$.
	\end{itemize}
\end{definition}
\begin{definition}
	An almost complex manifold is a pair $(X,J)$, where $X$ is a smooth manifold and $J$ is an almost complex structure on $X$.
\end{definition}
\begin{definition}
A Riemann surface is an almost complex manifold of real dimension $2$. 
\end{definition}
Every almost complex structure on a $2$-dimensional manifold is integrable.
\begin{definition}
	Let $(X,\omega)$ be a symplectic manifold, and $J$ be an almost complex structure on $X$. We say $J$ is compatible with $\omega$ (or $J$ is $\omega$-compatible) if $\omega(\cdot,J\cdot)$ defines a Riemannian metric on $X$.
\end{definition}
The space of all almost complex structures on $X$ compatible with $\omega$ is denoted by $\mathcal{J}_c(X,\omega)$. The space $\mathcal{J}_c(X,\omega)$ is endowed with $C^{\infty}$-topology. It is well-known that $\mathcal{J}_c(X,\omega)$ is non-empty and contractible {\cite[Prop. 4.1.1]{MR3674984}.

\begin{example}
 Define 
	\[J_{\mathrm{std}}\bigg(\frac{\partial}{\partial x_i}\bigg):=\frac{\partial}{\partial y_i} \text{ and } J_{\mathrm{std}}\bigg(\frac{\partial}{\partial y_i}\bigg):=-\frac{\partial}{\partial x_i}.\]
One can verify that	$J_{\mathrm{std}}$ is an almost complex structure on $\mathbb{R}^{2n}$ compatible with $\omega_{\mathrm{std}}$ and $\omega_{\mathrm{std}}(\cdot, J_{\mathrm{std}}\cdot)$ is the standard Riemannian metric. 
\end{example}

\begin{proposition}\label{exist}
	Let $(X,\omega)$ be a symplectic manifold of dimension $2n$. Let $S$ be a compact submanifold of $X$ of the same dimension as $X$. Let $J_0$ be an almost complex structure on $S$ that is compatible with $\omega|_S$. There exists an almost complex structure $J$ on $X$ that is compatible with $\omega$ and agrees with $J_0$ on $S$, i.e., $J|_S=J_0$.
\end{proposition}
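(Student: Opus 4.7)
The plan is to use the pointwise polar decomposition of $\omega$ against a Riemannian metric: this is a canonical construction producing an $\omega$-compatible almost complex structure out of any Riemannian metric on $X$, and it inverts the passage $J\mapsto\omega(\cdot,J\cdot)$ from compatible almost complex structures to Riemannian metrics. So the strategy is to extend the metric $g_0:=\omega|_S(\cdot,J_0\cdot)$ associated to $J_0$ to a smooth metric on all of $X$, and then read off $J$ by polar decomposition.

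First I would extend the metric. Since $J_0$ is $\omega|_S$-compatible, $g_0:=\omega|_S(\cdot,J_0\cdot)$ is a smooth Riemannian metric on $S$. Because $S$ is a compact (hence closed) subset of $X$, a standard local argument in charts adapted to $S$ shows that $g_0$ extends to a smooth symmetric $(0,2)$-tensor $\tilde g$ on an open neighborhood $U$ of $S$ in $X$; shrinking $U$ if necessary keeps $\tilde g$ positive definite there. Pick any Riemannian metric $g_1$ on $X$ and a smooth cutoff $\rho\colon X\to[0,1]$ with $\rho\equiv 1$ on $S$ and $\operatorname{supp}(\rho)\subset U$. Then $g:=\rho\tilde g+(1-\rho)g_1$ is smooth, positive definite on all of $X$ (convex combinations of positive definite symmetric forms are positive definite), and agrees with $g_0$ on $S$.

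Next I would apply the pointwise polar decomposition to the pair $(\omega,g)$. Define a smooth bundle endomorphism $A\colon TX\to TX$ by $\omega(u,v)=g(Au,v)$. Non-degeneracy of $\omega$ makes $A$ invertible, and skew-symmetry of $\omega$ makes $A$ skew-adjoint with respect to $g$, so $A^{\top}A=-A^{2}$ is a smooth section of positive definite symmetric endomorphisms. Let $Q:=\sqrt{-A^{2}}$ be its unique smooth positive definite symmetric square root (smoothness coming from the functional calculus applied to the smooth family of positive operators $-A^{2}$), and set $J:=Q^{-1}A$. Since $Q$ is a function of $A^{2}$ it commutes with $A$, so $J^{2}=Q^{-2}A^{2}=-\operatorname{Id}$, and a direct computation gives $\omega(u,Jv)=g(Au,AQ^{-1}v)=g(-A^{2}u,Q^{-1}v)=g(Qu,v)$, which is symmetric and positive definite; hence $J\in\mathcal{J}_c(X,\omega)$.

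Finally I would check agreement on $S$. There $g=\omega(\cdot,J_0\cdot)$, and $\omega$-compatibility of $J_0$ yields $\omega(J_0u,J_0v)=\omega(u,v)$, so $g(J_0u,v)=\omega(J_0u,J_0v)=\omega(u,v)$. By the defining equation of $A$ this forces $A|_{S}=J_0$, whence $-A^{2}|_{S}=\operatorname{Id}$, $Q|_{S}=\operatorname{Id}$, and $J|_{S}=Q^{-1}A|_{S}=J_0$. The only substantive points are the smoothness of $Q$ and the smooth extension of $g_0$ from the closed subset $S$ to $X$, both of which are standard; everything else is a pointwise algebraic verification, and the canonicity of polar decomposition is what guarantees that $J_0$ is recovered on $S$ for free.
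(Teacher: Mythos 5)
Your argument is correct and follows the same strategy as the paper: extend the Riemannian metric $g_0=\omega(\cdot,J_0\cdot)$ from $S$ to all of $X$, then apply the pointwise polar decomposition of $\omega$ against the extended metric to produce $J$. The only small variation is in how you build the extension (a cutoff blend of a local extension with an arbitrary background metric, versus the paper's Whitney extension in boundary charts plus a partition of unity), and you additionally spell out the verification that $J|_S=J_0$, which the paper leaves implicit.
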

\begin{proof}
	We prove that there exists an extension of the metric $ g_0:=\omega (\cdot,J_0\cdot)$ to $X$. Then, we use the extended metric to extract an almost complex structure on each tangent space $T_pX$ which is compatible with $\omega_p$ and varies smoothly with respect to the base point $p$.
	
	Fix a point $x_0\in \partial S$ and choose a coordinate chart\ $(U,\varphi, x_1,x_2,...,x_{2n})$ around $x_0$ such that 
	\[\varphi:U\to \mathbb{B}^{2n}(1), \ \ \varphi(x_0)=0.\]
Here $\mathbb{B}^{2n}(1)$ denotes the unit ball centered at the origin in $\mathbb{R}^{2n}$. Since $S$ is a manifold with boundary, we can adjust $\phi$ so that
	\[\varphi(U\cap S)= \mathbb{B}^{+}:=\{(x_1,\dots,x_{2n})\in  \mathbb{B}^{2n}(1):x_{2n}\geq 0\}.\]
	Expressing $g_0$ in these coordinates, we get 
	\[g_0=\sum a_{ij}dx_i\otimes dy_j,\]
	where $a_{ij}$ are smooth real-valued functions on $U\cap S$. Composing these with $\varphi^{-1}$, we can think of these as real-valued smooths maps on $ \mathbb{B}^{+}$. 
	
	Let $\bar{a}_{ij}$ denote a smooth extension of $a_{ij}$ to $  \mathbb{B}^{-}=\{(x_1,\dots,x_{2n})\in \mathbb{B}^{2n}(1):x_{2n}\leq 0\}$. This is possible by  Whitney extension theorem \cite{Whitney1934}. This gives an extension of $g_0$ to  $U$
	\[g_0=\sum\bar{a}_{ij}dx_i\otimes dy_j.\]
	Cover $\partial S$ with finitely many charts $\{U_i\}$ and extend $g_0$ on each chart as above. Let $\{V_j\}$ be a cover of $X\setminus S$  by coordinate charts. Each $\{V_j\}$ carries a metric $g_j$ defined by 
	\[g_j:=\sum dx_i\otimes dy_i.\]
	Let $\{W_i\}$ be the cover of $X$ formed by $\operatorname{Int}(S)$, $\{U_i\}$ and $\{V_j\}$. Choose a partition of unity $\{\rho_i\}$ subordinate to $\{W_i\}$ and define 
	\[\bar{g}=\sum_{i}\rho_i g_i.\]
This is an extension of $g_0$ to $X$.
	
	Next, we construct $J$ with the desired properties. The construction goes point-wise as follows: fix $p\in X$, and let $J_p'$ be the endomorphism of the tangent space $T_pX$ defined by
	\[\bar{g}_p(J_p'\cdot,\cdot)=\omega(\cdot,\cdot).\]
By the non-degeneracy of $\omega$, we see that for any pair $v,w\in T_pX$
\[\bar{g}_p(J_p'v,w)=\omega(v,w)=-\omega(w,v)=-\bar{g}(v,J_p'w),\]
i.e., $J_p^{'*}=-J'_p$, where $J_p^{'*}$ denotes the adjoint of $J'_p$ with respect to $\bar{g}_p$. Hence $-J_p^{'2}$ is positive definite and symmetric. Let $K_p$ be the unique square root of $-J_p^{'2}$. Since $J_p'$ commutes with $K_p$ and $K_p$ is symmetric and positive
	definite, $p\to J_p:=K_p^{-1}J_p'$ is the required extension of the almost complex structure $J_0$. 
\end{proof}
\begin{definition}[Exponential Map]\label{exp}
	Geodesics on a Riemannian manifold $(X,g)$ solve Cauchy problems in local coordinates. For each $(p,v)\in TX$ there is a geodesic $\gamma:[0,\epsilon]\to X$ with $\gamma(0)=p$ and $\gamma'(0)=v$. For points $(p,v)\in TX$ for which $\gamma(1)$ makes sense, we define the exponential map as
	\[\operatorname{exp}_p(v)=\gamma(1).\]
\end{definition}
The map $\operatorname{exp}$ is defined on an open neighborhood of the zero section of $TX$, see \cite[Theorem 14.11]{LoringDG}. Moreover, for each point $p\in X$, $\operatorname{exp}_p$ is a diffeomorphism on some ball $B_r(0)\subseteq T_pX$ of radius $r$ onto its image.
\begin{definition}[Injectivity Radius]
	The injectivity radius of a Riemannian manifold $(X,g)$ at a point $p$ is defined by 
	\[\operatorname{injrad}(X,g, p):=\sup\big\{r:\operatorname{exp}_p|_{B_r(0)} \text{ is a diffeomorphism onto its image} \big\}.\]
	The injectivity radius of the Riemannian manifold $(X,g)$ is defined as 
	\[\operatorname{injrad}(X,g):=\inf_{p\in X} \operatorname{injrad}(X,g,p).\]
\end{definition}
\begin{proposition}\label{inj}
	For any compact Riemannian manifold $(X,\mu)$ we have $\operatorname{injrad}(X,g)>0$.
\end{proposition}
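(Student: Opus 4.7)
The plan is to prove positivity by a two-step combination of the inverse function theorem (for local bijectivity near the zero section) and compactness of $X$ (to get a uniform radius). The key object is the smooth map
\[
F : \mathcal{U} \to X \times X, \qquad F(p,v) := (p, \exp_p v),
\]
defined on the open neighborhood $\mathcal{U} \subset TX$ of the zero section on which $\exp$ exists (as provided by the cited result after Definition \ref{exp}).

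First I would compute the differential of $F$ at an arbitrary point $(p_0, 0)$ of the zero section. Using the fact that $d(\exp_{p_0})_0 = \mathrm{Id}_{T_{p_0}X}$ under the canonical identification $T_0(T_{p_0}X) \cong T_{p_0}X$, together with the fact that $F$ is the identity in the first coordinate, one checks that $dF_{(p_0,0)} : T_{(p_0,0)}TX \to T_{(p_0,p_0)}(X \times X)$ is an isomorphism. By the inverse function theorem, there is an open neighborhood $\mathcal{V}_{p_0} \subset \mathcal{U}$ of $(p_0, 0)$ on which $F$ is a diffeomorphism onto its image; in particular $F|_{\mathcal{V}_{p_0}}$ is injective and $\exp_{p_0}$ restricted to a small ball in $T_{p_0}X$ is a diffeomorphism onto its image, so $\operatorname{injrad}(X,g,p_0) > 0$ pointwise.

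To get a uniform lower bound, I would use compactness of the zero section $X \subset TX$. The family $\{\mathcal{V}_p\}_{p \in X}$ covers the zero section, and by compactness we may choose $r_0 > 0$ such that the tubular neighborhood $\mathcal{W} := \{(p,v) \in TX : |v|_g < r_0\}$ is contained in a set on which $F$ is a \emph{local} diffeomorphism. In particular, $d\exp_p$ is invertible at every $v \in T_pX$ with $|v|_g < r_0$, for every $p \in X$. This takes care of the immersion part of the definition of injectivity radius uniformly.

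The main obstacle, and the step that actually requires a separate argument, is upgrading from local injectivity of $F$ to \emph{global} injectivity of $\exp_p$ on a ball of uniform radius $r > 0$, since nothing in the inverse function theorem prevents two far-apart tangent vectors at $p$ from being mapped to the same point of $X$. I would handle this by contradiction and a diagonal extraction: assume the uniform radius does not exist, so for every $n$ there exist $p_n \in X$ and distinct $v_n, w_n \in T_{p_n}X$ with $|v_n|_g, |w_n|_g < 1/n$ and $\exp_{p_n}(v_n) = \exp_{p_n}(w_n)$. Since $X$ is compact, pass to a subsequence with $p_n \to p_\infty$; then in $TX$ we have $(p_n, v_n) \to (p_\infty, 0)$ and $(p_n, w_n) \to (p_\infty, 0)$. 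For $n$ large enough both lie in a neighborhood of $(p_\infty, 0)$ on which $F$ is injective, which forces $v_n = w_n$, contradicting $v_n \neq w_n$. Hence there exists $r \in (0, r_0]$ such that $\exp_p|_{B_r(0)}$ is an injective immersion from an open ball onto an open subset of $X$, hence a diffeomorphism onto its image, for every $p \in X$. This yields $\operatorname{injrad}(X,g) \geq r > 0$, completing the proof.
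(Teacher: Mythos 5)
Your proof is correct and takes essentially the same approach as the paper, which also works with the map $F(p,v)=(p,\exp_p v)$ on a neighborhood of the zero section and exploits compactness of $X$. The only difference is that the paper obtains the uniform radius via the Lebesgue number lemma applied to the open cover of the diagonal by the images of the local-diffeomorphism neighborhoods, whereas you extract it by a sequential-compactness contradiction argument; the two are interchangeable, and your version is if anything a bit more explicit about separating the uniform immersion bound from the uniform injectivity bound.
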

\begin{proof}
	We follow the argument in Hummel \cite{Hummel01}.	Each point $(p,v) \in TX$ has a neighborhood $V_{(p,v)}$ in $TX$ such that the map $G:V_{(p,v)}\to X\times X:(p,v)\to (p,\operatorname {exp}_p(v))$ is a diffeomorphism onto its image. The collection $\{G(V_{(p,v)})\}$ is an open cover of the diagonal in $X\times X$. Let $\epsilon>0$ be the Lebesgue number of this cover. For $p\in X$, denote by $B_\epsilon(p)$ the ball centered at $p$ and radius $\epsilon$ with respect to $g$. This means that for any $p\in X$ we have $B_{\epsilon}(p)\times B_{\epsilon}(p)\subseteq G(V_{(p,v)})$. Hence   $\operatorname{injrad}(X,g)>0.$
\end{proof}
\subsection{$J$-holomorphic curves and their moduli spaces}
\begin{definition}[$J$-holomorphic curve]
	Let $(X,J)$ be an almost complex manifold and $(S,j)$ be a Riemann surface.  A map $u:(S,j)\to (X,J)$ is called a $J$-holomorphic curve if its derivative $du:TX\to TX$ satisfies the equation
	\[du\circ j=J\circ du .\]
\end{definition}
\begin{remark}
	The differential $du$ splits  as
	\[du=\frac{1}{2}\big\{\underbrace{(du-J\circ du\circ j)}_{J\text{-linear}}+\underbrace{(du+J\circ du\circ j)}_{J\text{-antilinear}}\big\}.\]
	A map $u:(S,j)\to (X,J)$ is $J$-holomorphic if and only if the $J$-antilinear part vanishes, equivalently, the derivative $du$ is $J$-linear.
\end{remark}
\begin{remark}
	In case $(S,j)=(X,J)=(\mathbb{C},i)$, the equation above reduces to the usual Cauchy-Riemann equations in coordinates. Indeed, writing $du$ and $i$ in matrix forms and $u=(u_1,u_2)$, the equation $du\circ i=i\circ du$ can be written as 
	\[\bigg(\begin{matrix}
		\partial_{x}u_1 & \partial_{y}u_1\\
		\partial_{x}u_2 & \partial_{y}u_2
	\end{matrix}\bigg)\bigg(\begin{matrix}
		0 & -1\\
		1 & 0
	\end{matrix}\bigg)=\bigg(\begin{matrix}
		0 & -1\\
		1 & 0
	\end{matrix}\bigg)\bigg(\begin{matrix}
		\partial_{x}u_1 & \partial_{y}u_1\\
		\partial_{x}u_2 & \partial_{y}u_2
	\end{matrix}\bigg).\]
This is equivalent to the system of equations
	\[\partial_{x}u_1=\partial_{y}u_2,\  \partial_{x}u_2=-\partial_{y}u_1. \]
\end{remark}
A good introduction to the theory $J$-holomorphic curves is \cite{Wendl:2010aa}. If one wants to go deeper into the theory, one may continue with \cite{MacDuff}.

\begin{definition}[Simple $J$-holomorphic curves]
	Let $(S,j)$ be a closed Riemann surface and $(X,J)$ an almost complex manifold. A $J$-holomorphic curve $u:S\to X$ is called multiply covered if there is another closed Riemann surface $(S',j')$, a holomorphic branched curving $\phi:S\to S'$ and $J$-holomorphic curve $u':S'\to X$ such that 
	\[u=u'\circ \phi,\, \text{ and } \operatorname{degree}(\phi)>1.\]
A $J$-holomorphic curve is called simple if not multiply covered.
\end{definition}
\begin{definition}
	Let $(X,J)$ be an almost complex manifold and $(S,j)$ be any closed Riemann surface. Let $[S]$ be the fundamental class of $S$ representing the positive orientation of $S$. Every map $u:S\to X$ induces a map on the second homology
\[u_{*}:H_2(S, \mathbb{Z})\to H_2(X,\mathbb{Z}).\] 
Given $A\in H_2(X,\mathbb{Z})$, we say $u$ represents the homology class $A$ if $[u]:=u_{*}([S])=A$.
\begin{example}[Simple $J$-holomorphic curve]
Every curve in the moduli space (\ref{moduli}) is simple. To explain this, let $u:S\to X$ be a multiply covered $J$-holomorphic curve. Then by definition we can find a closed Riemann surface $(S',j')$, a holomorphic branched curving $\phi:S\to S'$ and $J$-holomorphic curve $u':S'\to X$ such that 
	\[u=u'\circ \phi,\, \text{ and } \operatorname{degree}(\phi)\in \mathbb{Z}_{\geq 2}.\]
This implies $u_{*}([S])=\operatorname{degree}(\phi)u'_{*}([S'])$. This is not possible if $u$ belongs to the moduli space (\ref{moduli}).
\end{example}
\end{definition}
Let $\operatorname{Aut}(S, j)$ denote the automorphism group of $(S,j)$, i.e.,  the group consisting of $j$-holomorphic map $g:S\to S$ that admits a $j$-holomorphic inverse $g^{-1}:S\to S$. The group $\operatorname{Aut}(\mathbb{CP}^1, i)$ is the group of M\"obius transformations.
\begin{definition}
Given an almost complex manifold  $(X,J)$ and a homology class $A\in H_{2}(X,\mathbb{Z})$. The moduli space of parameterized simple $J$-holomorphic spheres in $X$ representing the class $A$ is defined by 
\begin{equation*}
	\widehat{\mathcal{M}}^{s}(J,A):=\left\{
	\begin{array}{l}
		u:(\mathbb{CP}^1,i)\to (X,J),\\
		du\circ i=J\circ du ,\\
		u_*[S]=A\in H_2(X,\mathbb{Z}), \\
u \text{ is simple.}
\end{array}
	\right\}.
\end{equation*}
The moduli space of unparameterized simple $J$-holomorphic spheres in $X$ representing the class $A$ is defined by 
\begin{equation*}
\mathcal{M}^{s}(J,A):=\widehat{\mathcal{M}}^{s}(J,A)	\big/\sim,
\end{equation*}
where $u_1 \sim u_2$ if and only if $u_1=u_2\circ\varphi$ for some $\varphi \in \operatorname{Aut}(\mathbb{CP}^1,i)$.
\end{definition}
We topologize the moduli space $\widehat{\mathcal{M}}^{s}(J,A)$ with the $C^{\infty}$-topology and $\mathcal{M}^{s}(J,A)$ with the corresponding quotient topology. 
\begin{definition}
	Let $(X,\omega)$ be a symplectic manifold, and $J$ be an almost complex structure on $X$. We say $J$ is tamed by $\omega$ (or $J$ is $\omega$-tamed) if $\omega(v,Jv)>0$ for every non-zero tangent vector $v$.
\end{definition}
The space of all almost complex structures on $X$ tamed by $\omega$ is denoted by $\mathcal{J}_t(X,\omega)$. The space $\mathcal{J}_t(X,\omega)$ is endowed with $C^{\infty}$-topology. It is well-known that $\mathcal{J}_t(X,\omega)$ is nonempty and contractible {\cite[Prop. 4.1.1]{MR3674984}.
\begin{theorem}[{\cite[Theorem 3.1.5]{MacDuff}}]\label{trans0} Let $(X,\omega)$ be a closed symplectic manifold of dimension $2n$, and $A \in H_2(X,\mathbb{Z})$ be a homology class. There exists a subset $\mathcal{J}_{\mathrm{reg}}$ of $\mathcal{J}_t(X,\omega)$ such that:
	\begin{itemize}
		\item $\mathcal{J}_{reg}$ is a comeagre, i.e., it is a countable intersection of open dense subsets of  $\mathcal{J}_t(X,\omega)$.
		\item For every $J\in \mathcal{J}_{\mathrm{reg}}$, the moduli space $\mathcal{\widehat{M}}^s(J,A)$ is a smooth  oriented manifold of dimension
		\[2n+2c_1(A),\]
where $c_1$ denotes the first Chern number of the pullback bundle $(u^{*}TW, J)$ for a representative $u$ of the class $A$.
	\end{itemize}
\end{theorem}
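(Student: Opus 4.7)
The plan is to realize $\widehat{\mathcal{M}}^s(J,A)$ as the zero set of a smooth section of a Banach bundle, assemble the parametric version into a \emph{universal moduli space} $\widehat{\mathcal{M}}^s(\mathcal{J}_t,A):=\{(u,J): \bar\partial_J u=0,\ u \text{ simple}\}$, show that this larger space is itself a smooth Banach manifold, and then apply the Sard--Smale theorem to the projection $\pi:\widehat{\mathcal{M}}^s(\mathcal{J}_t,A)\to \mathcal{J}_t$ to obtain a comeagre set of regular values. The crux of the whole argument is universal transversality at simple curves.

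For the analytic setup, I would fix integers $k\geq 1$ and $p>2$ so that $W^{k,p}$-maps $\mathbb{CP}^1\to X$ are continuous, and work with the Banach manifold $\mathcal{B}^{k,p}_A$ of such maps in the class $A$. Over it sits a Banach bundle $\mathcal{E}^{k-1,p}$ whose fiber at $u$ is $W^{k-1,p}(\Lambda^{0,1}_J\otimes u^*TX)$, and the Cauchy--Riemann operator defines a smooth section $\bar\partial_J$. For each fixed $J$ the vertical linearization $D_u$ is a Fredholm operator, and the real index $2n+2c_1(A)$ is computed by Riemann--Roch on $\mathbb{CP}^1$ (using $\chi(\mathbb{CP}^1)=2$). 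To allow $J$ to vary smoothly I would work over a $C^\ell$-completion $\mathcal{J}^\ell_t$ of $\mathcal{J}_t(X,\omega)$, so that $\bar\partial$ extends to a smooth parametric section.

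The heart of the matter, and the step I expect to be the main obstacle, is showing that the \emph{universal} linearization
\[
\mathcal{D}_{(u,J)}(\xi,Y)=D_u\xi+\tfrac{1}{2}Y\circ du\circ j
\]
is surjective at every simple solution $(u,J)$. Because $D_u$ is Fredholm, its image is closed with finite-dimensional cokernel, so it suffices to prove that no nonzero section $\eta$ in the $L^2$-cokernel annihilates every perturbation $Y\circ du\circ j$. This is where simplicity enters: $u$ must possess an \emph{injective point} $z_0$, i.e.\ a point with $du(z_0)\neq 0$ and $u^{-1}(u(z_0))=\{z_0\}$. Using such a $z_0$ one builds a cutoff perturbation $Y$ supported in a small normal neighborhood of $u(z_0)$ whose $L^2$-pairing with $\eta$ is nonzero, forcing $\eta\equiv 0$ via unique continuation for the formal adjoint of $D_u$. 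The existence of an injective point for simple curves, together with the required unique continuation, is the delicate technical input.

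Once universal transversality is established, $\widehat{\mathcal{M}}^s(\mathcal{J}^\ell_t,A)$ is a Banach manifold and $\pi$ is Fredholm of index $2n+2c_1(A)$; Sard--Smale then gives a comeagre subset $\mathcal{J}^\ell_{\mathrm{reg}}\subset \mathcal{J}^\ell_t$ of regular values. A standard Taubes-type trick (intersecting the $\mathcal{J}^\ell_{\mathrm{reg}}$ for $\ell\to\infty$ and using density of smooth almost complex structures) upgrades this to a comeagre $\mathcal{J}_{\mathrm{reg}}\subset \mathcal{J}_t(X,\omega)$; elliptic regularity then promotes every $W^{k,p}$-solution with $J\in\mathcal{J}_{\mathrm{reg}}$ to a smooth one, so $\widehat{\mathcal{M}}^s(J,A)$ is a smooth manifold of dimension $2n+2c_1(A)$. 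Finally, a canonical orientation on $\det D_u$ arises by homotoping $D_u$ through Fredholm operators to a complex-linear Cauchy--Riemann operator, whose kernel and cokernel carry canonical complex orientations; transporting these along the homotopy gives a coherent orientation across the moduli space.
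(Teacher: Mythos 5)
The paper does not prove this theorem; it is stated with a citation to McDuff--Salamon, Theorem 3.1.5, and used as a black box. Your outline correctly reproduces the standard argument from that reference (universal moduli space, transversality at simple curves via injective points and unique continuation, Sard--Smale, Taubes' trick to pass from $C^\ell$ to $C^\infty$, and orientation by deformation to a complex-linear Cauchy--Riemann operator), so there is nothing to compare against in the paper itself.
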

\begin{theorem}[{\cite[Theorem 3.1.7]{MacDuff}}]\label{trans1} Let $(X,\omega)$ be a closed symplectic manifold of dimension $2n$.  Let  $\mathcal{J}_{t}(X,\omega)$ be the space of almost complex structures tamed by $\omega$, $A \in H_2(X,\mathbb{Z})$ be a homology class, and $\mathcal{J}_{reg}$ be set defined in Theorem \ref{trans0}. Given $J_0, J_1 \in \mathcal{J}_{reg}$, there exists a smooth path $\alpha:[0,1]\to \mathcal{J}_{t}(X,\omega)$ connecting $J_0$ to $J_1$ such that the moduli space
	\[\mathcal{\widehat{M}}^s(\alpha,A,\mathbb{CP}^1):=\big\{(t,u): t\in [0,1], u\in \mathcal{\widehat{M}}^s(\alpha(t),A)\big\}.\]
	is a smooth oriented manifold of dimension $2n+2c_1(A)+1$ with boundary 
	\[\partial \mathcal{\widehat{M}}^s(\alpha,A):=\mathcal{\widehat{M}}^s(J_0,A)\sqcup \mathcal{\widehat{M}}^s(J_1,A).\]
\end{theorem}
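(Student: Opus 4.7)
The plan is to realize the parametrized moduli space as a fiber of a Fredholm projection from a universal Banach manifold over a suitable path space, and then invoke the Sard--Smale theorem to produce a regular path $\alpha$.

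First I would fix $p>2$ and a large integer $\ell$, and consider the Banach manifold $\mathcal{P}^\ell$ of $C^\ell$-paths in $\mathcal{J}_t(X,\omega)$ from $J_0$ to $J_1$. Working in the $W^{1,p}$-completion of the space of maps $\mathbb{CP}^1\to X$ representing $A$, I would form the universal one-parameter moduli space
\[\mathcal{\widehat{M}}^s(\mathcal{P}^\ell,A):=\{(\alpha,t,u):\alpha\in\mathcal{P}^\ell,\,t\in[0,1],\,u\ \text{simple},\,\bar\partial_{\alpha(t)}u=0\}.\]
The main technical task is to show that this set is a smooth Banach manifold. By the implicit function theorem, it suffices to prove that the linearization of the universal $\bar\partial$-operator, in the variables $(\xi,s,Y)$ tangent to $(u,t,\alpha)$, is surjective at every zero. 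Granted this, the projection $\pi:\mathcal{\widehat{M}}^s(\mathcal{P}^\ell,A)\to\mathcal{P}^\ell$ is Fredholm of index $2n+2c_1(A)+1$: the extra $+1$ comes from the $t$-factor, while on fixed $(t,\alpha)$-slices the index is the standard $2n+2c_1(A)$ from Riemann--Roch, matching the count in Theorem \ref{trans0}.

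The Sard--Smale theorem then yields a comeagre set $\mathcal{P}^\ell_{\mathrm{reg}}\subset\mathcal{P}^\ell$ of regular values. A standard density argument (letting $\ell\to\infty$ and intersecting with $C^\infty$ paths) produces a smooth regular path $\alpha$ with $\alpha(0)=J_0$ and $\alpha(1)=J_1$. For such $\alpha$, the preimage $\pi^{-1}(\alpha)=\mathcal{\widehat{M}}^s(\alpha,A,\mathbb{CP}^1)$ is a smooth manifold of dimension $2n+2c_1(A)+1$, oriented via the determinant line of the Cauchy--Riemann operator. Since $J_0,J_1\in\mathcal{J}_{\mathrm{reg}}$ already satisfy transversality on their own slices, the $t=0$ and $t=1$ strata coincide with $\mathcal{\widehat{M}}^s(J_0,A)\sqcup\mathcal{\widehat{M}}^s(J_1,A)$ as the smooth boundary, as claimed.

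The main obstacle will be the surjectivity of the universal linearization at a simple solution $(u,t,\alpha)$. I would argue by contradiction: if some $\eta\in\mathrm{coker}\,D_u$ were orthogonal to every variation produced by $Y\in T_\alpha\mathcal{P}^\ell$, I must conclude $\eta\equiv 0$. The standard route uses (i) the existence, by simpleness, of an injective point $z_0\in\mathbb{CP}^1$ where $du(z_0)\ne 0$ and $u^{-1}(u(z_0))=\{z_0\}$; (ii) localised bump-function perturbations of $J$ supported in a small neighborhood of $u(z_0)\in X$ that pair nontrivially with $\eta(z_0)$ unless $\eta(z_0)=0$; and (iii) a unique continuation argument for the Cauchy--Riemann equation to propagate this pointwise vanishing globally. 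Ensuring that the admissible perturbations $Y$ are rich enough to generate the full cokernel while remaining $C^\ell$-smooth and tangent to the space of $\omega$-tame structures is the principal delicacy of the argument.
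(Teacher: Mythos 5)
The paper states this theorem as a citation to McDuff--Salamon~\cite[Theorem 3.1.7]{MacDuff} and does not give a proof of its own, so there is no in-paper argument to compare against. Your proposal correctly reconstructs the standard universal-moduli-space/Sard--Smale argument used in that reference: the one point worth making explicit (which you only gesture at) is that tangent vectors $Y$ to the fixed-endpoint path space vanish at $t=0,1$, so surjectivity of the universal linearization over the boundary strata has no contribution from $Y$ and relies entirely on the hypothesis $J_0,J_1\in\mathcal{J}_{\mathrm{reg}}$; and the passage from $C^\ell$ to $C^\infty$ paths is Taubes' trick, which is a genuine (if standard) extra step rather than a simple intersection.
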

\begin{remark}\label{trans123}
Theorem \ref{trans0} and Theorem \ref{trans1} hold if we replace the space of $\omega$-tamed almost complex structures $\mathcal{J}_{t}(X,\omega)$ by the space of $\omega$-compitable almost complex structures $\mathcal{J}_{c}(X,\omega)$.
\end{remark}
There is a well-defined action of the group $\operatorname{Aut}(\mathbb{CP}^1,i)$ on the product $\widehat{\mathcal{M}}^{s}(J,A)\times \mathbb{CP}^1$, namely, for $\varphi \in \operatorname{Aut}(\mathbb{CP}^1,i)$ and $(u,z)\in \widehat{\mathcal{M}}^{s}(J,A)\times \mathbb{CP}^1$ define 
\[\varphi \cdot(u,z):=(u\circ \varphi,\varphi^{-1}(z))\in \widehat{\mathcal{M}}^{s}(J,A)\times \mathbb{CP}^1.\] 
We define
\[\mathcal{\widehat{M}}^{s}(J,A)\times_{\operatorname{Aut}(\mathbb{CP}^1)} \mathbb{CP}^1:=\widehat{\mathcal{M}}^s(J,A)\times \mathbb{CP}^1\big/\operatorname{Aut}(\mathbb{CP}^1, i).\]
\begin{definition}
The map defined by 
\[\operatorname{ev}_{J}:\mathcal{\widehat{M}}^{s}(J,A)\times_{\operatorname{Aut}(\mathbb{CP}^1)} \mathbb{CP}^1\to X,\, [( u,z)]\to u(z)\]
is called one-point evaluation map.
\end{definition}
The map $\operatorname{ev}_{J}$  connects the topology of the moduli spaces of $J$-holomorphic curves and that of $X$. It can be used to know much about the symplectic topology of $X$, see \cite{MacDuff}.
\begin{proposition}\label{evaluation}
The one-point evaluation map $\operatorname{ev}_{J}$ is well-defined and continuous in $C^\infty$-topology.
 If $\mathcal{\hat{M}}^s(J,A)$ is regular, i.e, if $J\in \mathcal{J}_{\mathrm{reg}}$, then $\operatorname{ev}_{J}$ is a smooth map.
\end{proposition}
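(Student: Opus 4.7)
The plan is to split the proposition into three parts corresponding to its three claims: well-definedness of $\operatorname{ev}_J$, continuity in $C^\infty$-topology, and smoothness when $J$ is regular.

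For well-definedness, the verification is direct: given $\varphi\in\operatorname{Aut}(\mathbb{CP}^1,i)$ and a representative $(u,z)$, the translated representative $(u\circ\varphi,\varphi^{-1}(z))$ satisfies $(u\circ\varphi)(\varphi^{-1}(z))=u(z)$, so the value depends only on the equivalence class $[(u,z)]$. This takes a single line and is not the hard part.

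For continuity, my approach is to use the universal property of the quotient topology. It suffices to show that the lifted map
\[\widehat{\operatorname{ev}}_J\colon\widehat{\mathcal{M}}^{s}(J,A)\times\mathbb{CP}^1\longrightarrow X,\qquad (u,z)\longmapsto u(z),\]
is continuous, where the first factor carries the $C^\infty$-topology and the second the standard topology. Given a convergent sequence $(u_k,z_k)\to(u,z)$ in the product, $C^\infty$-convergence of $u_k$ to $u$ implies in particular uniform convergence on $\mathbb{CP}^1$, and together with $z_k\to z$ this gives $u_k(z_k)\to u(z)$ by the standard triangle-inequality argument for joint evaluation of equicontinuous maps. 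Since $\operatorname{ev}_J$ factors through $\widehat{\operatorname{ev}}_J$ and the quotient projection, continuity descends.

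For smoothness when $J\in\mathcal{J}_{\mathrm{reg}}$, I would invoke Theorem~\ref{trans0} to give $\widehat{\mathcal{M}}^{s}(J,A)$ the structure of a smooth manifold. The plan then has three substeps: (i) verify that $\widehat{\operatorname{ev}}_J$ is smooth on $\widehat{\mathcal{M}}^{s}(J,A)\times\mathbb{CP}^1$ by expressing it in local Sobolev slices used to construct the manifold structure, so that the evaluation functional becomes a composition of smooth maps (I would cite \cite{MacDuff} for this rather than redo the chart computations); (ii) check that the $\operatorname{Aut}(\mathbb{CP}^1,i)=\operatorname{PSL}(2,\mathbb{C})$-action on $\widehat{\mathcal{M}}^{s}(J,A)\times\mathbb{CP}^1$ is smooth, free, and proper, so that the quotient is a smooth manifold and the projection is a smooth submersion; freeness uses that every $u\in\widehat{\mathcal{M}}^s(J,A)$ is simple and hence has trivial stabilizer in $\operatorname{Aut}(\mathbb{CP}^1,i)$, while properness is standard for this action on simple curves; (iii) since $\widehat{\operatorname{ev}}_J$ is invariant under the action, it descends to a smooth map on the quotient by the universal property of smooth quotients by free proper actions.

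The main obstacle is substep~(i), smoothness of the evaluation functional, because it is the step that genuinely depends on the chart construction in Theorem~\ref{trans0} rather than on formal properties. Freeness of the action on simple curves and properness are standard and straightforward to cite; continuity and well-definedness are essentially book-keeping. So the bulk of the work, or rather the bulk of the reference to prior machinery, sits in making precise why evaluating a representative at a point varies smoothly with the representative in the local slice model of the moduli space.
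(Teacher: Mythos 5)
Your proof takes essentially the same route as the paper: check well-definedness directly, lift to the parameterized moduli space $\widehat{\mathcal{M}}^{s}(J,A)\times\mathbb{CP}^1$ for continuity, and for smoothness express the lifted evaluation in exponential-map Sobolev charts where it becomes evaluation of a section at a point, hence a bounded linear (so smooth) map, and descend to the quotient. The paper carries out your substep~(i) only for fixed $z$ and then writes ``we leave it to the reader to complete the proof''; your substep~(ii), verifying that the $\operatorname{Aut}(\mathbb{CP}^1,i)$-action is smooth, free (using simplicity of the curves), and proper so that the quotient projection is a submersion, is exactly the piece the paper omits, so your outline is if anything a bit more complete.
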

\begin{proof}
	If $[(u,z)]=[(v,w)]$, then there exists $\varphi \in \operatorname{Aut}(\mathbb{CP}^1,i)$ such that $(u,z)=(u\circ \varphi,\varphi^{-1}(z))$. This implies $\operatorname{ev}_{J}[u,z]=\operatorname{ev}_{J}[v,w]$. So $\operatorname{ev}_{J}$ is well defined.
	
	With the topology on $\mathcal{\widehat{M}}^s(J,A)$ defined above, the evaluation map $\operatorname{ev_{J}}$ is continuous. Indeed, a $C^0$-small perturbation in $u$ brings small change in $u(z)$ which proves the continuity of  $\operatorname{ev}_{J}$ in the $C^0$-topology on $\mathcal{\widehat{M}}^s(J,A)$.
	
	If $J\in \mathcal{J}_{\mathrm{reg}}$, then $\mathcal{\widehat{M}}^s(J,A)$ is a smooth manifold by Theorem \ref{trans0}. We prove the map
	\[\operatorname{ev}_{J}:\mathcal{\widehat{M}}^s(J,A)\times \mathbb{CP}^1\to X, (u,z)\to u(z)\]
	is smooth and descends to a smooth map on the quotient $\mathcal{\widehat{M}}^s(J,A)\times_{\operatorname{Aut}(\mathbb{CP}^1)} \mathbb{CP}^1$.
	
	Let $U$ be an open neighborhood of the zero section in $TX$ such that exponential map $\operatorname{exp}:U\to W$ is a diffeomorphism onto its image. For a smooth map $u:\mathbb{CP}^1\to  X$  define\footnote{As a reference for Sobolev spaces of sections of vector bundles, we recommend {\cite[Appendix A.4]{Wendl:2016aa}}.}
\begin{equation*}
		W^{1,2}(u^*TU):=\left\{
		\begin{array}{l}
			\xi: \mathbb{CP}^1\to u^*TX,\\
			\xi \text{ is a section of the bundle $u^*TX$},\\
			\xi(z)\in U,\\
			\xi \text{ is } W^{1,2}\text{-regular}.
		\end{array}
		\right\}.
	\end{equation*}
$\big\{W^{1,2}(u^*TU),\operatorname{exp}_{u}\}_{u\in C^{\infty}(\mathbb{CP}^1, 	X)}$ is a smooth Banach manifold structure on 
\begin{equation*}
	W^{1,2}(\mathbb{CP}^1,X):=\left\{
	\begin{array}{l}
		\operatorname{exp}_u(\xi):\mathbb{CP}^1\to X,\\
		u\in C^{\infty}(\mathbb{CP}^1,X),\\
		\xi \in W^{1,2}(u^*TU).
	\end{array}
	\right\}.
\end{equation*}
The map $\operatorname{ev}_J$ extends to $W^{1,2}(\mathbb{CP}^1,X)$ on the obvious way. This extended  $\operatorname{ev}_J$ looks like the following in local coordinates for any fixed $z$:
	\[\operatorname{exp}_{u(z)}^{-1}\circ \operatorname{ev}_{J}\circ \operatorname{exp}_{u(z)}: W^{1,2}(u^*TU)\to T_{u(z)}X, \, \xi \to \xi(z).\]
	This is just taking an element in the Banach space of sections $ W^{1,2}(u^*TU)$  and evaluating it at $z$ into the Banach space $T_{u(z)}X$. This proves the smoothness of
	\[\operatorname{ev}_{J}:\mathcal{\widehat{M}}^s(J,A)\times \mathbb{CP}^1\to X, ( u,z)\to u(z)\]
	for fixed $z$. We leave it to the reader to complete the proof. 
\end{proof}
\begin{definition}
	A Hermitian manifold is a triple $(X,J,\mu)$ where $X$ is a smooth manifold, $J$ is an almost complex structure, and $\mu$ is a Riemannian metric such that 
	\[\mu(v,w)=\mu(Jv,Jw)\] 
	for all tangent vectors $v$ and $w$.
\end{definition}
\begin{definition}\label{areaform}
	Let $(S,j)$ be a Riemann surface and $(X,J,\mu)$  be a Hermitian manifold. The $\mu$-area of a  map $u:S\to X$ is defined by
	\[\operatorname{Area}_\mu(u):=\int_{S}\sigma_{u^{*}\mu},\]
where $\sigma_{u^{*}\mu}$ is the 2-form defined by
	\[\sigma_{u^{*}\mu}(v,w):=\bigg(\mu(du(v),du(v))\mu(du(w),du(w))-\mu(du(v),du(w))^{2}\bigg)^{\frac{1}{2}},\]
	for a positively  orientated vectors ${v,w}$ in any tangent space of $S$.
\end{definition}
\begin{proposition}\label{areacomp}
Let $(S,j,h)$ be a Riemann surface with a Hermitian metric $h$. Let $(X,J,\mu)$  be a Hermitian manifold. For every $J$-holomorphic curve $u:(S,j)\to (X,J)$ we have 
\[\operatorname{Area}_\mu(u)=\int_{S}\|du\|^2_{\mu}\operatorname{vol}_{h},\]
where $\operatorname{vol}_{h}:=\sigma_{\operatorname{Id}^{*}h}$ is the volume form on $S$ induced by $h$ and $\|du\|_{\mu}$ is the operator norm of the differential $du$ with respect to $h$ and $\mu$.
\end{proposition}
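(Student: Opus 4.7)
The plan is to establish the pointwise identity $\sigma_{u^{*}\mu}=\|du\|_{\mu}^{2}\,\operatorname{vol}_{h}$ on $S$ and then integrate. The central observation is that $J$-holomorphicity forces $du$ to be (complex-)linear at every point, which in a Hermitian target makes $du$ conformal from $(T_pS,h_p)$ to $(T_{u(p)}X,\mu_{u(p)})$; once conformality is in hand, both sides of the claimed identity reduce to the same square of a single ``stretch factor''.

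More precisely, fix $p\in S$ and pick an $h$-orthonormal basis $\{v, jv\}$ of $T_pS$, which exists because every Hermitian metric on a Riemann surface is $j$-invariant (this is the $2$-dimensional analogue of the $J$-invariance of $\mu$, or one can orthonormalise a unit vector $v$ with $jv$ directly). The $J$-holomorphic equation $du\circ j=J\circ du$ gives $du(jv)=J\,du(v)$. Using that $J$ is $\mu$-orthogonal, I would compute
\[
\mu(du(v),du(jv))=\mu(du(v),J\,du(v))=0,\qquad
\mu(du(jv),du(jv))=\mu(J\,du(v),J\,du(v))=\mu(du(v),du(v)).
\]
Writing $a^{2}:=\mu(du(v),du(v))$, Definition \ref{areaform} then yields directly
\[
\sigma_{u^{*}\mu}(v,jv)=\bigl(a^{2}\cdot a^{2}-0\bigr)^{1/2}=a^{2}.
\]

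Next, I would check that this same $a^{2}$ equals $\|du_p\|_{\mu}^{2}$. Any unit vector in $T_pS$ has the form $w_\theta=\cos\theta\, v+\sin\theta\, jv$, and by $\mathbb{R}$-linearity together with the orthogonality just established,
\[
\mu(du(w_\theta),du(w_\theta))=\cos^{2}\theta\cdot a^{2}+\sin^{2}\theta\cdot a^{2}=a^{2},
\]
so the operator norm satisfies $\|du_p\|_\mu^{2}=a^{2}$. Since $\{v,jv\}$ is $h$-orthonormal, $\operatorname{vol}_h(v,jv)=1$, and combining gives
\[
\sigma_{u^{*}\mu}(v,jv)=a^{2}=\|du_p\|_{\mu}^{2}\,\operatorname{vol}_h(v,jv).
\]
Both $\sigma_{u^{*}\mu}$ and $\operatorname{vol}_h$ are $2$-forms on $S$, so agreement on one positively oriented basis at each point is agreement as forms. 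Integrating over $S$ produces the claimed equality.

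There is no real obstacle here: the whole content is the conformality of $J$-holomorphic maps, and the bookkeeping reduces to comparing two expressions in the single scalar $a^{2}$. The only point that warrants a short sentence in the write-up is the existence of a $j$-orthonormal basis $\{v, jv\}$; this is immediate from the fact that on a real $2$-plane any metric is automatically invariant under the $\pm 90^{\circ}$ rotation $j$, so $v\perp_h jv$ as soon as $\|v\|_h=\|jv\|_h$, which itself follows by polarisation from $j^{2}=-\operatorname{Id}$ after normalising.
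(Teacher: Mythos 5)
Your argument is correct and follows essentially the same route as the paper: both reduce to the conformality of $J$-holomorphic maps, which makes $\sigma_{u^{*}\mu}$ proportional to $\operatorname{vol}_h$ with factor $\|du\|_\mu^2$. You are actually a bit more careful than the paper, which asserts the conformality ``$u^{*}\mu = fh$'' without proof, whereas you derive it directly from $du(jv)=J\,du(v)$ and $J$-invariance of $\mu$.

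One small correction to your closing paragraph: it is \emph{not} true that ``on a real $2$-plane any metric is automatically invariant under the $\pm 90^{\circ}$ rotation $j$'' --- take $h = 2\,dx^2 + dy^2$ with the standard $j$ and note $h(\partial_x,\partial_x)\neq h(j\partial_x,j\partial_x)$. What saves you is that the hypothesis declares $h$ a \emph{Hermitian} metric, which by the paper's definition means $h(jv,jw)=h(v,w)$. From this, $\|jv\|_h=\|v\|_h$ and $h(v,jv)=h(jv,j^2v)=-h(v,jv)$, so $h(v,jv)=0$ for every $v$; normalizing any $v$ then gives the desired $h$-orthonormal basis $\{v,jv\}$. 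Your conclusion is right, but the justification should appeal to the Hermitian hypothesis rather than a false general claim about arbitrary metrics.
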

\begin{proof}
Every $J$-holomorphic curve $u:(S,j)\to (X,J)$ is a conformal map, i.e., $u^{*}\mu=f h$ for some smooth function $f:S\to \mathbb{R}$. For a non-zero tangent vector $v$ of $S$ we have 
\[f=\frac{u^{*}\mu(v,v)}{h(v,v)}=\frac{\mu(du(v),du(v))}{h(v,v)}.\]
The left hand of this equation does not depend on $v$, so
\[f=\sup_{v}\frac{\mu(du(v),du(v))}{h(v,v)}=\|du\|^2_{\mu}.\]
So we have $u^{*}\mu=\|du\|^2_{\mu} h$. Also note that $h(v,jv)=0=\mu(du(v),Jdu(v))=u^*\mu(v,jv)$. We conclude that
\[\sigma_{u^{*}\mu}=\|du\|^2_{\mu}\sigma_{\operatorname{Id}^{*}h}.\]
Thus 
\[\operatorname{Area}_\mu(u):=\int_{S}\sigma_{u^{*}\mu}=\int_{S}\|du\|^2_{\mu}\sigma_{\operatorname{Id}^{*}h}=\int_{S}\|du\|^2_{\mu}\operatorname{vol}_{h}.\qedhere\]
\end{proof}
\begin{definition}
	Let $(X,\omega)$ be a symplectic manifold and $(S,j)$ be any Riemann surface. The symplectic area  of a map $u:S\to (X,\omega)$ is defined by
	\[E(u):=\int_{S} u^{*}\omega.\]
\end{definition}
\begin{lemma}\label{comparison}
	Let $(X, \omega, J)$ be any symplectic manifold with and $\omega$-compatible almost of complex structure $J$ and let $(S,j)$ be a Riemann surface. For any smooth map $u:S\to X$ we have the following estimate:
	\[\operatorname{Area}_\mu(u):=\int_{S}\sigma_{u^{*}\mu}\geq \int_{S}u^{*}\omega=:E(u),\]
where $\mu=\omega(\cdot,J\cdot)$. The equality holds if $u$ is $J$-holomorphic. 
\end{lemma}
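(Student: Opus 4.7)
The plan is to reduce the inequality to a pointwise algebraic statement on each tangent space of $S$ and then integrate. Fix $z\in S$ and a positively oriented basis $\{v,w\}$ of $T_zS$, and set $a:=du(v)$, $b:=du(w)$ in $T_{u(z)}X$. The definition of $\sigma_{u^{*}\mu}$ together with the identity $\omega(\cdot,\cdot)=\mu(J\cdot,\cdot)$ (immediate from $\omega$-compatibility of $J$) yield
\[\sigma_{u^{*}\mu}(v,w)=\sqrt{\mu(a,a)\mu(b,b)-\mu(a,b)^{2}},\qquad u^{*}\omega(v,w)=\mu(Ja,b).\]
So everything reduces to the pointwise Wirtinger-type bound
\[\mu(Ja,b)^{2}\le \mu(a,a)\mu(b,b)-\mu(a,b)^{2}.\]

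For the algebraic step I would first note that $\omega$-compatibility forces $\mu$ to be $J$-invariant: $\mu(JX,JY)=\omega(JX,J^{2}Y)=-\omega(JX,Y)=\omega(Y,JX)=\mu(Y,X)=\mu(X,Y)$. In particular $Ja$ is $\mu$-orthogonal to $a$ with the same $\mu$-length. If $a=0$ the inequality is trivial; otherwise decompose $b=\alpha a+\beta Ja+b^{\perp}$ with $b^{\perp}\in\operatorname{span}\{a,Ja\}^{\perp}$. A direct computation gives $\mu(a,b)=\alpha\,\mu(a,a)$, $\mu(Ja,b)=\beta\,\mu(a,a)$, and $\mu(b,b)=(\alpha^{2}+\beta^{2})\mu(a,a)+\mu(b^{\perp},b^{\perp})$, whence
\[\mu(a,a)\mu(b,b)-\mu(a,b)^{2}=\beta^{2}\mu(a,a)^{2}+\mu(a,a)\mu(b^{\perp},b^{\perp})\ge \mu(Ja,b)^{2},\]
with equality iff $b^{\perp}=0$ and $\beta\ge 0$.

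Integrating the pointwise bound over $S$ yields $\operatorname{Area}_{\mu}(u)\ge E(u)$. For the equality assertion, suppose $u$ is $J$-holomorphic; at every $z\in S$ where $du_{z}\neq 0$, pick any nonzero $v\in T_{z}S$ and take $w:=jv$, so $\{v,w\}$ is positively oriented. Then $b=du(jv)=J\,du(v)=Ja$, forcing $\alpha=0$, $\beta=1$, $b^{\perp}=0$ in the decomposition above and turning the pointwise bound into an equality; at points where $du$ vanishes both integrands vanish. I do not see a real obstacle here, since the argument is essentially Wirtinger's inequality. The one minor point worth flagging is that $\sigma_{u^{*}\mu}$ is non-negative while $u^{*}\omega$ may have either sign, but the estimate above controls $|\mu(Ja,b)|$ and hence gives the one-sided inequality uniformly.
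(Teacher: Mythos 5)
Your proposal is correct, and it proves the same pointwise Wirtinger-type inequality that underlies the paper's proof, but the algebraic organization is different in a way worth noting. The paper orthogonalizes in the \emph{source}: it replaces $w$ by $w'$ with $u^{*}\mu(v,w')=0$ and then applies Cauchy--Schwarz to $\mu(Jdu(v),du(w'))$ in the target. This requires $u^{*}\mu$ to be a genuine metric on $T_zS$, which is only guaranteed when $du_z$ has rank $2$; the paper dismisses $du_z=0$ but silently skips the rank-$1$ case (where both $2$-forms vanish anyway, so the statement still holds, but the argument as written does not cover it). You instead decompose directly in the \emph{target} along the $J$-complex line $\operatorname{span}\{a,Ja\}$ and its $\mu$-orthocomplement, which works uniformly for all ranks of $du_z$, makes the equality condition ($b^{\perp}=0$ and $\beta\ge 0$) completely transparent, and immediately recovers the $J$-holomorphic case by taking $w=jv$. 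So: same underlying inequality, but your decomposition is cleaner on degenerate points and gives a sharper equality characterization; the paper's route is slightly shorter where it applies.

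One small remark on the final equality claim in your write-up: you verify $\sigma_{u^{*}\mu}(v,jv)=u^{*}\omega(v,jv)$ for a single positively oriented frame $\{v,jv\}$. This suffices because both sides are top-degree forms on the $2$-manifold $S$, hence determined by their value on any one oriented basis of each tangent space; it is worth stating this explicitly so the reader does not worry that the equality might depend on the choice of $w$.
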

\begin{proof}
	Recall that $\sigma_{u^{*}\mu}$ is defined by 
	\[\sigma_{u^{*}\mu}(v,w):=\bigg(\mu(du(v),du(v))\mu(du(w),du(w))-\mu(du(v),d(w))^2\bigg)^{\frac{1}{2}},\]
	for any positively  orientated vectors ${v,w}$ in any tangent space of $S$. We prove that 
	\[\sigma_{u^{*}\mu}(v,w)\geq u^{*}\omega(v,w).\]
	This holds at those points where the derivative $du$ vanishes. So, we can assume $du$ nowhere vanishes. Then $u^{*}\mu$ is a well-defined metric on $S$. Let $v$ and $w'$ denote the orthonormalized version of $v,w$ with respect to the metric $u^{*}\mu$. One can see that
	\[u^{*}\omega(v,w)=u^{*}\omega(v,w')\] 
	and 
	\[\sigma_{u^{*}\mu}(v,w):=\bigg(\mu(du(v),du(v))\mu(du(w'),du(w'))\bigg)^{\frac{1}{2}}.\]
	\begin{equation*}
		\begin{split}
			u^{*}\omega(v,w)&=u^{*}\omega(v,w')  \\
			&=\omega(du(v),du(w'))\\
			&=\mu(Jdu(v),du(w')) \text{    (by definition of $\mu$)}\\
			&\leq \sqrt{\mu(Jdu(v),Jdu(v))\mu(du(w'),du(w'))} \text{    (Cauchy-Schwartz inequality)} \\
			&=\sqrt{\mu(du(v),du(v))\mu(du(w'),du(w'))} \\
			&=\sqrt{u^{*}\mu(v,v)u^{*}\mu(w',w')}\\
			&=\sigma_{u^{*}\mu}(v,w).
		\end{split}
	\end{equation*}

The vectors $jv$ and $w'$ are parallel with respect to the metric $u^{*}\mu$, so the Cauchy-Schwartz inequality applied to these two vectors is equality. Repeating the above steps with $u$ being $J$-holomorphic yields 
	\[\sigma_{u^{*}\mu}(v,w)= u^{*}\omega(v,w).\qedhere\]
\end{proof}
The following  is an easy corollary that follows from Proposition \ref{areacomp} and Lemma \ref{comparison}.
\begin{corollary}\label{comparison0}
	Let $(X, \omega, J)$ be any symplectic manifold with $\omega$-compatible almost complex structure $J$ and let $\mu$ be the Hermitian metric defined by $\mu:=\omega(\cdot,J\cdot)$. Let $(S,j, h)$ be a Riemann surface with a Hermitian metric $h$. Let $u:S\to X$ be $J$-holomorphic, then
	\[E(u)=\int_{S}u^{*}\omega=\int_{S}\|du\|^2_{\mu}\operatorname{vol}_{h}.\]
	In particular, $E(u)\geq 0$ and the equality holds if and only if $u$ is constant.
\end{corollary}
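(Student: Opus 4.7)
The plan is to chain together the two preceding results exactly as the hint suggests: Lemma \ref{comparison} gives the first equality in the equality case ($u$ is $J$-holomorphic), and Proposition \ref{areacomp} rewrites the resulting area integral as a Dirichlet-type integral of $\|du\|_\mu^2$.

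First I would invoke the equality clause of Lemma \ref{comparison}: since $u$ is $J$-holomorphic, the Cauchy--Schwarz step in the proof of that lemma is saturated pointwise, so $\sigma_{u^*\mu}=u^*\omega$ as $2$-forms on $S$. Integrating over $S$ yields
\[
E(u)=\int_{S}u^{*}\omega=\int_{S}\sigma_{u^{*}\mu}=\operatorname{Area}_\mu(u).
\]
Next I would apply Proposition \ref{areacomp}, which was proved for a general $J$-holomorphic curve using conformality of $u$, to rewrite
\[
\operatorname{Area}_\mu(u)=\int_{S}\|du\|_{\mu}^{2}\,\operatorname{vol}_{h}.
\]
Stringing these together gives the claimed identity $E(u)=\int_S\|du\|_\mu^2\,\operatorname{vol}_h$.

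For the second assertion, the integrand $\|du\|_\mu^{2}$ is a non-negative continuous function on $S$, so $E(u)\geq 0$ is immediate. For the equality case: if $E(u)=0$, then $\|du\|_\mu^{2}\equiv 0$ on $S$ since the integrand is non-negative and continuous and $\operatorname{vol}_h$ is a positive density; hence $du\equiv 0$, and $u$ is locally constant, i.e.\ constant on each connected component of $S$ (and constant on $S$ whenever $S$ is connected, as in all the applications in the paper). Conversely, if $u$ is constant, then $u^*\omega\equiv 0$ so $E(u)=0$.

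There is no real obstacle here: once Lemma \ref{comparison} and Proposition \ref{areacomp} are in hand, the only subtlety is making sure to use the equality version of Lemma \ref{comparison} rather than the inequality, and noting that the argument for the last sentence only needs $\|du\|_\mu^2\ge 0$ together with continuity, so no measure-theoretic care beyond the standard ``non-negative continuous integrand with zero integral vanishes identically'' is required.
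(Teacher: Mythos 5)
Your argument chains the equality case of Lemma \ref{comparison} with Proposition \ref{areacomp} exactly as the paper intends --- the paper simply declares the corollary ``follows from Proposition \ref{areacomp} and Lemma \ref{comparison}'' without spelling out the details, and you have filled them in correctly. Your remark that a vanishing differential gives ``locally constant'' rather than ``constant'' on a disconnected domain is a minor refinement the paper glosses over, but it is harmless since all the paper's applications take $S=\mathbb{CP}^1$.
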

\subsection{Properties of $J$-holomorphic curves}
In this section, we list some important properties of $J$-holomorphic curves. These will be cited in Section \ref{mainproof} in our proofs of Theorem \ref{compact1}.
\begin{lemma}[Monotonicity lemma, cf. {\cite[Theorem 1.3]{Hummel01}}]\label{monochap1}
Let	$(S,j)$ be a compact Riemann surface with non-empty boundary. Let $(X, J, g)$ be a compact Hermitian manifold. For $p\in X$, let $B_r(p)$ denote the open ball of radius $r$ centered at $p$ in $(X,g)$. There exist constants $c_1, c_2>0$ that only depend on $(J, g)$ such that for every $J$-holomorphic curve $u:S\to X$ satisfying $u(\partial S)\cap B_r(u(s_0))=\emptyset$ for $s_0 \in S\backslash \partial S$ and $r\in (0,c_2)$ we have 
	\begin{equation*}\label{area}
	\operatorname{Area}_g	(u(S)\cap B_r(u(s_0)))\geq c_1r^2.
	\end{equation*}
\end{lemma}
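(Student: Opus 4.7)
The plan is to prove this local monotonicity estimate by reducing it, via geodesic normal coordinates centered at $p := u(s_0)$, to a nearly Euclidean isoperimetric-type inequality for a $J$-holomorphic map. First, I would choose $c_2 > 0$ smaller than both the injectivity radius of $(X,g)$ and a geometric constant to be fixed later, so that for all $r\in(0,c_2)$ the ball $B_r(p)$ lies in a single geodesic normal chart $\varphi:B_{c_2}(0)\subset\mathbb{R}^{2n}\to B_{c_2}(p)$. In this chart, the pulled-back metric satisfies $\varphi^*g = g_{\mathrm{std}}+O(|x|^2)$ and the pulled-back almost complex structure satisfies $\varphi^*J = J_0+O(|x|)$, where $J_0$ is a constant $g_{\mathrm{std}}$-compatible complex structure obtained by parallel-transporting $J_p$.

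The key construction is a local 2-form $\omega_0 := \varphi_*(d\lambda)$ on $B_{c_2}(p)$, where $\lambda$ is a primitive of the standard Kähler form of $(g_{\mathrm{std}},J_0)$, chosen so that $|\lambda|_{g_{\mathrm{std}}}(x) \leq \tfrac{1}{2}|x|$. By shrinking $c_2$ if necessary, $\omega_0$ tames $\varphi^*J$ pointwise, and there is a constant $c>0$ (depending only on $(J,g)$) such that $\omega_0(v,Jv) \geq c\,|v|_g^2$ on $B_{c_2}(p)$. By Proposition~\ref{areacomp} and Lemma~\ref{comparison}, for any $J$-holomorphic $u$ the 2-form $u^*\omega_0$ is nonnegative and controls the area: $u^*\omega_0 \geq c\,\sigma_{u^*g}$ pointwise on $u^{-1}(B_{c_2}(p))$.

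Set $\Sigma_r := u^{-1}(B_r(p))$ and $a(r) := \operatorname{Area}_g(u(S)\cap B_r(p))$. The hypothesis $u(\partial S)\cap B_r(p)=\emptyset$ ensures $\partial\Sigma_r$ lies in the interior of $S$, and by Sard's theorem it is a smooth 1-manifold for a.e.\ $r\in(0,c_2)$. Stokes' theorem together with the bound $|\lambda|\leq \tfrac{1}{2}|x|$ then gives
\[
c\cdot a(r)\;\leq\; \int_{\Sigma_r}u^*\omega_0 \;=\; \int_{\partial\Sigma_r} u^*\lambda \;\leq\; \tfrac{1}{2}r\cdot \operatorname{length}\bigl(u(\partial\Sigma_r)\bigr).
\]
On the other hand, the coarea formula applied to $\rho\circ u$, where $\rho$ is the geodesic distance to $p$, together with the fact that $|d(\rho\circ u)|\leq |du|_g$, shows that $a'(r) \geq \operatorname{length}(u(S)\cap \partial B_r(p))$ for a.e.\ $r$. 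Combining these inequalities produces a differential inequality of the form $a(r) \leq C\,r\,a'(r)$ for a.e.\ $r\in(0,c_2)$. Since $u$ is a non-constant $J$-holomorphic curve through $p$, $a(r)>0$ for every $r>0$, and integrating the inequality $(\log a)'(r)\geq 1/(Cr)$ from $\varepsilon$ to $r$ and letting $\varepsilon\to 0$ (using the Taylor expansion $a(\varepsilon)\gtrsim \varepsilon^{2k}$ controlled by the order of vanishing of $du$ at $s_0$) yields $a(r)\geq c_1 r^2$ after choosing $c_2$ so that the constant $C$ is slightly larger than $1/2$.

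The main technical obstacle will be sharpening the exponent to exactly $2$: this requires tracking the $O(|x|^2)$ error in the metric comparison and the $O(|x|)$ error in $J$ carefully enough that the sharp Euclidean factor $\tfrac{1}{2}$ in the isoperimetric step survives up to a multiplicative constant independent of $r$. This is arranged by shrinking $c_2$ depending only on the $C^2$-norms of $g$ and $J$ in normal coordinates, which makes both $c_1$ and $c_2$ depend only on $(J,g)$ as required.
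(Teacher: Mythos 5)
The paper does not prove this lemma itself; it only cites Hummel for it, so there is no internal argument to compare against. Your outline follows the standard route for the monotonicity lemma (geodesic normal coordinates, a local primitive $\lambda$ of a taming form, Stokes' theorem, an isoperimetric estimate, and integration of a differential inequality for $a(r)$), but the integration step, as written, has a genuine gap.

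You reduce to $a(r)\le C\,r\,a'(r)$ and then claim the conclusion follows ``after choosing $c_2$ so that the constant $C$ is slightly larger than $1/2$.'' This cannot work: if $C>1/2$ is a genuine constant then integrating $(\log a)'(r)\ge 1/(Cr)$ from $\varepsilon$ to $r$ gives $a(r)\ge a(\varepsilon)(r/\varepsilon)^{1/C}$ with $1/C<2$; since $a(\varepsilon)=O(\varepsilon^2)$, the right-hand side tends to $0$ as $\varepsilon\to 0$ and yields no bound at all. What one actually needs is an inequality whose constant degenerates to exactly $1/2$ as $r\to 0$, say $a(r)\le\tfrac{r}{2}(1+C'r)\,a'(r)$, so that $(\log a)'(r)\ge \tfrac{2}{r}-2C'$ and hence $a(r)\ge a(\varepsilon)(r/\varepsilon)^2 e^{-2C'r}$: the exponent must stay exactly $2$ and the curvature errors must be pushed into a bounded multiplicative factor. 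Relatedly, the small-radius input you invoke, $a(\varepsilon)\gtrsim\varepsilon^{2k}$ with $k$ the vanishing order of $du$ at $s_0$, is not the right statement and would destroy the uniformity of $c_1$ over all curves, since $k$ is unbounded. The correct fact is the uniform density bound $\liminf_{\varepsilon\to0}a(\varepsilon)/\varepsilon^2\ge\pi$, valid at every interior point of any non-constant $J$-holomorphic curve regardless of branching; this is what makes $c_1$, $c_2$ depend only on $(J,g)$. (A smaller issue: the Stokes estimate controls $\int_{\partial\Sigma_r}u^*\lambda$ by the length of the parametrized boundary curve, i.e.\ counted with multiplicity, not by the length of the image set $u(\partial\Sigma_r)$; it is cleanest to run the whole argument for the area with multiplicity $A(r):=\int_{\Sigma_r}\sigma_{u^*g}$ and only at the end pass to $\operatorname{Area}_g(u(S)\cap B_r(p))$.)
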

We are also interested in the following special case of this lemma.
\begin{lemma}[cf. {\cite[Theorem I.4.1]{Zehmisch-Geiges}} ]\label{monoi}
	Let $(S,j)$ be a compact Riemann surface with non-empty boundary, and let $B^{2n}(r)$ be the ball of radius $\sqrt{r/\pi}$ centered at the origin in $\mathbb{R}^{2n}$. Every $J$-holomorphic curve  $u:S\to (\mathbb{R}^{2n},\omega_{\mathrm{std}}, J_{\mathrm{std}})$ with $u(s_0)=0$ for some $s_0 \in S\backslash \partial S$ and $u(\partial S)\cap  B^{2n}(r)=\emptyset$ satisfies
	\[\operatorname{Area}_{\mathrm{std}}\big(u(S)\cap B^{2n}(r)\big)=\int_{S}u^*\omega_{\mathrm{std}}\geq r.\]
\end{lemma}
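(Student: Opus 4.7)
The plan is to derive a differential inequality for the area function
\[a(t) := \int_{u^{-1}(B^{2n}(t))} u^{*}\omega_{\mathrm{std}}, \quad t \in (0, r],\]
and combine it with the local behaviour of $u$ at $s_0$. Since $J_{\mathrm{std}}$ is the standard complex structure on $\mathbb{R}^{2n}\cong \mathbb{C}^{n}$, the map $u$ is genuinely holomorphic; the assumption $u(\partial S)\cap B^{2n}(r)=\emptyset$ combined with $u(s_0)=0$ forces $u$ to be non-constant, so $u^{-1}(0)$ is a discrete subset of $S\setminus \partial S$. Set $f(s) := \pi |u(s)|^{2}$; then $f|_{\partial S}\geq r$, and for each $t\in(0,r)$ the sublevel set $S_t := f^{-1}([0,t])$ is a compact subset of the interior of $S$. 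By Sard's theorem, almost every $t\in(0,r)$ is a regular value of $f$, and for such $t$ the set $S_t$ is a compact surface with smooth boundary $\partial S_t = f^{-1}(t)$.

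For a regular value $t$, I would apply Stokes' theorem using the primitive $\lambda_{\mathrm{std}} = \tfrac{1}{2}\sum_{i}(x_i\,dy_i - y_i\,dx_i)$ of $\omega_{\mathrm{std}}$, first excising small disks around the finitely many points of $u^{-1}(0)\cap S_t$; since $\lambda_{\mathrm{std}}$ vanishes at the origin, these excisions contribute nothing in the limit, and one obtains $a(t) = \int_{\partial S_t} u^{*}\lambda_{\mathrm{std}}$. For $x\in \partial B^{2n}(t)$ and $v\in T_x\mathbb{R}^{2n}$ the pointwise bound
\[|\lambda_{\mathrm{std}}(v)| = \tfrac{1}{2}|\omega_{\mathrm{std}}(x,v)| = \tfrac{1}{2}|\langle J_{\mathrm{std}} x, v\rangle| \leq \tfrac{1}{2}\sqrt{t/\pi}\,|v|\]
integrates to $a(t) \leq \tfrac{1}{2}\sqrt{t/\pi}\cdot L(t)$, where $L(t)$ is the Euclidean length (with multiplicity) of $u|_{\partial S_t}$. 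A coarea computation using the pullback metric $u^{*}g_{\mathrm{std}}$ — for which $a(t)=\operatorname{Area}_{u^{*}g_{\mathrm{std}}}(S_t)$ and $|\nabla |u||_{u^{*}g_{\mathrm{std}}}=1$ off the critical locus — gives $a(t) = \int_0^{\sqrt{t/\pi}} L(\pi \rho^{2})\,d\rho$, so that $L(t) = 2\sqrt{\pi t}\,a'(t)$ at regular values. Combining both estimates yields the key inequality $a(t) \leq t\,a'(t)$, equivalently $(a(t)/t)'\geq 0$, so $a(t)/t$ is non-decreasing.

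Finally, choose holomorphic coordinates $z$ on a neighborhood of $s_0$ in which $u(z) = c z^{k} + O(z^{k+1})$ for some $k\geq 1$ and $c\in \mathbb{C}^{n}\setminus\{0\}$; the contribution of this neighborhood to $a(t)$ is $kt + o(t)$ as $t\to 0^{+}$, since the image asymptotically covers a disk of capacity $t$ in the symplectic tangent plane $\operatorname{span}_{\mathbb{R}}(c, J_{\mathrm{std}} c)$ with multiplicity $k$. Hence $\liminf_{t\to 0^{+}} a(t)/t \geq 1$, and the monotonicity of $a(t)/t$ then forces $a(t)\geq t$ for all $t\in(0,r]$; in particular $a(r)\geq r$, which is the desired conclusion. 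I expect the most delicate step to be the coarea identity $L(t)=2\sqrt{\pi t}\,a'(t)$, which requires care at critical values of $f$ and at the (discrete) branch points of $u$; the remaining ingredients are a routine Stokes argument and elementary pointwise estimates.
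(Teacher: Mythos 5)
The paper states Lemma \ref{monoi} without proof, simply citing Zehmisch--Geiges, so there is no in-paper argument to compare against. Your proposal is the standard isoperimetric/monotonicity argument from the literature: Stokes with the radial primitive $\lambda_{\mathrm{std}}$, a coarea bound on the boundary length, the resulting differential inequality $a(t)\leq t\,a'(t)$, and the local expansion at $s_0$ to pin down the $t\to 0^+$ asymptotics. The overall structure is sound and does reach the desired conclusion $\int_S u^*\omega_{\mathrm{std}}\geq a(r)\geq r$.

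One claim in the middle step is false as stated, though it is repairable and the repair actually makes the argument slightly easier. You assert $|\nabla|u||_{u^*g_{\mathrm{std}}}=1$ off the critical locus, and hence the coarea \emph{identity} $a(t)=\int_0^{\sqrt{t/\pi}}L(\pi\rho^2)\,d\rho$, i.e.\ $L(t)=2\sqrt{\pi t}\,a'(t)$. In fact $du(\nabla|u|)$ is the orthogonal projection of the unit radial vector $u/|u|$ onto $\operatorname{im}(du)$, so one only has $|\nabla|u||_{u^*g_{\mathrm{std}}}\leq 1$, with equality precisely when $u/|u|$ is tangent to the curve. The coarea formula therefore gives $a(t)=\int_0^{\sqrt{t/\pi}}\bigl(\int_{\{|u|=\rho\}}|\nabla|u||^{-1}\,d\ell\bigr)d\rho$, and since $|\nabla|u||^{-1}\geq 1$ this yields $a'(t)\geq L(t)/(2\sqrt{\pi t})$, i.e.\ $L(t)\leq 2\sqrt{\pi t}\,a'(t)$. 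Combined with the Stokes bound $a(t)\leq\tfrac12\sqrt{t/\pi}\,L(t)$ this still gives $a(t)\leq t\,a'(t)$, so the remainder of your argument (monotonicity of $a(t)/t$ plus $\liminf_{t\to 0^+}a(t)/t\geq k\geq 1$) goes through unchanged. You should also state explicitly that $a$ is absolutely continuous (which follows from the coarea representation), so that $a(t)\leq t\,a'(t)$ a.e.\ really does imply $a(t)/t$ is non-decreasing.
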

\begin{lemma}[Gromov-Schwarz lemma, cf. {\cite[Corollary 1.2]{Hummel01}}]\label{gschap1}
	Let $(X, J, g)$ be any compact Hermitian manifold and $(D^2(1),i,\lambda)$ be the unit disk with the standard complex structure and a metric $\lambda$ conformally equivalent to the standard metric, i.e., $\lambda=h^2(dx^2+dy^2)$ for some function $h:D^2(1)\to \mathbb{R}$.
There exist positive constants $\varepsilon, C_{J,g}>0$ such that every $J$-holomorphic curve $u:(D^2(1), i)\to (X,J)$ with $u(D^2(1))\subseteq B_\varepsilon(p)$ for some $p\in X$ satisfies
	\[\|du(0)\|_{\lambda, g}\leq C_{J,g},\]
	where the constant $C_{J,g}>0$ only depends on $(J,g)$. Moreover, the constant $C_{J,g}$ varies continuously with respect to $J$ and $g$ in  $C^{\infty}$-topology.
\end{lemma}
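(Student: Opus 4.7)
The plan is to transfer the problem, via the exponential map at $p$, to a small Euclidean ball in $\mathbb{R}^{2n}$ equipped with an almost complex structure that equals $J_{\mathrm{std}}$ at the origin, and then argue by contradiction through a rescaling (Hofer blow-up) procedure whose limit is forced to be a nonconstant $J_{\mathrm{std}}$-holomorphic map from $\mathbb{C}$ into the single point $0$.

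For the setup, I would fix $\varepsilon_0 < \operatorname{injrad}(X,g)$ (positive by Proposition~\ref{inj}). For each $p \in X$, choose an orthonormal basis of $(T_pX, g_p)$ in which $J_p = J_{\mathrm{std}}$, identifying $T_pX \cong \mathbb{R}^{2n}$. The push-forward $\tilde J_p := (\exp_p^{-1})_* J$ on the Euclidean ball $B_{\varepsilon_0}(0)$ satisfies $\tilde J_p(0) = J_{\mathrm{std}}$, and compactness of $X$ yields uniform bounds $\|\tilde J_p - J_{\mathrm{std}}\|_{C^k(B_r(0))} \leq C_k(J,g)\cdot r$ for $r \leq \varepsilon_0$, with constants continuous in $(J,g)$ in $C^\infty$-topology. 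Since $J$-holomorphicity on a disk is conformally invariant, I may work with the Euclidean metric on $D^2(1)$, absorbing $1/h(0)$ into the final constant.

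Now suppose no valid $(\varepsilon, C_{J,g})$ exists. Extract $J$-holomorphic maps $u_k:(D^2(1),i)\to (X,J)$, points $p_k\in X$, and $\varepsilon_k\to 0$ with $u_k(D^2(1))\subseteq B_{\varepsilon_k}(p_k)$ and $\|du_k(0)\|\to\infty$. Applying Hofer's rescaling lemma to $\|du_k\|:D^2(1/2)\to\mathbb{R}_{\geq 0}$, I find $z_k\in D^2(1/2)$, radii $\delta_k\to 0$, and $R_k:=\|du_k(z_k)\|\to\infty$ with $R_k\delta_k\to\infty$ and $\|du_k\|\leq 2R_k$ on the disk of radius $\delta_k$ around $z_k$. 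Rescaling and passing to normal coordinates at $p_k$, define
\[
w_k(\zeta) := \exp_{p_k}^{-1}\bigl(u_k(z_k + \zeta/R_k)\bigr), \qquad \zeta \in D^2(R_k\delta_k).
\]
Each $w_k$ is $\tilde J_{p_k}$-holomorphic with image in $B_{\varepsilon_k}(0)$, and by the chain rule together with $\|d\exp^{-1}_{p_k}\|\to 1$ on $B_{\varepsilon_k}(p_k)$, one has $\|dw_k(0)\|\to 1$ and $\|dw_k\|_{C^0}\leq C'$ uniformly in $k$. The $C^1$-bound upgrades by standard elliptic bootstrapping for Cauchy--Riemann type equations (as in \cite[Sec.~2.2.3]{Casim2014}) to $C^\infty_{\mathrm{loc}}$-bounds. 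After extracting a subsequence with $p_k\to p_\infty$, Arzel\`a--Ascoli yields $w_k\to w_\infty$ in $C^\infty_{\mathrm{loc}}(\mathbb{C},\mathbb{R}^{2n})$. Since $w_k(\zeta)\in B_{\varepsilon_k}(0)$ and $\tilde J_{p_k}(0)=J_{\mathrm{std}}$, the bound $\|\tilde J_{p_k}-J_{\mathrm{std}}\|_{C^0(B_{\varepsilon_k}(0))}\leq C_0(J,g)\varepsilon_k\to 0$ gives that the limit $w_\infty$ is $J_{\mathrm{std}}$-holomorphic with image in $\{0\}$, hence constant, so $\|dw_\infty(0)\|=0$; this contradicts $\lim\|dw_k(0)\|=1$.

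Continuity of $C_{J,g}$ in $(J,g)$ in $C^\infty$-topology follows by tracking each ingredient: the injectivity radius, the constants $C_k(J,g)$, and the elliptic bootstrapping constants are all continuous in $(J,g)$. The main obstacle will be the $C^1$-to-$C^\infty_{\mathrm{loc}}$ bootstrapping against a \emph{varying} family of almost complex structures $\tilde J_{p_k}$; this is resolved precisely by the shrinking-image observation, which reduces the limiting equation to $\bar\partial_{J_{\mathrm{std}}} w_\infty = 0$ with constant coefficients. A secondary technical point is the careful application of Hofer's lemma on the interior disk $D^2(1/2)$ to guarantee that all rescalings stay inside the domain where $u_k$ is defined.
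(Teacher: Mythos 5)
The paper does not supply its own proof of this lemma; it is cited directly from Hummel's monograph, where the argument is a direct a priori estimate built on the monotonicity lemma and isoperimetric/elliptic estimates, producing an explicit constant. Your compactness argument via Hofer's rescaling lemma, normal coordinates, and elliptic bootstrapping to a limit plane mapping into a point is a genuinely different route, and in outline it does establish the existence of $(\varepsilon, C_{J,g})$ for a fixed $(J,g)$. Note however that it is exactly the bubbling mechanism (Hofer rescaling, bootstrap, limit curve) which the paper's abstract advertises avoiding; if one replaces the Hummel citation by your proof, the stated advantage of the paper's approach becomes largely cosmetic, since the central compactness theorem then rests on a lemma whose proof \emph{is} bubbling analysis.

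The substantive gap is the final claim: your argument does not establish that $C_{J,g}$ varies continuously with $(J,g)$ in the $C^\infty$-topology. A contradiction argument gives existence of a bound but no formula, so ``tracking each ingredient'' has no purchase --- the injectivity radius, the constants controlling $\tilde J_p - J_{\mathrm{std}}$, and the elliptic constants never feed into a computed value of $C_{J,g}$. What your method can deliver, by running the same contradiction over a sequence $(J_k,g_k)\to (J,g)$ with the accompanying normal-coordinate charts also varying, is local uniform boundedness of $C_{J,g}$ (equivalently, uniformity over compact families). That is in fact all the paper uses in Subsection~\ref{proof-Gromov-Schwarz lemma}, but it is strictly weaker than the stated continuity, which the direct Hummel-style proof does yield because there $C_{J,g}$ comes out as an explicit function of the monotonicity constants $c_1,c_2$ and the injectivity radius. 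You should either switch to the direct estimate, or downgrade the conclusion to uniformity over compact families and check that this still suffices. A small additional point: you are right that $1/h(0)$ enters $\|du(0)\|_{\lambda,g}$, which means the constant cannot literally depend only on $(J,g)$ unless $\lambda$ is fixed; this is a wrinkle in the statement as written, not in your proof, but it deserves to be flagged rather than silently absorbed.
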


\begin{theorem}[Mean value inequality]\label{meanvaluek}
Let $(X,J)$ be a compact almost complex manifold. Denote by $D^2(r)$ the disk of radius $r>0$ centered at the origin in $\mathbb{C}$. For any Riemannian metric $g$ on $X$, there exist positive constants $c_{J,g}, c_g>0$ such that for every $J$-holomorphic disk
	$u:D^2(r)\to X$ with 
	\[\int_{D^2(r)}\|du\|_g^2< c_{J,g}\]
	we have
	\[\|du(0)\|_g^2\leq \frac{16c_g}{\pi r^2}\int_{D^2(r)}\|du\|_g^2. \]
	Moreover, the constant $c_{J,g}$ depends continuously on $J$ and $g$ in the $C^\infty$-topology, and $c_g$ is continuous with respect to the metric $g$ in the $C^\infty$-topology. If $J$ preserves the metric $g$, i.e., $g(J\cdot, J\cdot)=g(\cdot, \cdot)$, then $c_g=1$.
\end{theorem}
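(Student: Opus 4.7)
The plan is to prove the inequality first when $J$ preserves $g$ (in which case $c_g = 1$), and then transfer to an arbitrary Riemannian metric by a uniform comparison on the compact manifold $X$. Writing $g_0 := \tfrac{1}{2}\bigl(g + g(J\cdot,J\cdot)\bigr)$, the bilinear forms $g$ and $g_0$ are equivalent with a constant that is continuous in $g$ in the $C^\infty$-topology and equal to $1$ when $g$ already preserves $J$. Inserting this comparison at the output point $0$ and under the energy integral in the $g_0$-estimate upgrades it to the stated bound for $g$, with the factor $c_g$ in $\tfrac{16 c_g}{\pi r^2}$ absorbing the price of the two applications of the comparison (up to a harmless renaming). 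The threshold $c_{J,g}$ is rescaled in the same way from $c_{J,g_0}$, so its continuity in $(J,g)$ follows from that of $c_{J,g_0}$.

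Assume then that $g$ is $J$-preserving, and set $e(z) := \tfrac{1}{2}\|du(z)\|_g^2$. The core analytic input is the Bochner-type differential inequality
\[\Delta e \;\geq\; -A\,e^2 \qquad \text{on } D^2(r),\]
where $A > 0$ depends continuously on $(J,g)$ in the $C^\infty$-topology through the Riemann tensor of $g$ and the covariant derivative $\nabla J$. It is obtained by differentiating the $J$-holomorphic equation $du\circ i = J\circ du$ and invoking the Weitzenb\"ock formula, using the $J$-invariance of $g$ to kill the cross terms that would otherwise pollute the right-hand side.

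Given this inequality, I would conclude via the standard Heinz rescaling trick for nonnegative subsolutions of $\Delta e \geq -A e^2$ with small $L^1$ norm. Define $h(\rho) := (r-\rho)^2 \sup_{\bar{D}^2(\rho)} e$ on $[0,r]$, let $\rho^* \in [0,r)$ maximize $h$, let $z^* \in \bar{D}^2(\rho^*)$ realize the inner supremum, and put $\delta := (r-\rho^*)/2$. If $\delta^2 e(z^*)$ is bounded by a constant depending only on $A$, then integrating $\Delta e \geq -A e^2$ against a radial cutoff on $D^2(z^*,\delta)$ yields the sub-mean bound $e(0) \leq \tfrac{8}{\pi r^2}\int_{D^2(r)} e$, and the theorem follows. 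If instead $\delta^2 e(z^*)$ is large, rescaling $u$ on $D^2(z^*,\delta)$ would produce a $J$-holomorphic disk of energy at most $\int_{D^2(r)} e$ but with arbitrarily large gradient at the center; choosing the threshold $c_{J,g_0} > 0$ small enough (determined entirely by $A$) excludes this alternative, and $c_{J,g_0}$ inherits its continuous $(J,g)$-dependence from $A$.

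The main obstacle will be the Bochner computation, together with the quantitative bookkeeping needed so that both $c_{J,g}$ and the rescaling factor $c_g$ depend continuously on $(J,g)$ in the $C^\infty$-topology. The Heinz iteration, the sub-mean estimate, and the final metric comparison are routine once the differential inequality and its constant $A$ are under uniform control; threading that continuity through each rescaling step is the delicate point.
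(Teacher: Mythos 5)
Your proposal takes essentially the same route as the paper: first establish a Bochner-type inequality $\Delta e \geq -A e^2$ for $J$-preserving metrics (the paper's Theorem~\ref{meanvalue} and its lemmas), then run the Heinz trick with the same auxiliary function $h(\rho)=(r-\rho)^2\sup_{\bar D^2(\rho)} e$, and finally pass to a general metric $g$ by a uniform comparison with a Hermitian $g_0$ on the compact manifold $X$, tracking continuity of the constants throughout. The only cosmetic difference is that in the Heinz dichotomy the paper rules out ``$\delta^2 e(z^*)$ large'' by plugging $t=1/\sqrt{4c}$ directly into the sub-mean bound to contradict the energy threshold, whereas you gesture at a rescaling/bubbling heuristic for the same exclusion; both land on the same choice of $c_{J,g_0}$ and the same final estimate.
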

Theorem \ref{meanvaluek} follows from the following slightly more general theorem.
\begin{theorem}[cf. {\cite[Prop. 4.1]{Zinger:2017aa}}]\label{meanvalue}
Let $(X,J,g)$ be a Hermitian manifold, possibly non-compact. Let $B_l(x)$ denote the ball of radius $l>0$  centered at a point $x$ in $ (X,g)$. There exists a continuous function $f_{J,g}:X\times \mathbb{R} \to(0,\infty)$ such that every  $J$-holomorphic map $u:D^2(r)\to X$ that satisfies 
	\[u(D^2(r))	\subseteq B_l(x)\  and \ \int_{D^2(r)}\|du\|_g^2< f_{J,g}(x,l),\]
also satisfies 
	\[\|du(0)\|_g^2\leq \frac{16}{\pi r^2}\int_{D^2(r)}\|du\|_g^2. \]
\end{theorem}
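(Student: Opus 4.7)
The plan is to derive a differential inequality of sub-harmonic type for the local energy density $e(z) := \tfrac{1}{2}\|du(z)\|_g^2$ and then to invoke a classical ``Heinz trick'' (cf.\ {\cite[Lemma~4.3.2]{MacDuff}}) to convert that inequality into a mean value bound. Throughout, the constant will be read off in a way that makes its dependence on $(x, l, J, g)$ manifestly continuous.

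First, I would establish a pointwise inequality of the form
\[
\Delta e(z) \;\geq\; -\, C\bigl(u(z)\bigr)\, e(z)^{2} \quad\text{on } D^{2}(r),
\]
where $C : X \to (0,\infty)$ is a continuous function built from the Riemann curvature of $g$, the covariant derivative $\nabla J$, and the Nijenhuis tensor of $J$. This is obtained by a Weitzenböck/B\"ochner type computation applied to the $J$-holomorphic equation $du \circ i = J \circ du$: one differentiates this identity twice, commutes covariant derivatives using the curvature, and uses $J^{2} = -\operatorname{Id}$ together with $\omega$-compatibility hidden in the Hermitian metric. Since $u(D^{2}(r)) \subseteq B_{l}(x)$ and the geometric data ($g$, $J$, their derivatives, the curvature) are smooth, the supremum
\[
C_{0}(x,l,J,g) \;:=\; \sup_{y \in \overline{B_{l}(x)}} C(y)
\]
is finite and depends continuously on $x$, on $l$, and on $(J,g)$ in the $C^{\infty}$-topology; on the disk $D^{2}(r)$ we therefore have $\Delta e \geq -C_{0}\, e^{2}$.

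Next I would apply the standard analytic lemma: there is an absolute constant such that every $C^{2}$ function $e : D^{2}(r) \to [0,\infty)$ with $\Delta e \geq -C_{0}\, e^{2}$ and $\int_{D^{2}(r)} e \,<\, \tfrac{\pi}{8 C_{0}}$ satisfies the mean value bound
\[
e(0) \;\leq\; \frac{8}{\pi r^{2}} \int_{D^{2}(r)} e.
\]
The proof of this lemma is by contradiction and rescaling: assuming failure, one uses Hofer's lemma to extract a point $z_{0} \in D^{2}(r)$ where $e$ achieves a suitably large local maximum, rescales the disk around $z_{0}$ so that $e(z_{0}) = 1$, and observes that the rescaled $e$ becomes genuinely subharmonic on a disk of large radius with bounded $L^{1}$-norm, forcing $e(z_{0}) < 1$ by the sub-mean-value property, a contradiction. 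Setting $f_{J,g}(x,l) := \tfrac{\pi}{8\, C_{0}(x,l,J,g)}$ and recalling $\|du\|_{g}^{2} = 2e$, the two ingredients combine to give exactly the stated estimate, and continuity of $f_{J,g}$ follows from continuity of $C_{0}$.

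I expect the main obstacle to be the careful tracking of continuous dependence in the constant $C_{0}(x,l,J,g)$, rather than either of the two main ingredients themselves, both of which are well established. In particular, one has to check that $C(y)$, which involves first and second derivatives of $J$ and the curvature of $g$, depends continuously in $C^{\infty}$-topology on $(J,g)$, and that the supremum of a continuous function over $\overline{B_{l}(x)}$ depends continuously on the center $x$ and the radius $l$ (which follows from standard Hausdorff-continuity of the closed-ball map together with continuity of $C$). Once this is in place, the Weitzenböck inequality and the Heinz trick assemble into the theorem without further difficulty.
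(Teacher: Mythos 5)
Your proposal follows the same route as the paper's proof: you derive the Bochner-type differential inequality $\Delta e \geq -C(u(z))\,e^2$ using the $J$-holomorphic equation, covariant differentiation, and curvature commutation (the paper's Theorem~\ref{meanvalue} proof does exactly this, with the constant $C_g(x,l)$ coming from the curvature of $g$ and the covariant derivatives of $J$ on $\overline{B_l(x)}$), and you then feed it into the analytic lemma that converts $\Delta e \geq -C_0 e^2$ plus small $L^1$-energy into a mean value bound (the paper's Lemma~\ref{meanvlaue2}, proved via the Heinz trick through Lemmas~\ref{meanvlaue23} and~\ref{meanvlaue1}). The only cosmetic difference is that you invoke Hofer's lemma and a rescaling argument where the paper uses the Heinz trick with the auxiliary function $f(t)=(1-t)^2\max_{\bar D^2(t)}w$; both are standard, closely equivalent devices for the same point-selection step, so this is not a different approach in any substantive sense.
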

\begin{proof}[Proof of Theorem \ref{meanvaluek}]

Pick a Hermitian metric $g_0$ on $(X,J)$. Since $X$ is compact, $k:=\operatorname{diameter}(X,g_0)$ is finite and positive.  Define
	\[ c_{J,g_0}:=\min_{(x,r)\in X\times [0,k]} f_{J,g_0}(x,r)< \infty,\]
where $f_{J,g_0}$ is the function that appeared in Theorem \ref{meanvalue}. By Theorem \ref{meanvalue}, for any $J$-holomorphic disk $u:D^2(r)\to X$ satisfying   
\[\int_{D^2(r)}\|du\|_{g_0}^2< c_{J,g_0}\]
we have
\[\|du(0)\|_{g_0}^2\leq \frac{16}{\pi r^2}\int_{D^2(r)}\|du\|_{g_0}^2. \]
	
Since the manifold $X$ is compact, any Riemannian metric on $X$ is comparable to $g_0$. If  $g$ is any other metric on $X$, then one can find constants $c_g, c_{J,g}>0$ such 
for any $J$-holomorphic disk $u:D^2(r)\to X$ satisfying   
\[\int_{D^2(r)}\|du\|_{g}^2< c_{J,g}\]
we have
\[\|du(0)\|_{g}^2\leq \frac{16c_{g}}{\pi r^2}\int_{D^2(r)}\|du\|_{g}^2. \]
The comparability constant $c_{g}$ depends continuously on $g$ in $C^\infty$-topology. In the proof of Theorem \ref{meanvalue}, it can be observed that in case $X$ is compact, the constant $c_{J,g}>0$ depends continuously on $J$ and $g$ in the $C^\infty$-topology.
\end{proof}
The proof of Theorem \ref{meanvalue} is based on the following two lemmas.
\begin{lemma}\label{meanvlaue1} Let $w:D^2(r)\to \mathbb{R}$ be a non-negative $C^2$-function such that $-b\leq \Delta w$ for some constant $b>0$, where $\Delta$ denotes the Laplacian. Then 
	\[w(0)\leq \frac{br^2}{8}+\frac{1}{\pi r^2}\int_{D^2(r)}w.\]
	
\end{lemma}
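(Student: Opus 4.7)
The idea is that $w$ is merely ``$-b$-subharmonic'' rather than subharmonic, but we can absorb the defect into a quadratic correction and reduce to the standard sub-mean-value inequality. More precisely, my plan is to set
\[
v(z) := w(z) + \tfrac{b}{4}|z|^2.
\]
Since $\Delta(|z|^2) = 4$ on $\mathbb{C}$, we get $\Delta v = \Delta w + b \geq 0$, so $v$ is a genuine $C^2$ subharmonic function on $D^2(r)$.

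Next, I would invoke the sub-mean-value inequality for $C^2$ subharmonic functions, namely
\[
v(0) \leq \frac{1}{\pi r^2}\int_{D^2(r)} v.
\]
If needed, this can be justified by a short standard argument: the circular average $\phi(\rho) := \frac{1}{2\pi}\int_0^{2\pi} v(\rho e^{i\theta})\, d\theta$ satisfies $\phi'(\rho) = \frac{1}{2\pi\rho}\int_{D^2(\rho)} \Delta v \geq 0$ via Green's identity, so $\phi$ is non-decreasing with $\phi(0)=v(0)$; integrating $v(0)\leq \phi(\rho)$ against $\rho\, d\rho$ from $0$ to $r$ converts the circle average bound into the disk average bound above.

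Finally, I would unwrap the definition of $v$: since $w \leq v$ pointwise, and
\[
\int_{D^2(r)} \tfrac{b}{4}|z|^2\, dA = \frac{b}{4}\cdot 2\pi\int_0^r \rho^3\, d\rho = \frac{\pi b r^4}{8},
\]
the sub-mean-value inequality yields
\[
w(0) \leq v(0) \leq \frac{1}{\pi r^2}\int_{D^2(r)} w + \frac{1}{\pi r^2}\cdot\frac{\pi b r^4}{8} = \frac{1}{\pi r^2}\int_{D^2(r)} w + \frac{b r^2}{8},
\]
which is exactly the claimed estimate. The only non-routine ingredient is the sub-mean-value inequality for $C^2$ subharmonic functions, but this is classical and its derivation from Green's identity is elementary, so I do not anticipate any real obstacle here; the main conceptual point is simply recognizing the correct quadratic tweak $\tfrac{b}{4}|z|^2$ that restores subharmonicity.
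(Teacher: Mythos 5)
Your proof is correct and takes essentially the same route as the paper: both introduce the quadratic correction $v = w + \tfrac{b}{4}|z|^2$ to make $v$ subharmonic and then apply the sub-mean-value inequality over the disk. The only cosmetic difference is that the paper writes $w(0) = v(0)$ directly (since the correction vanishes at the origin) rather than your $w(0) \leq v(0)$, but this changes nothing.
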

\begin{proof}[Proof of Lemma \ref{meanvlaue1}]
	The function $v: D^2(r)\to \mathbb{R}$ defined by 
	\[v(s,t):=w(s,t)+\frac{b}{4}(s^2+t^2)\]
	is subharmonic, i.e., $0\leq\Delta v$. By the mean value inequality for sub-harmonic functions, we have 
\[w(0)=v(0)\leq\frac{1}{\pi r^2}\int_{D^2(r)}v=\frac{br^2}{8}+\frac{1}{\pi r^2}\int_{D^2(r)}w.\qedhere\]
\end{proof}
\begin{lemma}\label{meanvlaue23} Let $w:D^2(1)\to \mathbb{R}$ be a non-negative $C^2$-function such that $-w^2\leq \Delta w$, where $\Delta$ denotes the Laplacian. If
	\[\int_{D^2(1)}w<\frac{\pi}{8},\]
then
\[w(0)\leq\frac{8}{\pi}\int_{D^2(1)}w.\]
\end{lemma}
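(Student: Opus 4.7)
The plan is to run a Hofer-style concentration argument on the weight function $\phi(z):=(1-|z|)^2 w(z)$. Since $\phi$ is continuous on $\overline{D^2(1)}$ and vanishes on the boundary, it attains its maximum at some interior point $z_0\in D^2(1)$. Set $c:=w(z_0)$, $\rho:=(1-|z_0|)/2$, and $s:=c\rho^2$, so that $\phi(z_0)=4s$ and $\phi(0)=w(0)$. Since $\phi(0)\le\phi(z_0)$, it suffices to show $\phi(z_0)\le 8E/\pi$, where $E:=\int_{D^2(1)}w$. (If $c=0$ then $\phi\equiv 0$ and $w(0)=0$, so we may assume $c>0$.)

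The maximality of $\phi$ at $z_0$ yields an automatic sup estimate on $w$: for any $z\in B_\rho(z_0)$ one has $1-|z|\ge\rho$, hence $\rho^2 w(z)\le\phi(z)\le 4\rho^2 c$, i.e.\ $w\le 4c$ on $B_\rho(z_0)$. In particular $\Delta w\ge -w^2\ge -16c^2$ there, so Lemma \ref{meanvlaue1} applies on every concentric sub-disk $B_{\rho'}(z_0)$ with $\rho'\in(0,\rho]$, giving
\[
c \;\le\; \frac{16c^2\rho'^{\,2}}{8} + \frac{1}{\pi\rho'^{\,2}}\int_{B_{\rho'}(z_0)}\!\!w \;\le\; 2c^2\rho'^{\,2} + \frac{E}{\pi\rho'^{\,2}}.
\]
Dividing by $c$ and writing $s':=c\rho'^{\,2}$, this becomes $1\le 2s' + E/(\pi s')$, equivalently $2\pi s'^{\,2} - \pi s' + E \ge 0$. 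Because $E<\pi/8$ the roots $t_\pm:=(1\pm\sqrt{1-8E/\pi})/4$ are real and positive, so each $s'$ is forced into $(0,t_-]\cup[t_+,\infty)$, avoiding the open interval $(t_-,t_+)$.

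The crux of the argument is to exclude the upper branch $s\ge t_+$, and this is where I expect the only subtlety to lie. I would handle it by a connectedness argument: the map $\rho'\mapsto s'=c\rho'^{\,2}$ is continuous on $(0,\rho]$, tends to $0$ as $\rho'\to 0^+$, and equals $s$ at $\rho'=\rho$. Since $s'$ takes arbitrarily small positive values yet can never lie in the open forbidden interval $(t_-,t_+)$, the intermediate value theorem forces $s'\le t_-$ throughout $(0,\rho]$, and in particular $s\le t_-$. The elementary inequality $1-\sqrt{1-x}\le x$ on $[0,1]$, applied with $x=8E/\pi$, then yields $\phi(z_0)=4s\le 1-\sqrt{1-8E/\pi}\le 8E/\pi$, and hence $w(0)\le\phi(z_0)\le 8E/\pi$. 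A single application of Lemma \ref{meanvlaue1} at $\rho'=\rho$ alone would leave only the dichotomy $s\le t_-$ or $s\ge t_+$ and fail to rule out the bad branch; it is precisely the freedom to vary the radius $\rho'$ continuously that converts this dichotomy into the clean bound.
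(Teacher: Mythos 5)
Your proof is correct and follows essentially the same Heinz-trick concentration argument as the paper's, with two cosmetic differences: you maximize the pointwise weight $\phi(z)=(1-|z|)^2w(z)$ rather than the paper's $f(t)=(1-t)^2\max_{\bar D^2(t)}w$ (these have the same supremum, and both yield the bound $w\le 4c$ on a ball of radius $(1-|z_0|)/2$), and you exclude the upper branch of the quadratic inequality via a connectedness/IVT argument in the variable $s'=c\rho'^{\,2}$, whereas the paper tests the specific radius $t=1/\sqrt{4c}$ (the vertex $s'=1/4$ of the parabola) to derive a contradiction and then evaluates at $t=\delta$. Both produce the same constant $8/\pi$, your route using the extra elementary inequality $1-\sqrt{1-x}\le x$ and the paper's by a direct rearrangement.
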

\begin{proof}[Proof of Lemma \ref{meanvlaue23}]
	Let $w:D^2(1)\to \mathbb{R}$ be a function that satisfies  $- w^2\leq \Delta w$ and 
	\[\int_{D^2(1)}w<\frac{\pi}{8}.\]
We use the Heinz trick (cf. {\cite[Page 87]{MacDuff}}) to prove that $w$ is subharmonic up to a quadratic form on some disk $D^2(r)$ contained in $D^2(1)$, then from the mean value inequality we get the estimate
	\[w(0)\leq\frac{8}{\pi r^2}\int_{D^2(r)}w.\]
	
	Define a function $f:[0,1]\to \mathbb{R}$ by 
	\[f(t)=(1-t)^2\max_{z\in \bar{D}^2(t)}w(z).\]
	Note that $f(0)=w(0)$ and $f(1)=0$. Let $t^*\in (0,1)$ and $z^* \in \bar{D}^2(t^*)$  be such that 
	\[f(t^*)=\sup_{t\in [0,1]}f(t), \text{ and } c:=w(z^*)=\sup_{z\in \bar{D}^2(t^*)}w(z). \]
	Let $\delta:=(1-t^*)/2$, and denote by $\bar{D}^2_\delta(z^*)$ the closed disk of radius $\delta$ centered at $z^*$ in $\mathbb{C}$. We can see that 
	\[\sup_{z\in \bar{D}^2_\delta(z^*)}w(z)\leq \sup_{z\in \bar{D}^2_{t^*+\delta}}w(z)\leq \frac{f(t^*+\delta)}{(1-(t^*+\delta))^2}=4 w(z^*).\]
So on the ball $\bar{D}^2_\delta(z^*)$ we have 
	\[\Delta w\geq -w^2=-16c^2.\]
By Lemma \ref{meanvlaue1}, we have 
	\begin{equation}\label{meanvalue3}
		c=w(z^*)\leq 2c^2t^2+\frac{1}{\pi t^2}\int_{D^2(1)}w.
	\end{equation}
for every $0< t\leq \delta$. This implies that $4c\delta^2\leq 1$. To see this, suppose  $4c\delta^2> 1$.  Then $\frac{1}{\sqrt{4c}}<\delta$. Choosing $t=\frac{1}{\sqrt{4c}}<\delta$ in inequality (\ref{meanvalue3}) gives
	\[\frac{\pi}{8}\leq \int_{D^2(1)}w\]
	which is a contradiction to our assumption that 
	\[\frac{\pi}{8}> \int_{D^2(1)}w.\]
So we must have  $4c\delta^2\leq1$. 

For $t=\delta$, the estimate (\ref{meanvalue3}) can be written as 
\[c+\frac{c}{2}(-4c\delta^2)\leq \frac{1}{\pi \delta^2}\int_{D^2(1)}w.
\]
As $-4c\delta^2\geq -1$, we obtain 
\[\frac{c}{2}\leq \frac{1}{\pi \delta^2}\int_{D^2(1)}w.\]
This implies
	\[w(0)=f(0)\leq f(t^*)=(1-t^*)^2c=4\delta^2c\leq \frac{8}{\pi}\int_{D^2(1)}w.\qedhere\]
\end{proof}
\begin{lemma}\label{meanvlaue2} Let $w:D^2(r)\to \mathbb{R}$ be a non-negative $C^2$-function such that $-b w^2\leq \Delta w$ for some constant $b>0$, where $\Delta$ denotes the Laplacian. If
	\[\int_{D^2(r)}w<\frac{\pi}{8b},\]
then
\[w(0)\leq\frac{8}{\pi r^2}\int_{D^2(r)}w.\]
\end{lemma}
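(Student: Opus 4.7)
The plan is to reduce Lemma \ref{meanvlaue2} to the already-established Lemma \ref{meanvlaue23} by a rescaling that simultaneously absorbs the radius $r$ and the constant $b$. The key observation is that both the differential inequality $-b w^2 \leq \Delta w$ and the hypothesis $\int_{D^2(r)}w < \pi/(8b)$ have a natural scaling: replacing $w$ by a constant multiple of $w(r\,\cdot)$ rescales the Laplacian by $r^2$ and the pre-factor in front of the $w^2$ term by the same constant. So a single one-parameter choice will bring everything into the normalized form of Lemma \ref{meanvlaue23}.

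Concretely, first I would define $\tilde w : D^2(1) \to \mathbb{R}$ by
\[
\tilde w(z) := br^2\, w(rz).
\]
Then $\tilde w \geq 0$ and $\tilde w \in C^2(D^2(1))$. A direct chain rule computation gives $\Delta \tilde w(z) = br^4\, \Delta w(rz)$, while $\tilde w(z)^2 = b^2r^4\, w(rz)^2$. Using the hypothesis $-bw^2 \leq \Delta w$ at the point $rz$ then yields
\[
-\tilde w(z)^2 \;=\; -b^2 r^4\, w(rz)^2 \;\leq\; b r^4\, \Delta w(rz) \;=\; \Delta \tilde w(z),
\]
so $\tilde w$ satisfies the differential inequality of Lemma \ref{meanvlaue23}.

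Next I would check the integral hypothesis. A change of variables $y = rz$ gives
\[
\int_{D^2(1)} \tilde w(z)\, dz \;=\; br^2 \int_{D^2(1)} w(rz)\, dz \;=\; b \int_{D^2(r)} w(y)\, dy \;<\; b \cdot \frac{\pi}{8b} \;=\; \frac{\pi}{8},
\]
which is exactly the integral bound required by Lemma \ref{meanvlaue23}. Applying that lemma to $\tilde w$ therefore gives
\[
\tilde w(0) \;\leq\; \frac{8}{\pi} \int_{D^2(1)} \tilde w \;=\; \frac{8b}{\pi} \int_{D^2(r)} w.
\]
Since $\tilde w(0) = br^2\, w(0)$, dividing both sides by $br^2$ yields the desired inequality $w(0) \leq \frac{8}{\pi r^2} \int_{D^2(r)} w$.

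There is no real obstacle here beyond bookkeeping the scaling factors; the substantive analytic content, namely the Heinz trick for upgrading the Lemma \ref{meanvlaue1} type estimate to the nonlinear case, is already encapsulated in Lemma \ref{meanvlaue23}. The only point requiring a little care is choosing the rescaling factor $\alpha = br^2$ rather than something simpler like $1$ or $b$: one needs to match the $w^2$-coefficient on the right-hand side of the inequality to $1$ after differentiation, and the factor $br^2$ is the unique choice (up to sign) that accomplishes this while keeping the integral bound compatible.
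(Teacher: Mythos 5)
Your proof is correct and matches the paper's own argument exactly: the paper also defines $\bar{w}(s,t) := br^2\, w(rs,rt)$ on $D^2(1)$, checks it satisfies the hypotheses of Lemma~\ref{meanvlaue23}, and unwinds the resulting estimate by the same change of variables. You have merely spelled out the chain-rule and change-of-variables bookkeeping that the paper leaves implicit.
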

\begin{proof}[Proof of Lemma \ref{meanvlaue2}]
Let $w:D^2(r)\to \mathbb{R}$ be a function that satisfies the hypothesis of Lemma  \ref{meanvlaue2}. Then the function $\bar{w}:D^2(1)\to \mathbb{R}$ defined by $\bar{w}(s,t):=br^2w(rs,rt)$ satisfies the hypothesis of Lemma \ref{meanvlaue23}. So 
\[w(0)=\frac{\bar{w}(0)}{br^2}\leq \frac{8}{\pi br^2}\int_{D^2(1)}\bar{w}= \frac{8}{\pi r^2}\int_{D^2(r)}w.\qedhere\]
\end{proof}
\begin{proof}[Proof of Theorem \ref{meanvalue}]
By Lemma $\ref{meanvlaue2}$, it is enough to prove that for any $J$-holomorphic curve $u:D^2(r)\to X $ with $u(D^2(r))\subseteq B_l(x)$ for some $x\in X$ and $l >0$ the function $\phi: D^2(r)\to (0,\infty)$  defined by 
	\[\phi(z)=\frac{1}{2}\|du(z)\|_g^2\]
satisfies the inequality
	\[\Delta \phi\geq -C_g(x,l) \phi^2 \]
	for some constant $C_g(x,l)>0$ which is continuous with respect to $x$ and $l$.	
	Let $z=s+i t$ denote the standard coordinates on $\mathbb{C}$. We denote by $u_t$ and $u_s$ the $t$ and $s$ partial derivatives of $u$ at  $z\in D^2(r)$, respectively. Let $\nabla$ be the Levi-Civita connection of $g$. Since $u$ is $J$-holomorphic and $J$ preserves $g$, we have
	\[\|u_s\|_g^2=\|u_t\|_g^2.\] 
	Note that
	\[\phi(z)=\frac{1}{2}\|du(z)\|_g^2=\|u_t\|_g^2=\|u_s\|_g^2.\]
	
	Since the Levi-Civita connection $\nabla$ of $g$ is $g$-compatible and torsion-free, we  have
	\[\frac{1}{2}\nabla_{tt}\|u_s\|_g^2=\|\nabla_t u_s\|_g^2+\langle \nabla_{tt}u_s,u_s \rangle_g=\|\nabla_t u_s\|_g^2+\langle \nabla_{t}\nabla_s u_t,u_s \rangle_g.  \]
	Similarly, 
	\[\frac{1}{2}\nabla_{ss}\|u_t\|_g^2=\|\nabla_s u_t\|_g^2+\langle \nabla_{s}\nabla_t u_s,u_t \rangle_g.\]
	Thus,
	\begin{equation}\label{cry}
		\frac{1}{2}\Delta\phi=\frac{1}{2}(\nabla_{ss}\|u_t\|_g^2+\nabla_{tt}\|u_s\|_g^2)=\|\nabla_t u_s\|_g^2+\|\nabla_s u_t\|_g^2+\langle \nabla_{s}\nabla_t u_s,u_t \rangle_g+\langle \nabla_{t}\nabla_s u_t,u_s \rangle_g.
	\end{equation}
	Since $u$ is $J$-holomorphic, $u_s=-Ju_t$. Therefore,
	\begin{equation*}
		\begin{split}
			\langle \nabla_s\nabla_t u_s,u_t\rangle_g&=-\langle \nabla_s\nabla_t Ju_t,u_t\rangle_g\\
			&=-\langle J \nabla_s\nabla_t u_t,u_t\rangle_g-\langle (\nabla_sJ)\nabla_t u_t,u_t\rangle_g-\langle \nabla_s((\nabla_t J)u_t),u_t\rangle_g\\
			&=-\langle  \nabla_s\nabla_t u_t,u_s\rangle_g-\langle (\nabla_sJ)\nabla_t u_t,u_t\rangle_g-\langle \nabla_s((\nabla_t J)u_t),u_t\rangle_g\\
		\end{split}
	\end{equation*}
	Putting this in equation (\ref{cry}), we have 
	\[\frac{1}{2}\Delta\phi=\|\nabla_s u_t\|_g^2+\|\nabla_t u_s\|_g^2+\langle R_g(u_t,u_s)u_t, u_s\rangle_g+\langle (\nabla_sJ)\nabla_t u_t,u_t\rangle_g-\langle \nabla_s((\nabla_t J)u_t),u_t\rangle_g,\]
where $R_g$ is the curvature tensor of the connection $\nabla$. Since $u(D^2(r))\in B_l(x)$, we observe that
	
	\[\| \langle R_g(u_t,u_s)u_t, u_s\rangle\|\leq  C_g(x,J)\|u_t\|_g^2\|u_s\|_g^2,\]
	for some constant $C_g(x,l)>0$.
	Also	
	\begin{equation*}
		\begin{split}
			\|\langle (\nabla_sJ)\nabla_t u_t,u_t\rangle_g\|&\leq  C_{g,J}(x,l)\|u_t\|_g\|u_s\|_g\|\nabla_t(Ju_s)\|_g\\
			&\leq C_{g,J}(x,l)\|u_t\|_g\|u_s\|_g(\|u_t\|_g\|u_s\|_g+\|\nabla_tu_s\|_g)\\
			&\leq(C_{g,J}(x,l)+C_{g,J}^2(x,l))\|u_t\|^2\|u_s\|^2+\|\nabla_tu_s\|_g^2.\\
		\end{split}
	\end{equation*}
	\begin{equation*}
		\begin{split}
			\|\langle \nabla_s((\nabla_t J)u_t),u_t\rangle_g\|&\leq  C_{g,J}(x,l)\|u_t\|_g^2(\|u_t\|_g\|u_s\|_g+\|\nabla_tu_s\|_g)\\
			&\leq C_{g,J}(x,l)\|u_t\|_g^3\|u_s\|_g+ C_{g,J}^2(x,l)\|u_t\|_g^4+\|\nabla_tu_s\|_g^2.\\
		\end{split}
	\end{equation*}
This gives
	\[\frac{1}{2}\Delta\phi\geq -C_g(x,r)(\|u_s\|_g^2\|u_t\|_g^2+\|u_s\|_g\|u_t\|_g^3+\|u_t\|_g^4)\geq-3C_g(x,l)\phi^2,\]
for some constant $C_g(x,l)>0$.
\end{proof}
\section{Proof of Theorem \ref{compact1}}\label{mainproof}
\subsection{Proof of Theorem \ref{compact1} via mean value inequality}\label{proof-mean-value inequality}
In this subsection, we present a proof of Theorem  \ref{compact1} based on the mean value inequality described in Theorem \ref{meanvaluek}. We deduce the proof from the following theorem.
\begin{theorem}\label{compact}	Let $(M, \omega)$ be a closed symplectic manifold of dimension $2n-2\geq 2$ with vanishing second homotopy group, i.e., $\pi_2 (M)=0$. Let $J\in \mathcal{J}_c(\mathbb{CP}^1\times M, \omega_{\mathrm{FS}}\oplus\omega)$  and consider the moduli space
	\begin{equation}\label{Moduli34}
	\mathcal{M}(J,[\mathbb{CP}^1\times\{\operatorname{pt}\}]):=\left\{
	\begin{array}{l}
		u:(\mathbb{CP}^1,i)\to (\mathbb{CP}^1\times M,J),\\
		du\circ i=J\circ du ,\\
		u_*[\mathbb{CP}^1]=[\mathbb{CP}^1\times\{\operatorname{pt}\}]\in H_2(\mathbb{CP}^1\times M,\mathbb{Z}).
\end{array}
	\right\}\bigg/\sim
\end{equation}
where $u_1 \sim u_2$ if and only if $u_1=u_2\circ\varphi$ for some $\varphi \in \operatorname{Aut}(\mathbb{CP}^1,i)$. Pick a Riemannian metric $g$ on $\mathbb{CP}^1\times M$. Each $[u]\in \mathcal{M}(J,[\mathbb{CP}^1\times\{\operatorname{pt}\}])$ admits a representative $v$ such that
	\[\|dv(z)\|_{g}\leq C_{J,g},\]
	for all $z\in \mathbb{CP}^1$ and some constant $C_{J,g}>0$ that only depends on $(g,J)$. Moreover, the constant $C_{J,g}$ varies continuously with $J$ and $g$ in the $C^\infty$-topology.
\end{theorem}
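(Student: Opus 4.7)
The plan is to execute in detail the four-step outline from Subsection \ref{outline}. The starting observation is algebraic-topological: because $\pi_2(M) = 0$, any smooth sphere $u : \mathbb{CP}^1 \to \mathbb{CP}^1 \times M$ is homologous (via projection onto $\mathbb{CP}^1$) to an integer multiple of $[\mathbb{CP}^1 \times \{\mathrm{pt}\}]$, so $E(u) = m\pi$ for some $m \in \mathbb{Z}$. For a nonconstant $J$-holomorphic representative of the class $[\mathbb{CP}^1 \times \{\mathrm{pt}\}]$, Corollary \ref{comparison0} forces $m > 0$ while the homology class forces $m = 1$. Thus every curve $u$ in the moduli space has $E(u) = \pi$ exactly, and this is the minimal positive area attained by any nonconstant $J$-holomorphic sphere for any $(\omega_{\mathrm{FS}} \oplus \omega)$-compatible $J$.

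Next I would normalize the area distribution of a given $[u]$ by reparametrizing with the Möbius composition $g_1 \circ g_2 \circ g_3$ from the outline. Choosing $\lambda_1 > 0$ redistributes area between $\mathbb{D}^2$ and its complement; choosing $\lambda_2 \in \mathbb{R}$ redistributes between the right and left half-planes; choosing $\lambda_3 \in i\mathbb{R}$ redistributes between the upper and lower half-planes. Each parameter is solved for by the intermediate value theorem applied to a continuous function ranging over $[0, \pi]$; since the parameters act on essentially disjoint aspects of the area distribution (a dilation, followed by two translations of disjoint axis type), one can solve sequentially for the normalization $E(v|_{\mathbb{D}^2}) = E(v|_{\mathrm{Re}(z) \geq 0}) = E(v|_{\mathrm{Im}(z) \geq 0}) = \pi/2$ for $v := u \circ g_1 \circ g_2 \circ g_3$.

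The main obstacle is the third step: establishing that for every $z \in \mathbb{CP}^1$ there is some $r_v \in (\pi/(24c), \pi/24)$ such that the Fubini--Study disk $B_{\mathrm{FS}}(z, r_v)$ carries at most $2k\pi^2 / \log c$ units of $v^*(\omega_{\mathrm{FS}} \oplus \omega)$-area, provided $c$ exceeds an explicit threshold involving $\mathrm{injrad}(\mathbb{CP}^1\times M, g_0)$, where $g_0 := (\omega_{\mathrm{FS}} \oplus \omega)(\cdot, J \cdot)$. The argument will combine a pigeonhole over a geometric sequence of radii $r_i \sim (\pi/24)\, e^{-i}$ with $i \lesssim \log c$, using the total area bound $\pi$ from Step 1 to locate an annulus of small area. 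The equidistribution property from Step 2 provides the leverage needed to rule out concentration of area near an arbitrary point $z$: if the area in every small disk around $z$ were large, the union of three disks around the symmetry centers of the reparametrization would carry more than $\pi$ total area, contradicting minimality. The constant $k$ will emerge from comparing the Fubini--Study metric with the Euclidean model on these disks, and the threshold $e^{18\pi^2/\mathrm{injrad}(g_0)^2}$ ensures the disks stay within the radius of local conformal control.

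Finally, I would choose
\[
c = \max\bigl\{e^{4k\pi},\; e^{18\pi^2/\mathrm{injrad}(\mathbb{CP}^1\times M, g_0)^2},\; e^{2k\pi^2 / c_{J, g_0}}\bigr\},
\]
where $c_{J, g_0}$ is the threshold of Theorem \ref{meanvaluek}. The third condition forces $\int_{B_{\mathrm{FS}}(z, r_v)} \|dv\|_{g_0}^2 = \int_{B_{\mathrm{FS}}(z, r_v)} v^*(\omega_{\mathrm{FS}} \oplus \omega) < c_{J, g_0}$ by Corollary \ref{comparison0}, so Theorem \ref{meanvaluek} (with $c_g = 1$, since $J$ preserves $g_0$) yields $\|dv(z)\|_{g_0}^2 \leq 16/(\pi r_v^2) \cdot \pi \leq 16\,(24c/\pi)^2$, hence $\|dv(z)\|_{g_0} \leq 96c/\pi$ uniformly in $z$. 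For an arbitrary Riemannian metric $g$, compactness of $\mathbb{CP}^1 \times M$ supplies a continuous comparison constant $c_g$ with $\|\cdot\|_g \leq c_g \|\cdot\|_{g_0}$, and setting $C_{J, g} := 96\, c_g c / \pi$ gives the desired bound. Continuity of $C_{J, g}$ in the $C^\infty$-topology on $J$ and $g$ follows because each ingredient in its definition --- $\mathrm{injrad}(g_0)$, the constant $k$, the mean-value threshold $c_{J, g_0}$, and the metric comparison $c_g$ --- depends continuously on its data.
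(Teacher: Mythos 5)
Your Steps 1, 2, and 4 match the paper's argument closely, and the final assembly of constants is correct. The genuine gap is in Step 3, which you rightly identify as the main obstacle but whose actual content you leave unresolved. The bound you need, $\int_{B_{\mathrm{FS}}(z,r_v)} v^*(\omega_{\mathrm{FS}}\oplus\omega) \le 2k\pi^2/\log c$, is a bound on the \emph{area inside} the small disk, and neither the pigeonhole over radii nor the equidistribution of Step~2 yields this directly. The pigeonhole (equivalently, the length-area inequality from conformality and Cauchy--Schwarz, which is what the paper actually uses) only locates a radius $r_v\in(\pi/24c,\pi/24)$ at which the \emph{boundary circle} $v(\partial B_{\mathrm{FS}}(z,r_v))$ has small length $l^2 \le 2\pi^2/\log c$, and equidistribution only caps the area in $B_{\mathrm{FS}}(z,\pi/24)$ at $\pi/2$, which is far too large to feed into Theorem~\ref{meanvaluek}. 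Your sentence about ``three disks around the symmetry centers'' carrying more than $\pi$ total area is not a correct contradiction and does not close this gap.

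What is missing is the paper's cap-off plus area-quantization argument: because the boundary circle is short (here is where the threshold $e^{18\pi^2/\mathrm{injrad}(g_0)^2}$ enters, guaranteeing $l(\gamma) < \mathrm{injrad}/2$ so the loop sits inside a geodesic ball), one constructs an explicit cap $\Phi:\mathbb{CP}^1\setminus B_{\mathrm{FS}}(z,r_v)\to\mathbb{CP}^1\times M$ using radial geodesics from the exponential map, with $\bigl|\int \Phi^*(\omega_{\mathrm{FS}}\oplus\omega)\bigr|\le k_3\, l^2 < \pi/2$. Gluing $v|_{B_{\mathrm{FS}}(z,r_v)}$ to $\Phi$ produces a sphere, so by Step~1 its total symplectic area is an integer multiple of $\pi$; since the interior piece has area $<\pi/2$ (from equidistribution) and the cap has area $<\pi/2$, the total lies in $(-\pi,\pi)$ and is therefore zero. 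Hence the area inside equals minus the cap area, which is $\le k_3\,l^2 \le 2k_3\pi^2/\log c$. Without this step your proof does not establish the inequality your own Step~4 depends on, so the proposal as written has a real hole at its center.
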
 
\begin{proof}[Proof of Theorem \ref{compact1}]
Let $\{J_t\}_{t\in [0,1]}\subset \mathcal{J}_c(\mathbb{CP}^1\times M,\omega_{\mathrm{FS}}\oplus \omega)$ be a continuous path of $(\omega_{\mathrm{FS}}\oplus \omega)$-compatible almost complex structures. For each $t\in [0,1]$, by Theorem \ref{compact}, there exists $C_{J_t,g}>0$ such that every $[u]\in \mathcal{M}(J_t,[\mathbb{CP}^1\times\{\operatorname{pt}\}])$ admits a representative $v$ satisfying 
	\[\|dv(z)\|_{g}\leq C_{J_t,g}\]
	for all $z\in \mathbb{CP}^1$. The constant $C_{J_t,g}>0$ only depends on $(g,J_t)$ and varies continuously with $t\in [0,1]$. Since the interval $[0,1]$ is compact, we can choose $C_{J_t,g}$ to be uniform in $t$. 
	
The topology on the moduli space in Theorem \ref{compact1} is metrizable as a special case of {\cite[Theorem 5.6.6(ii)]{MacDuff}}. So compactness, in this case, is equivalent to sequential compactness. Given a sequence $\{[u_k]\}$ in the moduli space in Theorem \ref{compact1}, there exist a sequence $\{t_k\}$ in $[0,1]$ and a corresponding sequence  $\{J_{t_k}\}$ in $ \{J_t\}_{t\in [0,1]}$ such that $u_k$ is $J_{t_k}$-holomorphic. Since $[0,1]$ is compact, $\{t_k\}$ has a subsequence, still denoted by $\{t_k\}$, that converges to some $t_{\mathrm{lim}} \in [0,1]$. This implies the sequence $\{J_{t_k}\}$ $C^\infty$-converges to $J_{t_{\mathrm{lim}}}\in \{J_t\}_{t\in [0,1]}$ 
	because the family $\{J_t\}_{t\in [0,1]}$ is continuous in the $C^\infty$-topology. Moreover, $\{u_k\}$ has a uniform $C^0$-bound because the target manifold $\mathbb{CP}^1\times M$ is closed. Also, by the above discussion,  there exists $C>0$ such that (after re-parametrizing  $u_k$) we have
\[\|du_k(z)\|_{g}\leq C,\]
for all $z\in \mathbb{CP}^1$, $k\in \mathbb{Z}_{\geq 1}$. This $C^1$-bound implies a $C^\infty$-bound on the sequence $\{u_k\}$ by {\cite[Sec. 2.2.3]{Casim2014}}.  By Arzel$\grave{a}$-Ascoli Theorem, $u_k$ has a subsequence that $C^\infty$-converges to a $J_{t_{\mathrm{lim}}}$-holomorphic map $u:\mathbb{CP}^1\to \mathbb{CP}^1\times M$. Using $C^0$-convergence, the limit $u$ represents the class $[\mathbb{CP}^1\times\{\operatorname{pt}\}]$.
\end{proof}

\begin{proof}[Proof of Theorem \ref{compact}] Define $g_1$, $g_2$ and $g_3$ $\in  \operatorname{Aut}(\mathbb{CP}^1,i)$ by
	\[ \left\{
	\begin{array}{ll}
		g_1(z)=\lambda_{1} z,&\\
		g_2(z)=\frac{z+\lambda_{2}}{z\lambda_2+1},&\\
		g_3(z)=\frac{z+\lambda_{3}}{-\lambda_{3}z+1},&\\
	\end{array}
	\right. \]
	where $\lambda_1, \lambda_2, \lambda_3 \in \mathbb{C}$. Let $\pi_1:\mathbb{CP}^1\times M\to \mathbb{CP}^1$ and $\pi_2:\mathbb{CP}^1\times M\to M$ be the canonical projections. Observe that for any smooth map $u:\mathbb{CP}^1\to \mathbb{CP}^1\times M$ one has
	\begin{align*}
	E(u):=\int_{\mathbb{CP}^1} u^{*}(\omega_{\mathrm{FS}}\oplus \omega)&=\langle[u],\omega_{\mathrm{FS}}\oplus \omega\rangle
	\\
	&=\langle[\pi_1\circ u]+[\pi_2\circ u],\omega_{\mathrm{FS}}\oplus \omega\rangle\\
	&=\int_{\mathbb{CP}^1} (\pi_1\circ u)^{*}\omega_{\mathrm{FS}}+\int_{\mathbb{CP}^1} (\pi_2\circ u)^{*}\omega.	
\end{align*}
Since $\pi_{2}(M)=0$, $\int_{\mathbb{CP}^1} (\pi_2\circ u)^{*}\omega=0$. Also $[\pi_2\circ u]=m[\mathbb{CP}^1]$ where $m$ is the mapping degree of $\pi_1\circ u$ which is an integer. Therefore,
\[E(u)=\int_{\mathbb{CP}^1} (\pi_1\circ u)^{*}\omega_{\mathrm{FS}}=m\int_{\mathbb{CP}^1}\omega_{\mathrm{FS}}.\]
Also 
\[\int_{\mathbb{CP}^1}\omega_{\mathrm{FS}}=\int_{\mathbb{C}}\frac{dt \wedge ds}{(1+t^2+s^2)^2}=\lim_{r\to \infty}\int_{0}^{r}\int_{0}^{2\pi}\frac{\rho d\rho d\theta}{(1+\rho^2)^2}=\pi.\]
	So 
\[E(u)=m\int_{\mathbb{CP}^1}\omega_{\mathrm{FS}}=m\pi.\]
 This means the symplectic area of any smooth map $u:\mathbb{CP}^1\to \mathbb{CP}^1\times M$  is an integer multiple of $\pi$. In particular, any smooth map $u:\mathbb{CP}^1\to \mathbb{CP}^1\times M$ with symplectic area in the open interval $(-\pi,\pi)$  must have zero symplectic area.
	
	We have $m=1$ if $u$ represents the class $[\mathbb{CP}^1\times \{\operatorname{pt}\}]$. So every $u \in \mathcal{M}(J,[\mathbb{CP}^1\times\{\operatorname{pt}\}])$ has symplectic area equal to $\pi$, i.e.,  $E(u)=\pi$. Set $v:=u \circ g_1 \circ g_2 \circ g_3$  and choose $\lambda_1, \lambda_2$ purely real and $\lambda_3$ purely imaginary such that
	\[ \left\{
	\begin{array}{ll}
		E(v|_{D^2(1)})=\pi/2,&\\
		E(v|_{\operatorname{Re}(z)\geq 0})=\pi/2,&\\
		E(v|_{\operatorname{Imag}(z)\geq 0})=\pi/2,&\\
	\end{array}
	\right. \]
where $D^2(1)$ is the unit disk centered at the origin in $\mathbb{C}$ which corresponds to the lower hemisphere on $\mathbb{CP}^1$ under the stereographic projection. 
	
The point of the above rescaling is that we want to make the symplectic area distribution of $u$ uniform over $\mathbb{CP}^1$. After rescaling with $g_1$, which fixes the centers $0$ and $\infty$ of the lower and upper hemispheres, respectively, $u$ may have high symplectic area concentration along the equator. To handle this, we rescale with $g_2$ that fixes the centers $-1$ and $1$ of the left and right hemispheres, respectively. However, it is not enough; we may still have a high symplectic area at $i$ and $-i$. Therefore, we rescale by $g_3$. On each of the six hemispheres on $\mathbb{CP}^1$, the rescaled map $v:=u \circ g_1 \circ g_2 \circ g_3$ has symplectic area equal to $\pi/2$.
	
A few words on why such $g_1,g_2, g_3$ exist: the $\operatorname{Aut}(\mathbb{CP}^1,i)$ is six-dimensional as a smooth manifold. Roughly speaking, three dimensions out of six are taken by the rotations of $\mathbb{CP}^1$, which are useless for the type of rescaling we want. What remains is three dimensional, and hence one has the freedom of choosing up to three automorphisms, $g_1,g_2,g_3$ with which the map $u \circ g_1 \circ g_2 \circ g_3$ attains the above symplectic area distribution. 

For $z\in \mathbb{CP}^1$, denote the Fubini-Study disk of radius $\pi/24$ centered at $z$ by $B_\mathrm{FS}(z,\pi/24)$. Next we prove that for any given $c>1$ (independent of $v$) we have
\begin{equation*}
	l^2(v(\partial B_{\mathrm{FS}}(z,r_v)))\leq \frac{2\pi^2}{\log(c)},
\end{equation*}
for some $r_{v}\in (\pi/24c,\pi/24)$ that depends on $v$. Here $l$ denotes the length of the loop $v(\partial B_{\mathrm{FS}}(z,r_v))$ in $\mathbb{CP}^1\times M$ with respect to the metric $g_0=(\omega_{\mathrm{FS}}\oplus \omega)(\cdot,J\cdot)$. 

Let $z\in \mathbb{CP}^1$ and think of $S^1\times(0,\pi/24)$ conformally embedded annulus centered at $z$ so that it lies on the spherical disk $B_{\mathrm{FS}}(z, \pi/24)$ with $S^1\times \{\pi/24\}$ mapped to the boundary of $B_{\mathrm{FS}}(z, \pi/24)$. We get a map $v:S^1\times(0,\pi/24)\to \mathbb{CP}^1\times M$ which is $J$-holomorphic. By Corollary \ref{comparison0}, the symplectic area of $v|_{S^1\times(0,r)}$ for any $r\in (0,\pi/24)$  is given by
	\[A(r)=\int_{0}^{r}\int_0^{2\pi}\|dv\|_{g_0}^2\rho\, d\rho\, dt.\]
	Differentiating this with respect to $r$ gives
	\[A'(r)=\int_0^{2\pi}\|dv\|_{g_0}^2r  dt.\]
	By Cauchy--Schwarz inequality, we have 
	\[\bigg(\int_{0}^{2\pi}\|dv\|_{g_0}rdt\bigg)^2\leq \bigg(\int_{0}^{2\pi} r^2dt\bigg)  \bigg(\int_{0}^{2\pi}\|dv\|_{g_0}^2dt\bigg). \]
	This gives
	\[\frac{1}{2\pi r}\bigg(\int_{0}^{2\pi}\|dv\|_{g_0}rdt\bigg)^2\leq  \bigg(\int_{0}^{2\pi}\|dv\|_{g_0}^2rdt\bigg). \]
	Therefore,
	\[A'(r)\geq\frac{1}{2\pi r}\bigg(\int_0^{2\pi}\|dv\|_{g_0} r dt\bigg)^2= \frac{1}{2\pi r} l^2(v|_{S^1\times\{r\}}).\]
	Let $c>1$, integrating from $\pi/24c$ to $\pi/24$ we get
	\begin{equation*}
		\begin{split}
			A(\frac{\pi}{24})-A(\frac{\pi}{24c})&\geq \int_{\pi/24c}^{\pi/24}\frac{1}{2\pi r}l^2(v|_{S^1\times \{r\}})dr\\
			&\geq \frac{1}{2\pi}\min _{\pi/24c\leq r\leq\pi/24}l^2(v|_{S^1\times \{r\}})\int_{\pi/24c}^{\pi/24}\frac{1}{r}dr\\
			&=\frac{1}{2\pi}\log(c)\min _{\pi/24c\leq r\leq\pi/24}l^2(v|_{S^1\times \{r\}})\\
			&=\frac{\log(c)}{2\pi}l^2(v|_{S^1\times\{r_v\}}),\\
		\end{split}
	\end{equation*}	
for some $r_v\in (\frac{\pi}{24c},\frac{\pi}{24})$ that depends on the map $v$. This estimate  implies that for every $J$-holomorphic sphere $u\in \mathcal{M}(J,[\mathbb{CP}^1\times\{\operatorname{pt}\}])$, the rescaled version $v:=u \circ g_1 \circ g_2 \circ g_3$ satisfies
	\begin{equation}\label{in}
		l^2(v(\partial B_{\mathrm{FS}}(z,r_v)))\leq \frac{2\pi^2}{\log(c)},
	\end{equation}
for some $r_v\in (\frac{\pi}{24c},\frac{\pi}{24})$ that depends on the map $v$. Moreover, $c>1$ is arbitrary and does not depend on $v$.

Next we prove that for any $c\geq \max\{e^{4k_3 \pi}, e^{\frac{18\pi^2}{\operatorname{injrad}(\mathbb{CP}^1\times M,g_0)^2}} \}$, where $k_3>0$ is a constant that only depends on the metric $g_0$, we have
\[		\int_{B_{\mathrm{FS}}(z,r_v)}v^{*}(\omega_{\mathrm{FS}}\oplus\omega)\leq k_3 l^2(v|_{\partial B_{\mathrm{FS}}(z,r_v)}),
\]
where $r_v\in (\frac{\pi}{24c},\frac{\pi}{24})$ is defined by (\ref{in}). We start by proving  that $-v|_{\partial B_{\mathrm{FS}}(z,r_v)}:=v|_{\partial(\mathbb{CP}^1\setminus B_{\mathrm{FS}}(z,r_v))}$ admits a smooth extension $\Phi:\mathbb{CP}^1\setminus B_{\mathrm{FS}}(z,r_v)\to \mathbb{CP}^1\times M$ such that
	\[\int_{B_{\mathrm{FS}}(z,r_v)}v^{*}(\omega_{\mathrm{FS}}\oplus\omega)+\int_{\mathbb{CP}^1\setminus B_{\mathrm{FS}}(z,r_v)}\Phi^{*}(\omega_{\mathrm{FS}}\oplus\omega) \in (-\pi,\pi).\]
	
	Since $\mathbb{CP}^1\times M$ is compact, its injectivity radius $\operatorname{injrad}(\mathbb{CP}^1\times M,g_0:=(\omega_{\mathrm{FS}}\oplus \omega)(\cdot,J\cdot))$ is positive by Proposition \ref{inj}. Let $D^2(1)$ be the unit Euclidean $2$-disk centered at the origin. Choose an orientation-preserving diffeomorphism $\phi:D^2(1)\to \mathbb{CP}^1\setminus B_{\mathrm{FS}}(z,r_v)$. Let $\gamma:\partial D^2(1)\to \mathbb{CP}^1\times M$ denote the loop $v \circ \phi $. For $c\geq e^{\frac{18\pi^2}{\operatorname{injrad}(\mathbb{CP}^1\times M,g_0)^2}}$, the estimate  (\ref{in}) implies $l(\gamma)<\operatorname{injrad}(\mathbb{CP}^1\times M,g_0)/2$, so the image of the loop $v \circ \phi$ lies in some geodesic ball for every $v \in \mathcal{M}(J,[\mathbb{CP}^1\times\{\operatorname{pt}\}])$.
	
Define $\hat{\Phi}:D^2(1)\to \mathbb{CP}^1\times M$ by 
	\[\hat{\Phi}(re^{i\theta})=\exp_{\gamma(0)}(r\xi(\theta))\]
where $\xi(\theta)\in T_{\gamma(0)}\mathbb{CP}^1\times M$ is defined by 
	\[\exp_{\gamma(0)}(\xi(\theta))=\gamma(\theta).\]
	The map $\Phi:=\hat{\Phi}\circ \phi^{-1}:\mathbb{CP}^1\setminus B_{\mathrm{FS}}(z,r_v)\to \mathbb{CP}^1\times M$ is clearly a smooth extension of $v|_{\partial(\mathbb{CP}^1\setminus B_{\mathrm{FS}}(z,r_v))}$.
Moreover, observe that 
	\[\bigg|\frac{\partial \hat{\Phi}}{\partial r}\bigg|=\big|\xi(\theta)\big|=d(\gamma(0),\gamma(\theta))\leq l(\gamma).\]
Additionally
	\[\bigg|\frac{\partial \hat{\Phi}}{\partial \theta}\bigg|\leq k_1|\xi'(\theta)|\leq k_2|\gamma'|.\]
	Here, the constants $k_1$ and $k_2$ only depend on the metric $g_0$ on $\mathbb{CP}^1\times M$ and vary continuously with it in $C^\infty$-topology.
	This gives
\[\bigg|\int_{D^2(1)}\hat{\Phi}^{*}(\omega_{\mathrm{FS}}\oplus\omega)\bigg|=\bigg|\int_{0}^{2\pi}\int_{0}^{1}(\omega_{\mathrm{FS}}\oplus\omega)\bigg(\frac{\partial \hat{\Phi}}{\partial \theta},\frac{\partial \hat{\Phi}}{\partial r}\bigg)dr\, d\theta\bigg|\leq k_3 l^2(\hat{\Phi}|_{\partial D^2(1)}).\]
	Here, the constant $k_3>0$ only depends on $g_0$ and varies continuously with the metric $g_0$ in  the $C^\infty$-topology. We get 
	\[\bigg|\int_{\mathbb{CP}^1\setminus B_{\mathrm{FS}}(z,r_v)}\Phi^{*}(\omega_{\mathrm{FS}}\oplus\omega)\bigg|=\bigg|\int_{D^2(1)}\hat{\Phi}^{*}(\omega_{\mathrm{FS}}\oplus\omega)\bigg|\leq k_3 l^2(\hat{\Phi}|_{\partial D^2(1)})=k_3 l^2(v|_{\partial B_{\mathrm{FS}}(z,r_v)}).\]
We conclude that 
\[\bigg|\int_{\mathbb{CP}^1\setminus B_{\mathrm{FS}}(z,r_v)}\Phi^{*}(\omega_{\mathrm{FS}}\oplus\omega)\bigg|\leq k_3 l^2(v|_{\partial B_{\mathrm{FS}}(z,r_v)}).\]
For $c\geq e^{4k_3 \pi}$ in (\ref{in}) we get
	\[\bigg|\int_{\mathbb{CP}^1\setminus B_{\mathrm{FS}}(z,r_v)}\Phi^{*}(\omega_{\mathrm{FS}}\oplus\omega)\bigg|\leq k_3l^2(v|_{\partial B_{\mathrm{FS}}(z,r_v)})<\frac{\pi}{2}.\]
The Fubini-Study ball $B_{\mathrm{FS}}(z,\pi/24)$ of radius $\pi/24$ centered at $z$ lies in one of the six hemisphere for any $z\in \mathbb{CP}^1$. By our rescaling above, the symplectic area of $v$ on the ball $B_{\mathrm{FS}}(z,\pi/24)$ is strictly less than $\pi/2$. Since $r_v\in (\frac{\pi}{24c},\frac{\pi}{24})$, the symplectic area of $v|_{B_{\mathrm{FS}}(z,r_v)}$ is strictly smaller than $\pi/2$. Thus,  $v|_{B_{FS}(z,r_v)} \cup \Phi $ gives a sphere with symplectic area in the interval  $(-\pi,\pi)$ and hence zero by the observation we made in the beginning. Thus 
\[		\int_{B_{\mathrm{FS}}(z,r_v)}v^{*}(\omega_{\mathrm{FS}}\oplus\omega)=\bigg|\int_{\mathbb{CP}^1\setminus B_{\mathrm{FS}}(z,r_v)}\Phi^{*}(\omega_{\mathrm{FS}}\oplus\omega) \bigg|\leq k_3 l^2(v|_{\partial B_{\mathrm{FS}}(z,r_v)}).
\]
Combining this with (\ref{in}), we obtain that for any $c\geq \max\{e^{4k_3 \pi}, e^{\frac{18\pi^2}{\operatorname{injrad}(\mathbb{CP}^1\times M,g_0)^2}} \}$ we have 
\begin{equation}\label{cut}
	\int_{B_{\mathrm{FS}}(z,r_v)}v^{*}(\omega_{\mathrm{FS}}\oplus\omega)\leq k_3 \frac{2\pi^2}{\log(c)}.
\end{equation}
for some $r_v\in (\frac{\pi}{24c},\frac{\pi}{24})$ that depends on the map $v$. Here $c>1$ does not depend on $v$. The constant $k_3>0$ only depends on the metric $g_0$ and varies with it continuously in the $C^\infty$-topology.	
	
Let $c_{J,g_0}>0$ be the positive constant for which Theorem \ref{meanvaluek} holds. Choose $c=\max\{e^{4k_3 \pi},e^{\frac{18\pi^2}{\operatorname{injrad}(\mathbb{CP}^1\times M,g_0)^2}}, e^{2k_3\pi^2c^{-1}_{J,g_0}}\}$ in (\ref{cut}), then Corollary \ref{comparison0} and estimate (\ref{cut}) imply
	\[\int_{B_{\mathrm{FS}}(z,r_v)}\|dv\|_{g_0}^2=\int_{B_{\mathrm{FS}}(z,r_v)}v^{*}(\omega_{\mathrm{FS}}\oplus\omega)\leq k_3l^2(u|_{\partial B_{\mathrm{FS}}(z,r_v)})<c_{J,g_0}.\]
	By  Theorem \ref{meanvaluek}, we have 
	\[\|dv(z)\|_{g_0}^2\leq \frac{16}{\pi r_v^2}\int_{B_{\mathrm{FS}}(z,r_v)}\|dv\|_{g_0}^2.\] Since $\int_{B_{\mathrm{FS}}(z,r_v)}\|dv\|_{g_0}^2\leq \pi$ and $r_v\in (\pi/24c,\pi/24)$, we have 
	\[\|dv(z)\|_{g_0}\leq \frac{96c}{\pi}, \] 
	for all $z\in \mathbb{CP}^1$. The constant $ c$ does not depend on $v$. 
	
	Since $\mathbb{CP}^1\times M$ is compact, any Riemanian metric $g$ is comparable to $g_0:=(\omega_{\mathrm{FS}}\oplus\omega)(\cdot,J\cdot)$. So there exists $c_g>0$ such that 
	\[\|\cdot\|_{g} \leq c_g\|\cdot\|_{g_0},\]
	where $c_g$ varies continuously with $g$ and $J$ in $C^\infty$-topology. Thus
	\begin{equation}\label{conticonstantfinal}
		\|dv(z)\|_{g}\leq \frac{96c_g c}{\pi}:=C_{J,g},	
	\end{equation}
	for all $z\in \mathbb{CP}^1$. The constants $c_g$ and $c$ do not depend on $v$. 
	
	The constants $k_3$ and $\operatorname{injrad}(\mathbb{CP}^1\times M,g_0)$ in 
	\[c=\max\{e^{4k_3 \pi},e^{\frac{18\pi^2}{\operatorname{injrad}(\mathbb{CP}^1\times M,g_0)^2}}, e^{2k_3\pi^2c^{-1}_{J,g_0}}\}\]
	depend continuously on the metric $g_0:=(\omega_{\mathrm{FS}}\oplus\omega)(\cdot,J\cdot)$ which in turn depends continuously on $J$ in  the $C^\infty$-topology. By Theorem \ref{meanvaluek}, the constant $c_{J,g_0}>0$  is continuous  with respect to $J$  in the $C^\infty$-topology. Therefore, the constant 
	\[c=\max\{e^{4k_3 \pi},e^{\frac{18\pi^2}{\operatorname{injrad}(\mathbb{CP}^1\times M,g_0)^2}}, e^{2k_3\pi^2c^{-1}_{J,g_0}}\}\] is also continuous with respect to $J$ in  the $C^\infty$-topology.
	The conclusion is the constant 
	\[C_{J,g}:=\frac{96c_gc}{\pi}\]
	in (\ref{conticonstantfinal}) varies continuously with $J$ and $g$ in the $C^\infty$-topology. This completes the proof.
\end{proof}
\subsection{Proof of Theorem \ref{compact1} via Gromov-Schwarz lemma}\label{proof-Gromov-Schwarz lemma}
\begin{proof}
We repeat the above proof untill we arrive at the estimate (\ref{cut}). Let $\epsilon>0$ be the constant in Lemma \ref{gschap1}, and let $c_1$ and $c_2$ be the constants of Lemma \ref{monochap1} for the metric $g_0:=(\omega_{\mathrm{FS}}\oplus\omega)(\cdot,J\cdot)$. We prove that for 
\[c=\max\bigg\{e^{4k_3 \pi}, e^{\frac{18\pi^2}{\operatorname{injrad}(\mathbb{CP}^1\times M,g_0)^2}}, e^{\frac{4k_3\pi^2}{c_1c_2^2}}, e^{\frac{8\pi^2}{\epsilon^2}\big(\sqrt{\frac{k_3}{c_1}}+1\big)^2}\bigg\}\]
the estimate (\ref{cut}) and Lemma \ref{monochap1} imply that every $v$ admits some $r_v\in (\frac{\pi}{24c},\frac{\pi}{24})$ that depends on the map $v$ such that 
\[v(B_{\mathrm{FS}}(z,r_v))\subset B_{\varepsilon}(v(z)),\]
where $B_{\varepsilon}(v(z))$ denotes the ball of radius $\epsilon$ centered at $v(z)$ in $(\mathbb{CP}^1\times M, g_0)$. We then apply Lemma \ref{gschap1} to conclude that all $z\in \mathbb{CP}^1$ we have $\|dv(z)\|\leq C_{J,g_0}$, for some constant $C_{J,g_0}>0$ that is continuous with respect to $J$ in the $C^\infty$-topology.

For $c\geq e^{\frac{4k_3\pi^2}{c_1c_2^2}}$, estimate (\ref{cut})  implies
\begin{equation}\label{Gromoeste}
E(v|_{B_{\mathrm{FS}}(z,r_v)}):=\int_{B_{\mathrm{FS}}(z,r_v)}v^{*}(\omega_{\mathrm{FS}}\oplus\omega)<c_1 c_2^2.
\end{equation}

Let $d$ be the distance on $\mathbb{CP}^1\times M$ induced by the Riemannian metric $g_0:=(\omega_{\mathrm{FS}}\oplus \omega)(\cdot,J\cdot)$. For any $s\in B_{\mathrm{FS}}(z,r_v)$ we have
\begin{equation}\label{contra1}
d(v(s),v(\partial B_{\mathrm{FS}}(z,r_v)))\leq \sqrt{\frac{E(v|_{B_{\mathrm{FS}}(z,r_v)})}{c_1}}.
\end{equation}
	Indeed, if this is not the case, then for some $s \in B_{\mathrm{FS}}(z,r_v)$ and $r>\sqrt{\frac{E(v|_{B_{\mathrm{FS}}(z,r_v)})}{c_1}}$ we would have 
\begin{equation}\label{Gromoeste1}
	d(v(s),v(\partial B_{\mathrm{FS}}(z,r_v)))>r>\sqrt{\frac{E(v|_{B_{\mathrm{FS}}(z,r_v)})}{c_1}}.
\end{equation}

This implies that $v|_{B_{\mathrm{FS}}(z,r_v)}:B_{\mathrm{FS}}(z,r_v)\to \mathbb{CP}^1\times M$ passes through the center of the ball $B_r(v(s))$ and maps the boundary $\partial B_{\mathrm{FS}}(z,r_v)$ to the set-complement of $B_r(v(s))$. Also by (\ref{Gromoeste}) we can choose $r$ in (\ref{Gromoeste1}) so that $r<c_2$. Applying Lemma \ref{monochap1} we get 
	\[ \sqrt{\frac{E(v|_{B_{\mathrm{FS}}(z,r_v)})}{c_1}}\geq r.\]
It leads us to the contradiction 
	\[r>\sqrt{\frac{E(v|_{B_{\mathrm{FS}}(z,r_v)})}{c_1}}\geq r.\]
	
So Estimate (\ref{contra1}) must hold. This implies that for $s \in \partial B_{\mathrm{FS}}(z,r_v)$ the ball $B_{r}(v(s))$ of radius $r=\sqrt{\frac{E(v|_{B_{\mathrm{FS}}(z,r_v)})}{c_1}}+l(v(\partial B_{\mathrm{FS}}(z,r_v)))$ covers the image $v(B_{\mathrm{FS}}(z,r_v))$ and hence for any $s_1, s_2\in  B_{\mathrm{FS}}(z,r_v)$
\[d(v(s_1),v(s_2))\leq 2\bigg(\sqrt{\frac{E(v|_{B_{\mathrm{FS}}(z,r_v)})}{c_1}}+l(v(\partial B_{\mathrm{FS}}(z,r_v)))\bigg)\]
This  with (\ref{cut}) and (\ref{in}) imply
	\[d(v(s_1),v(s_2))\leq 2\pi \sqrt{\frac{2}{\log(c)}}\bigg(\sqrt{\frac{k_3}{c_1}}+1\bigg).\]
	
Since $c\geq e^{\frac{8\pi^2}{\epsilon^2}\big(\sqrt{\frac{k_3}{c_1}}+1\big)^2}$, we get
\[d(v(s_1),v(s_2))\leq \epsilon,\]
for any $s_1, s_2\in  B_{\mathrm{FS}}(z,r_v)$. This implies 
	\[v|_{B_{\mathrm{FS}}(z,r_v)}:B_{\mathrm{FS}}(z,r_v)\to B_{\varepsilon}(v(z)).\]
The Gromov-Schwarz lemma, Lemma \ref{gschap1}, implies
\[\|dv(z)\|_{g_0}\leq C_{J,g_0},\]
for all $z\in \mathbb{CP}^1$ and some constant $C_{J,g_0}>0$ that does not depend on $v$ and varies continuously with $g_0$ and $J$ in the $C^\infty$-topology.

Since $\mathbb{CP}^1\times M$ is compact, any Riemannian metric $g$ is comparable to $g_0:=(\omega_{\mathrm{FS}}\oplus\omega)(\cdot,J\cdot)$. So there exists $c_g>0$ such that 
	\[\|\cdot\|_{g} \leq c_g\|\cdot\|_{g_0},\]
	where $c_g$ varies continuously with $g$ and $J$ in the $C^\infty$-topology. Thus
\[\|dv(z)\|_{g}\leq c_g C_{J,g_0}\]
for all $z\in \mathbb{CP}^1$. 

This gives a uniform $ C^1$-bound on the module space in Theorem \ref{compact} in terms of a constant that varies continuously with the almost complex structure $J$ for a given fixed Riemannian metric $g$. Higher jets of pseudo-holomorphic curves can be turned into pseudo-holomorphic curves in a suitable target manifold to which Gromov-Schwarz lemma can be applied, see {\cite[Chapter III]{Hummel01}}. So, we can inductively apply the above argument to the higher jets of curves in the moduli space of Theorem \ref{compact} and get a uniform bound on higher jets of every order. The compactness of the moduli spaces in Theorem \ref{compact}  and Theorem \ref{compact1} then follow from Arzel$\grave{a}$-Ascoli Theorem. This proof does not rely on elliptic regularity results for Cauchy-Riemann equation.\qedhere
\end{proof}

We observe that each of the moduli spaces defined by (\ref{Moduli34}) and (\ref{moduli}) carries the minimal positive symplectic area, and this is very essential to our proofs presented in the above two sections. By apply our arguments from either Subsection \ref{proof-mean-value inequality} or Subsection \ref{proof-Gromov-Schwarz lemma}, we obtain a proof of the following more general theorem.
\begin{theorem}[cf. Theorem \ref{compact1}]
Let $(X, \omega)$ be any closed symplectic manifold. Let $Z\subseteq H_2(X,\mathbb{Z})$ be the image of the Hurewicz map $\pi_2(X)\to H_2(X,\mathbb{Z})$. Let $A\in Z$ be a homology class of the minimal positive symplectic area in $Z$, i.e, 
\[0<\int_A\omega=\inf\bigg\{\int_W\omega>0:W\in Z\subseteq H_2(X,\mathbb{Z})\bigg\}.  \]

 Let $I$ be a compact topology space and $\{J_t\}_{t\in I}\subset \mathcal{J}_c(X,\omega)$ be a continous familiy of $\omega$-compatible almost complex structures.  Define
	\begin{equation*}
	\mathcal{M}(\{J_t\}_{t\in I},A):=\left\{(t,u):
	\begin{array}{l}
		t\in I,\\
		u:(\mathbb{CP}^1,i)\to (X,J_t),\\
		du\circ i=J_t\circ du ,\\
		u_*[\mathbb{CP}^1]=A.
	\end{array}
	\right\}\bigg/\sim
\end{equation*}
where $u_1 \sim u_2$ if and only if $u_1=u_2\circ\varphi$ for some $\varphi \in \operatorname{Aut}(\mathbb{CP}^1,i)$. The moduli space $\mathcal{M}(\{J_t\}_{t\in I},A)$  is compact in the quotient topology coming from  $I\times C^\infty(\mathbb{CP}^1,X)$. 
\end{theorem}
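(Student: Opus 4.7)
The plan is to repeat, with purely cosmetic changes, the argument of Subsection \ref{proof-mean-value inequality} (or equivalently Subsection \ref{proof-Gromov-Schwarz lemma}), so the pair $(\mathbb{CP}^1\times M,\omega_{\mathrm{FS}}\oplus\omega)$ with the class $[\mathbb{CP}^1\times\{\operatorname{pt}\}]$ is replaced throughout by $(X,\omega)$ with the class $A$. The very first step is to isolate the correct area quantization statement. Because $Z\subseteq H_2(X,\mathbb{Z})$ is the image of $\pi_2(X)$ under the Hurewicz map, it is a subgroup, and the map $Z\to \mathbb{R}$, $B\mapsto \int_B\omega$, is a group homomorphism whose image is a subgroup of $\mathbb{R}$. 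The hypothesis that $\int_A\omega$ is the minimal positive value of this image forces the image to be exactly $\int_A\omega\cdot\mathbb{Z}$. Since every smooth map $u:\mathbb{CP}^1\to X$ automatically represents a class in $Z$ (it lies in the image of the Hurewicz map), its symplectic area $E(u)$ always belongs to $\int_A\omega\cdot \mathbb{Z}$; in particular, any smooth sphere in $X$ whose symplectic area lies in the open interval $\bigl(-\int_A\omega,\,\int_A\omega\bigr)$ must have zero symplectic area, and any $u\in \mathcal{M}(\{J_t\}_{t\in I},A)$ carries the minimal positive area $\int_A\omega$.

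With this in place, the rescaling Step 02 from Section \ref{outline} applies verbatim: the three M\"obius transformations $g_1,g_2,g_3$ redistribute the area $\int_A\omega$ evenly so that $v:=u\circ g_1\circ g_2\circ g_3$ has symplectic area $\tfrac{1}{2}\int_A\omega$ on each of the six hemispheres of $\mathbb{CP}^1$. The Cauchy--Schwarz annulus computation of Step 03 then produces, for any $c>1$ (chosen independently of $v$) and some $r_v\in(\tfrac{\pi}{24c},\tfrac{\pi}{24})$ depending on $v$, an estimate of the form
\[
l^2\bigl(v(\partial B_{\mathrm{FS}}(z,r_v))\bigr)\leq \frac{2\pi\int_A\omega}{\log c},
\]
where lengths are measured in $g_t:=\omega(\cdot,J_t\cdot)$.

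The boundary-cap construction used in the proof of Theorem \ref{compact} now works without change: once $c$ is large enough that this loop length is much smaller than $\operatorname{injrad}(X,g_t)$, one extends $v|_{\partial B_{\mathrm{FS}}(z,r_v)}$ by a radial geodesic cone $\Phi$, glues it to $v|_{B_{\mathrm{FS}}(z,r_v)}$ to form a smooth sphere, and exploits the area quantization above: if $c$ is chosen so that the total area of the capped sphere lies in $\bigl(-\int_A\omega,\int_A\omega\bigr)$, it must be zero, yielding $\int_{B_{\mathrm{FS}}(z,r_v)}v^*\omega\leq k_3\,l^2(v(\partial B_{\mathrm{FS}}(z,r_v)))$. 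Enlarging $c$ once more (by an amount depending continuously on $J_t$) triggers either Theorem \ref{meanvaluek} or Lemma \ref{gschap1} and gives a uniform $C^1$-bound on a representative $v$ of each class in $\mathcal{M}(\{J_t\}_{t\in I},A)$.

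The main obstacle is making every constant uniform over $I$. The quantities $k_3$, $\operatorname{injrad}(X,g_t)$, the mean-value constant $c_{J_t,g_t}$ of Theorem \ref{meanvaluek}, and the Gromov--Schwarz constants $\epsilon,c_1,c_2$ all depend continuously on $J_t$ in the $C^\infty$-topology; combined with compactness of $I$, this produces a single constant $C>0$ such that every $[u]\in \mathcal{M}(\{J_t\}_{t\in I},A)$ admits a representative $v$ with $\|dv\|_{g}\leq C$ on $\mathbb{CP}^1$ for some auxiliary metric $g$. Given a sequence $([t_k,u_k])$ in $\mathcal{M}(\{J_t\}_{t\in I},A)$, compactness of $I$ extracts a subsequence with $t_k\to t_\infty$ and $J_{t_k}\to J_{t_\infty}$ in $C^\infty$; the uniform $C^1$-bound bootstraps to a $C^\infty$-bound on the reparametrized $v_k$ as in \cite[Sec.\ 2.2.3]{Casim2014}; and Arzel\`a--Ascoli produces a $C^\infty$-convergent subsequence whose limit is $J_{t_\infty}$-holomorphic and, by $C^0$-convergence, still represents $A$. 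This is precisely the same endgame as in the proof of Theorem \ref{compact1}, so the compactness conclusion follows.
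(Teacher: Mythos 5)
Your proposal is correct and follows exactly the route the paper intends: the paper's own ``proof'' of this theorem is the one-line remark that the arguments of Subsections~\ref{proof-mean-value inequality} or~\ref{proof-Gromov-Schwarz lemma} go through verbatim, and you have carried that out. The one genuinely new ingredient that the general statement requires---and which the paper does not spell out---is the area-quantization step, which you handle correctly: $Z$ is the image of the group homomorphism given by the Hurewicz map, so it is a subgroup of $H_2(X,\mathbb{Z})$; the integration-against-$\omega$ map $Z\to\mathbb{R}$ is a homomorphism; and a subgroup of $(\mathbb{R},+)$ whose positive part has an attained infimum $\int_A\omega>0$ must equal $\int_A\omega\cdot\mathbb{Z}$. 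Since every continuous sphere (in particular your geodesic-cone cap glued to $v|_{B_{\mathrm{FS}}(z,r_v)}$) lands in $Z$, the isoperimetric estimate for the small annulus forces the cap construction to work just as in the $\mathbb{CP}^1\times M$ case, with $\pi$ replaced everywhere by $\int_A\omega$ and the thresholds for $c$ adjusted accordingly. The rescaling via $g_1,g_2,g_3$, the annulus Cauchy--Schwarz bound, and the invocation of Theorem~\ref{meanvaluek} or Lemma~\ref{gschap1} then all transfer without change, and the continuity of the resulting constants in $J_t$ together with compactness of $I$ gives a uniform $C^1$-bound.

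One small wrinkle worth noting: your endgame extracts a \emph{subsequence} $t_k\to t_\infty$ from compactness of $I$, but a general compact topological space need not be sequentially compact, and for general $I$ the moduli space's topology (a quotient of a subspace of $I\times C^\infty$) need not be metrizable, so sequential compactness is neither implied by nor implies compactness. This is easily patched: after establishing the uniform bounds $\|d^k v\|_g\le C_k$ for reparametrized representatives, set $K=\{v:\|d^kv\|_g\le C_k\ \forall k\}$, which is compact in $C^\infty(\mathbb{CP}^1,X)$; then $\widehat{\mathcal M}(\{J_t\},A)\cap(I\times K)$ is a closed (hence compact) subset of the compact space $I\times K$, and the quotient map from it onto $\mathcal M(\{J_t\},A)$ is a continuous surjection, so the moduli space is compact. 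Alternatively, one can simply assume $I$ is a compact metric space, which covers every intended application and makes your sequential argument valid as written.
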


\section{Proof of Theorem \ref{ab}}
In this section, we explain a proof of Theorem \ref{ab}. Assuming the hypothesis  of Theorem \ref{ab}, the idea of the proof is to prove that for generic $J\in \mathcal{J}_c(\mathbb{CP}^1\times M, \omega_{\mathrm{FS}}\oplus\omega)$ the evaluation map 
\[\operatorname{ev}_{J}: \widehat{\mathcal{M}}(J,[\mathbb{CP}^1\times\{\operatorname{pt}\}] )\times_{\operatorname {Aut}(\mathbb{CP}^1)} \mathbb{CP}^1\to \mathbb{CP}^1\times M\] has degree $1$ mod $2$. This. in other words, means that Theorem \ref{ab} holds for generic choice of $J$ in $\mathcal{J}_c(\mathbb{CP}^1\times M, \omega_{\mathrm{FS}}\oplus\omega)$ and generic choice of $p$ in $ \mathbb{CP}^1\times M$. Having established this, one can then construct a sequence $J_n$ that $C^{\infty}$-converges to the given $J$ and another sequence $p_n\in \mathbb{CP}^1\times M$  converges to $p$. Corresponding to these two sequences, one can choose elements $[(u_n,J_n,z_n)]\in \widehat{\mathcal{M}}(J_n, [\mathbb{CP}^1\times\{\operatorname{pt}\}])\times_{\operatorname{Aut}(\mathbb{CP}^1)} \mathbb{CP}^1$ that admits a convergent sequence by Theorem \ref{compact1}. The limit of this subsequence is the required curve passing through $p$. We achieve this in a sequence of lemmas below. We follow the presentations given in \cite{MacDuff} and \cite{Wendl:2010aa}.
\begin{lemma}\label{split0}
	Let $J_M$ be an $\omega$-compatible almost complex structure on $(M,\omega)$.  For the split almost complex structure $i\oplus J_M$ on $\mathbb{CP}^1\times M$, the moduli space  $\mathcal{\widehat{M}}(i\oplus J_M,[\mathbb{CP}^1\times\{\operatorname{pt}\}])$ is a finite-dimensional smooth manifold and the evaluation map 
	\[\operatorname{ev}_{i\oplus J_M}:\widehat{\mathcal{M}}(i\oplus J_M,[\mathbb{CP}^1\times\{\operatorname{pt}\}])\times_{\operatorname{Aut}(\mathbb{CP}^1,i)} \mathbb{CP}^1\to \mathbb{CP}^1\times M\]
	 is a diffeomorphism.
\end{lemma}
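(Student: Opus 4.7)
My plan is to show that every $(i\oplus J_M)$-holomorphic sphere in the class $[\mathbb{CP}^1\times\{\operatorname{pt}\}]$ has the explicit form $z\mapsto (\phi(z),m)$ with $\phi\in\operatorname{Aut}(\mathbb{CP}^1,i)$ and $m\in M$, and then to read off the structure of the moduli space and the evaluation map from this description. First I would observe that for the split almost complex structure, both projections $\pi_{\mathbb{CP}^1}\colon \mathbb{CP}^1\times M\to \mathbb{CP}^1$ and $\pi_M\colon \mathbb{CP}^1\times M\to M$ are pseudo-holomorphic, so for any $u\in \widehat{\mathcal{M}}(i\oplus J_M,[\mathbb{CP}^1\times\{\operatorname{pt}\}])$ the map $\pi_M\circ u\colon (\mathbb{CP}^1,i)\to (M,J_M)$ is $J_M$-holomorphic. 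Its homology class is $(\pi_M)_*[\mathbb{CP}^1\times\{\operatorname{pt}\}]=0$ in $H_2(M,\mathbb{Z})$, so $E(\pi_M\circ u)=0$, and Corollary \ref{comparison0} forces $\pi_M\circ u$ to be constant, equal to some $m\in M$.

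Next I would note that $\phi:=\pi_{\mathbb{CP}^1}\circ u\colon (\mathbb{CP}^1,i)\to(\mathbb{CP}^1,i)$ is an ordinary holomorphic map, and since $\phi_*[\mathbb{CP}^1]=[\mathbb{CP}^1]$, its degree equals one. Every holomorphic self-map of $\mathbb{CP}^1$ of degree one is a Möbius transformation, so $\phi\in\operatorname{Aut}(\mathbb{CP}^1,i)$. This gives a natural bijection
\[
\Psi\colon \operatorname{Aut}(\mathbb{CP}^1,i)\times M\;\longrightarrow\; \widehat{\mathcal{M}}(i\oplus J_M,[\mathbb{CP}^1\times\{\operatorname{pt}\}]),\qquad (\phi,m)\mapsto\bigl(z\mapsto(\phi(z),m)\bigr),
\]
which is manifestly smooth in both directions (the inverse sends $u$ to $(\pi_{\mathbb{CP}^1}\circ u,\pi_M\circ u(z_0))$ for any chosen $z_0$). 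Hence $\widehat{\mathcal{M}}(i\oplus J_M,[\mathbb{CP}^1\times\{\operatorname{pt}\}])$ is a smooth manifold of finite dimension $6+(2n-2)=2n+4$.

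Now the action of $\operatorname{Aut}(\mathbb{CP}^1,i)$ on $\widehat{\mathcal{M}}\times\mathbb{CP}^1$ transports under $\Psi\times\operatorname{id}$ to the action $\psi\cdot((\phi,m),z)=((\phi\circ\psi,m),\psi^{-1}(z))$, which is free because $\phi\circ\psi=\phi$ forces $\psi=\operatorname{id}$, and proper since $\operatorname{Aut}(\mathbb{CP}^1,i)=\mathrm{PSL}(2,\mathbb{C})$ acts properly on itself by right multiplication and acts trivially on $M$. So the quotient is a smooth manifold of dimension $(2n+4)+2-6=2n$. The evaluation map becomes $[(\phi,m),z]\mapsto (\phi(z),m)$, which is well defined on the quotient and surjective.

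The only potentially subtle point is injectivity together with smoothness of the inverse, and I would settle this by exhibiting an explicit smooth inverse. Define
\[
\Phi\colon \mathbb{CP}^1\times M\longrightarrow \widehat{\mathcal{M}}\times_{\operatorname{Aut}(\mathbb{CP}^1,i)}\mathbb{CP}^1,\qquad (w,m)\longmapsto [(\operatorname{id}_{\mathbb{CP}^1},m),w].
\]
Then $\operatorname{ev}_{i\oplus J_M}\circ\Phi=\operatorname{id}$ is immediate, while $\Phi\circ\operatorname{ev}_{i\oplus J_M}[(\phi,m),z]=[(\operatorname{id},m),\phi(z)]=[(\phi,m),z]$ using the element $\psi=\phi^{-1}\in\operatorname{Aut}(\mathbb{CP}^1,i)$, since $\phi\circ\phi^{-1}=\operatorname{id}$ and $(\phi^{-1})^{-1}(z)=\phi(z)$. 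Both $\Phi$ and $\operatorname{ev}_{i\oplus J_M}$ are smooth, so the evaluation map is a diffeomorphism. I expect this last verification of the inverse to be the only step that is easy to get wrong, since the fibered product conventions on the action can trip one up, but the explicit formula above makes it bookkeeping rather than a genuine obstacle.
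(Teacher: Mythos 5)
Your identification of the moduli space is exactly right and matches the paper: both projections are holomorphic for the split structure, $\pi_M\circ u$ has zero symplectic area and is hence constant by Corollary \ref{comparison0}, and $\pi_{\mathbb{CP}^1}\circ u$ has degree one and is hence a M\"obius transformation. Your explicit inverse $\Phi$ for the evaluation map is also a clean way to close that part of the argument.

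However, there is a genuine gap: you never verify that the moduli space is \emph{regular}, i.e.\ that the linearized Cauchy--Riemann operator $D_u\bar\partial$ is surjective at every $u=(\varphi,m)$. This is not a cosmetic point. In this paper ``the moduli space is a finite-dimensional smooth manifold'' is meant in the Fredholm sense --- the smooth structure produced by the implicit function theorem from the nonlinear $\bar\partial$-operator on the Banach manifold $W^{1,3}(\mathbb{CP}^1,\mathbb{CP}^1\times M)$ --- and the lemma is invoked in the proof of Lemma \ref{split2} precisely to conclude $i\oplus J_M\in\mathcal{J}_{\mathrm{reg}}$, so that the moduli space can be connected by a cobordism to $\widehat{\mathcal M}(J,\cdot)$ and the mod-$2$ degree of $\operatorname{ev}$ is homotopy-invariant. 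Exhibiting a bijection with $\operatorname{Aut}(\mathbb{CP}^1,i)\times M$ gives you a smooth structure on the set of solutions, but it does not tell you that the solution set is cut out transversally, nor that your ad hoc smooth structure agrees with the Fredholm one. The paper supplies the missing step: it splits the linearization as $D_{u^{\varphi}_m}\bar\partial = D_1\oplus D_2$ acting on sections of $\varphi^*T\mathbb{CP}^1\oplus E$, notes $c_1(\varphi^*T\mathbb{CP}^1)=2>-2$ and $c_1(E)=0>-2$, and invokes the surjectivity criterion for Cauchy--Riemann operators on line (sub)bundles over $\mathbb{CP}^1$ ({\cite[Lemma 3.3.2]{MacDuff}}) to conclude $D_{u^{\varphi}_m}\bar\partial$ is onto. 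Without this you have not shown the moduli space is regular, and the dimension count $2n+4$ you obtain by inspection has not been matched to the Fredholm index $n\chi(\mathbb{CP}^1)+2c_1([\mathbb{CP}^1\times\{\operatorname{pt}\}])=2n+4$, which is what legitimizes treating the explicit parametrization as the chart. Your argument needs this transversality paragraph to actually prove the lemma as it is used.
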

\begin{proof}
	A map $u:\mathbb{CP}^1\to \mathbb{CP}^1\times M$, written as $u=(u_1,u_2)$, is $i\oplus J_M$-holomorphic if and only $u_1:\mathbb{CP}^1 \to \mathbb{CP}^1$  is  $i$-holomorphic and $u_2:\mathbb{CP}^1 \to M$ is $J_M$-holomorphic. Since $\pi_{2}(M)=0$, the map $u_2$ has zero symplectic area, i.e.,
	\[\int_{\mathbb{CP}^1}u_2^{*}\omega=0.\] By Corollary \ref{comparison0},  $u_2$ is a constant map.

Since $u$ represents the homology class $[\mathbb{CP}^1\times\{\operatorname{pt}\}]$,  $u_1$ represents the homology class $[\mathbb{CP}^1]$. This means $u_1:\mathbb{CP}^1 \to \mathbb{CP}^1$  has mapping degree equal to $1$ and hence  $u_1\in \operatorname{Aut}(\mathbb{CP}^1,i)$. We conclude that
	\[\mathcal{\widehat{M}}(i\oplus J_M,[\mathbb{CP}^1\times\{\operatorname{pt}\}])=\big\{(\varphi,m): \varphi\in \operatorname{Aut}(\mathbb{CP}^1,i), m\in M\big\},\]
	where $(\varphi ,m)$ is interpreted as a $i\oplus J_M$-holomorphic map $u^{\varphi}_m:\mathbb{CP}^1\to \mathbb{CP}^1\times M$ defined by $u^{\varphi}_m(z):=(\varphi(z),m)$.
	
The pull-back complex bundle $((u^{\varphi}_m)^*T(\mathbb{CP}^1\times M),i\oplus J_M)$ over $\mathbb{CP}^1$ splits as 
\[(u^{\varphi}_m)^*T(\mathbb{CP}^1\times M)=(\varphi^*T\mathbb{CP}^1,i)\oplus (E,J_M)\]
where $E\to \mathbb{CP}^1$ is the trivial bundle of complex rank $n-1$ whose fiber at each $z\in \mathbb{CP}^1$ is $(T_mM,J_M)$. Since $(\varphi^*T\mathbb{CP}^1,i)\simeq (T\mathbb{CP}^1,i)$, we have 
\[(u^{\varphi}_m)^*T(\mathbb{CP}^1\times M)=(\varphi^*T\mathbb{CP}^1,i)\oplus (E,J_M)\simeq (T\mathbb{CP}^1,i)\oplus (E,J_M)\]
By {\cite[Theorem 2.7.1]{MR3674984}}, the first Chern number of $(u^{\varphi}_m)^*T(\mathbb{CP}^1\times M)$ can be computed as follows
\[c_1((u^{\varphi}_m)^*T(\mathbb{CP}^1\times M),i\oplus J_M)=c_1(T\mathbb{CP}^1,i)+c_1(E,J_M)= \chi(\mathbb{CP}^1)+0=2=c_1([\mathbb{CP}^1\times\{\operatorname{pt}\}]).\]
We will need this computation latter in our argument.
 
\textbf{Smoothness:} we show that $\mathcal{\widehat{M}}(i\oplus J_M,[\mathbb{CP}^1\times\{\operatorname{pt}\}])$ is a smooth finite-dimensional manifold. Let $W^{1,3}(\mathbb{CP}^1,\mathbb{CP}^1\times M)$ denote the space of functions $u:\mathbb{CP}^1\to \mathbb{CP}^1\times M$ that are of Sobolev class $W^{1,3}$ and represent the homology class $[\mathbb{CP}^1\times\{\operatorname{pt}\}]$. For $u\in  W^{1,3}(\mathbb{CP}^1,\mathbb{CP}^1\times M)$, let $\overline{\text{Hom}}_{\mathbb{C}}(T\mathbb{CP}^1, (u^{*}T(\mathbb{CP}^1\times M)))$ be the bundle of complex-antilinear 1-forms on $\mathbb{CP}^1$ with values in the complex vector bundle $(u^{*}T(\mathbb{CP}^1\times M),i\oplus J_M)$. Let $L^3(\overline{\text{Hom}}_{\mathbb{C}}(T\mathbb{CP}^1, u^{*}T(\mathbb{CP}^1\times M)))$ denote the space of $L^3$-sections of 
\[\overline{\text{Hom}}_{\mathbb{C}}(T\mathbb{CP}^1, (u^{*}T(\mathbb{CP}^1\times M))).\] 
One can prove that 
\[\Lambda:=\bigcup_{u\in W^{1,3}(\mathbb{CP}^1,\mathbb{CP}^1\times M) } L^3(\overline{\text{Hom}}_{\mathbb{C}}(T\mathbb{CP}^1, u^{*}T(\mathbb{CP}^1\times M))\]
is a smooth Banach bundle with base $W^{1,3}(\mathbb{CP}^1,\mathbb{CP}^1\times M)$ and fiber \[L^3(\overline{\text{Hom}}_{\mathbb{C}}(T\mathbb{CP}^1, u^{*}T(\mathbb{CP}^1\times M))\]
over $u\in W^{1,3}(\mathbb{CP}^1,\mathbb{CP}^1\times M)$, see {\cite[Chapter 3]{MacDuff}} for detailed analysis.

Consider the non-linear Cauchy-Riemann operator \[\bar{\partial}: W^{1,3}(\mathbb{CP}^1,\mathbb{CP}^1\times M)\to \Lambda\]
defined by $\bar{\partial}(u)=du+(i\oplus J_M)\circ du\circ i$. Note that 
\[\mathcal{\widehat{M}}(i\oplus J_M,[\mathbb{CP}^1\times\{\operatorname{pt}\}])=\bar{\partial}^{-1}(0),\]
where $0$ denotes the $0$-section of $\Lambda$. For every $u\in \bar{\partial}^{-1}(0)$, the linearization of $\bar{\partial}$ at $u$, denoted by $D_{u}\bar{\partial}$, is a real linear Cauchy-Riemann operator 
	\[D_{u}\bar{\partial}:W^{1,3}(\mathbb{CP}^1, u^{*}T(\mathbb{CP}^1\times M))\to L^3(\overline{\text{\text{Hom}}}_{\mathbb{C}}(T\mathbb{CP}^1, u^{*}T(\mathbb{CP}^1\times M)),\]
where $W^{1,3}(\mathbb{CP}^1, u^{*}T(\mathbb{CP}^1\times M)$ denotes the space of $W^{1,3}$-sections of the pullback bundle $u^{*}T(\mathbb{CP}^1\times M)\to \mathbb{CP}^1$, for details see {\cite[Section 3.1]{MacDuff}}. By {\cite[Theorem 3.1.8]{Wendl:2010aa}}, the operator $D_{u}\bar{\partial}$ is Fredholm of index
\[\operatorname{ind}(D_{u}\bar{\partial})=n\chi(\mathbb{CP}^1)+2c_1([u])=2n+2c_1([\mathbb{CP}^1\times\{\operatorname{pt}\}])=2n+4.\]

To show that $\mathcal{\widehat{M}}([\mathbb{CP}^1\times\{\operatorname{pt}\}], i\oplus J_M)=\bar{\partial}^{-1}(0)$ is a smooth manifold, by the implicit function theorem, it is enough to prove that $\hat{\partial}$ is traverse to the zero section in $\Lambda$, or equivalently, $D_{u}\bar{\partial}$ is surjective for every $u\in \bar{\partial}^{-1}(0)$. The dimension of $\mathcal{\widehat{M}}([\mathbb{CP}^1\times\{\operatorname{pt}\}], i\oplus J_M)$ as a smooth manifold is then given by the Fredholm index of $D_{u}\bar{\partial}$ which is $2n+4$ by the above calculation.
	
Recall that we have the splitting 
\[((u^{\varphi}_m)^*T(\mathbb{CP}^1\times M),i\oplus J_M)=(\varphi^*T\mathbb{CP}^1,i)\oplus (E,J_M).\]
This gives the splittings 
\[W^{1,3}(\mathbb{CP}^1, (u^{\varphi}_m)^*T(\mathbb{CP}^1\times M))=W^{1,3}(\mathbb{CP}^1, \varphi^{*}T\mathbb{CP}^1)\oplus W^{1,3}(\mathbb{CP}^1,E)\]
and 
\[L^3(\overline{\text{\text{Hom}}}_{\mathbb{C}}(T\mathbb{CP}^1, (u^{\varphi}_m)^*T(\mathbb{CP}^1\times M)))=L^3(\overline{\text{\text{Hom}}}_{\mathbb{C}}(T\mathbb{CP}^1, \varphi^{*}T\mathbb{CP}^1))\oplus L^3(\overline{\text{\text{Hom}}}_{\mathbb{C}}(T\mathbb{CP}^1, E)).\]
With respect to these splittings, the linearized Cauchy-Riemann operator under discussion can be written as
\[D_{u^{\varphi}_m}\bar{\partial}=\begin{pmatrix}
D_1 & 0 \\
0 & D_2 
\end{pmatrix},\]
where $D_1$ and $D_2$ real-linear Cauchy-Riemann type operators 
\[D_1:W^{1,3}(\mathbb{CP}^1, \varphi^{*}T\mathbb{CP}^1)\to L^3(\overline{\text{\text{Hom}}}_{\mathbb{C}}(T\mathbb{CP}^1, \varphi^{*}T\mathbb{CP}^1))\]
and 
\[D_2: W^{1,3}(\mathbb{CP}^1,E)\to L^3(\overline{\text{\text{Hom}}}_{\mathbb{C}}(T\mathbb{CP}^1, E)).\]
	
Note that $c_1(\varphi^{*}T\mathbb{CP}^1,i)=\chi(\mathbb{CP}^1)=2$ and $c_1(E,J_M)=0$ because $E$ is a trivial bundle. The linear Cauchy-Reimann operators $D_1$ and $D_2$ are both surjective by {\cite[Lemma 3.3.2]{MacDuff}} which states the following: let $E\to \mathbb{CP}^1$ be any complex vector bundle of complex rank $n$ such that $E=\oplus_{k=1}^{m}E_i$, where $E_i$ are sub-bundles of $E$. Let $\Gamma(E)$ denote the space section of $E$ with a suitable regularity. Let  \[D:\Gamma(E)\to \Gamma(\overline{\operatorname{Hom}}_{\mathbb{C}}(T\mathbb{CP}^1, E))\]
be a real-linear Cauchy-Reimann operator such that $E_i$ are $D$-invariant.  Then $D$ is surjective if and only if $c_{1}(E_k/E_{k-1})>-2$ for all $k$. Applying this to $D_{u^{\varphi}_m}\bar{\partial}$, the above discussion implies $D_{u^{\varphi}_m}\bar{\partial}$ is surjective. Hence, $\mathcal{\hat{M}}(i\oplus J_M,[\mathbb{CP}^1\times\{\operatorname{pt}\}])$ is a smooth manifold of dimension $2n+4$. 

The quotient 
\[\widehat{\mathcal{M}}(i\oplus J_M,[\mathbb{CP}^1\times\{\operatorname{pt}\}])\times_{\operatorname{Aut}(\mathbb{CP}^1,i)} \mathbb{CP}^1.\]
is a smooth manifold of dimension $2n$. Also, observe that
\[\widehat{\mathcal{M}}(i\oplus J_M,[\mathbb{CP}^1\times\{\operatorname{pt}\}])\times_{\operatorname{Aut}(\mathbb{CP}^1,i)} \mathbb{CP}^1=\big\{(\operatorname{Id},m, z): m\in M, z\in \mathbb{CP}^1\big\}.\]
So evaluation map
\[\operatorname{ev}_{i\oplus J_M}:\widehat{\mathcal{M}}(i\oplus J_M,[\mathbb{CP}^1\times\{\operatorname{pt}\}])\times_{\operatorname{Aut}(\mathbb{CP}^1,i)} \mathbb{CP}^1\to \mathbb{CP}^1\times M\]
takes the form 
\[\operatorname{ev}_{i\oplus J_M}(\operatorname{Id},m, z)=(z,m).\]
which is clearly a diffeomorphism.
\end{proof}
\begin{lemma}\label{split2}
	There exists a subset $\mathcal{J}_{\mathrm{reg}}$ of $\mathcal{J}_c(\mathbb{CP}^1\times M, \omega_{\mathrm{FS}}\oplus\omega)$ such that:
	\begin{itemize}
\item $\mathcal{J}_{reg}$ is a comeagre, i.e., it is a countable intersection of open dense subsets of  $\mathcal{J}_c(\mathbb{CP}^1\times M, \omega_{\mathrm{FS}}\oplus\omega)$.
		\item For every $J\in \mathcal{J}_{\mathrm{reg}}$ and generic point $p\in \mathbb{CP}^1\times M$, there exists a $J$-holomorphic sphere $u:(\mathbb{CP}^1,i)\to (\mathbb{CP}^1\times M,J)$ that passes through $p$ and represents the homology class $[\mathbb{CP}^1\times \{\operatorname{pt}\}] \in H_2(\mathbb{CP}^1\times M,\mathbb{Z} )$.
	\end{itemize}
\end{lemma}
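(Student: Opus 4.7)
The plan is to take $\mathcal{J}_{\text{reg}}$ to be the comeagre subset furnished by Theorem \ref{trans0} with Remark \ref{trans123} for the class $A := [\mathbb{CP}^1 \times \{\operatorname{pt}\}]$, and to prove that for every $J \in \mathcal{J}_{\text{reg}}$ the evaluation map $\operatorname{ev}_J : \widehat{\mathcal{M}}(J, A) \times_{\operatorname{Aut}(\mathbb{CP}^1)} \mathbb{CP}^1 \to \mathbb{CP}^1 \times M$ is a proper smooth map between closed oriented $2n$-manifolds whose mod-$2$ degree equals $1$. Once that is established, Sard's theorem together with properness gives that every regular value $p$ of $\operatorname{ev}_J$ has non-empty preimage, and since the regular values form a comeagre subset of $\mathbb{CP}^1 \times M$, this is exactly the desired statement for generic $p$.

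First I would verify the manifold structure. Every $J$-holomorphic sphere representing the primitive class $A$ is simple, as noted in the example following the definition of multiply covered curves, so $\widehat{\mathcal{M}}(J, A) = \widehat{\mathcal{M}}^s(J, A)$. By Theorem \ref{trans0} this is a smooth oriented manifold of dimension $2n + 2c_1(A) = 2n + 4$, where $c_1(A) = 2$ is the Chern number computed in the proof of Lemma \ref{split0}. Simplicity guarantees that the $\operatorname{Aut}(\mathbb{CP}^1, i)$-action on $\widehat{\mathcal{M}}(J, A) \times \mathbb{CP}^1$ is free, so the quotient $\widehat{\mathcal{M}}(J, A) \times_{\operatorname{Aut}(\mathbb{CP}^1)} \mathbb{CP}^1$ is a smooth oriented manifold of dimension $2n + 4 + 2 - 6 = 2n$. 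Applying Theorem \ref{compact1} to the constant path $J_t \equiv J$ shows this quotient is compact, and $\operatorname{ev}_J$ is smooth by Proposition \ref{evaluation}.

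The heart of the proof is computing $\deg_2(\operatorname{ev}_J)$. Fix any $\omega$-compatible $J_M$ on $M$ and set $J_0 := i \oplus J_M$. Lemma \ref{split0} shows that $\operatorname{ev}_{J_0}$ is a diffeomorphism, so $\deg_2(\operatorname{ev}_{J_0}) = 1$; moreover, its proof displays the linearized Cauchy-Riemann operator $D_u \bar\partial$ as a diagonal sum of two surjective real-linear Cauchy-Riemann operators, so $J_0$ is a regular endpoint in the sense of Theorem \ref{trans1}. Given any $J \in \mathcal{J}_{\text{reg}}$, Theorem \ref{trans1} with Remark \ref{trans123} furnishes a smooth path $\alpha : [0,1] \to \mathcal{J}_c(\mathbb{CP}^1 \times M, \omega_{\text{FS}} \oplus \omega)$ from $J_0$ to $J$ such that the parametrized moduli space $\widehat{\mathcal{M}}^s(\alpha, A)$ is a smooth oriented $(2n + 5)$-manifold with boundary $\widehat{\mathcal{M}}^s(J_0, A) \sqcup \widehat{\mathcal{M}}^s(J, A)$. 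Taking the fibered product with $\mathbb{CP}^1$ and quotienting by the free $\operatorname{Aut}(\mathbb{CP}^1, i)$-action yields a smooth $(2n+1)$-cobordism; Theorem \ref{compact1} applied to the family $\alpha$ makes the parametrized evaluation map proper. Mod-$2$ degree is a cobordism invariant of proper smooth maps between oriented manifolds of the same dimension, so $\deg_2(\operatorname{ev}_J) = \deg_2(\operatorname{ev}_{J_0}) = 1$.

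The main obstacle I expect is the bookkeeping needed to carry the cobordism of Theorem \ref{trans1} through the fibered quotient by $\operatorname{Aut}(\mathbb{CP}^1, i)$: one must check that the action is free (handled by simplicity) and that the quotient carries a smooth structure on which the evaluation map is smooth and proper, the latter being exactly what Theorem \ref{compact1} supplies for the path $\alpha$. A minor delicacy is that $J_0$ itself need not lie in the specific comeagre set $\mathcal{J}_{\text{reg}}$, but this is harmless because the surjectivity of $D_u \bar\partial$ proved inside Lemma \ref{split0} is precisely what allows $J_0$ to play the role of a regular endpoint in Theorem \ref{trans1}. With the degree pinned down, the conclusion is immediate: by Sard's theorem the regular values of $\operatorname{ev}_J$ are comeagre in $\mathbb{CP}^1 \times M$, and at each such value the preimage has odd and hence non-zero cardinality, yielding a $J$-holomorphic sphere through $p$ representing the class $A$.
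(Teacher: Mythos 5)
Your proposal follows the paper's proof essentially step by step: take $\mathcal{J}_{\mathrm{reg}}$ from Theorem \ref{trans0}/Remark \ref{trans123}, use Lemma \ref{split0} to establish that $\operatorname{ev}_{i\oplus J_M}$ has mod-$2$ degree $1$ (and that $i\oplus J_M$ is regular), invoke Theorem \ref{trans1} for the cobordism and Theorem \ref{compact1} for its compactness, and conclude by cobordism invariance of degree. The one place you are slightly more careful than the paper---flagging that $J_0$ need not a priori lie in $\mathcal{J}_{\mathrm{reg}}$ but that the surjectivity of the linearized operator verified in Lemma \ref{split0} is what actually matters for Theorem \ref{trans1}---is a legitimate refinement, but the underlying argument is the same.
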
 
\begin{proof}
By Theorem \ref{trans0} and Remark \ref{trans123}, there exists a subset $\mathcal{J}_{\mathrm{reg}}$ of $\mathcal{J}_c(\mathbb{CP}^1\times M, \omega_{\mathrm{FS}}\oplus\omega)$ such that:
	\begin{itemize}
\item $\mathcal{J}_{\mathrm{reg}}$ is a comeagre, i.e., it is a countable intersection of open dense subsets of  $\mathcal{J}_c(\mathbb{CP}^1\times M, \omega_{\mathrm{FS}}\oplus\omega)$.
		\item For every $J\in \mathcal{J}_{\mathrm{reg}}$, the moduli space $\widehat{\mathcal{M}}(J, [\mathbb{CP}^1\times\{\operatorname{pt}\}])\times_{\operatorname{Aut}(\mathbb{CP}^1,i)} \mathbb{CP}^1$ is a smooth manifold of dimension $2n$.
	\end{itemize}
Pick an $\omega$-compatible almost complex structure $J_M$ on $M$. By Lemma \ref{split0}, we have $i\oplus J_M \in \mathcal{J}_{\mathrm{reg}}$. By Theorem \ref{trans1}, there exists a smooth path $\{J_t\}_{t\in [0,1]}\subset \mathcal{J}_c(\mathbb{CP}^1\times M, \omega_{\mathrm{FS}}\oplus\omega)$ with $J_0=i\oplus J_M$ and $J_1=J$ such that the moduli space
\begin{equation*}
	\mathcal{M}(\{J_t\}_{t\in [0,1]}, [\mathbb{CP}^1\times\{\operatorname{pt}\}]):=\left\{(t,u):
	\begin{array}{l}
		t\in [0,1],\\
		u:(\mathbb{CP}^1,i)\to (\mathbb{CP}^1\times M,J_t),\\
		du\circ i=J_t\circ du ,\\
		u_*[\mathbb{CP}^1]=[\mathbb{CP}^1\times\{\operatorname{pt}\}].
	\end{array}
	\right\}\bigg/\sim
\end{equation*}
produces a smooth cobordism  between $\widehat{\mathcal{M}}(i\oplus J_M,[\mathbb{CP}^1\times\{\operatorname{pt}\}])\times_{\operatorname{Aut}(\mathbb{CP}^1,i)} \mathbb{CP}^1$ and  $\widehat{\mathcal{M}}(J, [\mathbb{CP}^1\times\{\operatorname{pt}\}])\times_{\operatorname{Aut}(\mathbb{CP}^1,i)} \mathbb{CP}^1$. Moreover, this cobordism is compact by Theorem \ref{compact1}. Moreover, we have a well-defined evaluation map 
	\[\operatorname{ev}_{\{J_t\}} :\widehat{\mathcal{M}}(\{J_t\}_{t\in [0,1]}, [\mathbb{CP}^1\times\{\operatorname{pt}\}])\times_{\operatorname{Aut}(\mathbb{CP}^1,i)} \mathbb{CP}^1\to \mathbb{CP}^1\times M\]
defined by $\operatorname{ev}_{\{J_t\}}([(t,u,z)])=\operatorname{ev}_{J_t}([(u,z)])$. The map $\operatorname{ev}_{\{J_t\}}$ is a smooth homotopy from $\operatorname{ev}_{J_0}=\operatorname{ev}_{i\oplus J_M}$ to $\operatorname{ev}_{J}$. From Lemma \ref{split0}, the mod $2$ mapping degree of $\operatorname{ev}_{i\oplus J_M}$ does not vanish, i.e.,
\[\operatorname{deg}(\operatorname{ev}_{i\oplus J_M})=1\, (\text{mod } 2).\]
By the homotopy-invariance of mapping degree, we have 
\[\operatorname{deg}(\operatorname{ev}_{J})=\operatorname{deg}(\operatorname{ev}_{i\oplus J_M})=1\, (\text{mod } 2).\]	
This means that for generic point $p\in \mathbb{CP}^1\times M$, $\operatorname{ev}^{-1}_{J}(p)$ is not empty. In other words, there exists a $J$-holomorphic sphere $u:(\mathbb{CP}^1,i)\to (\mathbb{CP}^1\times M,J)$ that passes through $p$ and represents the homology class $[\mathbb{CP}^1\times \{\operatorname{pt}\}] \in H_2(\mathbb{CP}^1\times M,\mathbb{Z} )$.
\end{proof}
\begin{proof}[Proof of Theorem \ref{ab}]
Given $J \in \mathcal{J}_c(\mathbb{CP}^1\times M, \omega_{\mathrm{FS}}\oplus\omega)$ and point $p\in \mathbb{CP}^1\times M$.
	By  Lemma \ref{split2}, one can choose a sequence $J_n\in \mathcal{J}_{\mathrm{reg}}$ that $C^{\infty}$-converges to $J$, a sequence $p_n\in \mathbb{CP}^1\times M$ converging to $p$, and elements $[(u_n,J_n,z_n)]\in \widehat{\mathcal{M}}(J_n, [\mathbb{CP}^1\times\{\operatorname{pt}\}])\times_{\operatorname{Aut}(\mathbb{CP}^1,i)} \mathbb{CP}^1$ such that $u_n$ passes through $p_n$ at $z_n$ for each $n$. By Theorem \ref{compact1}, a subsequence of the sequence $[(u_n,J_n,z_n)]$ converges to some $[(u,J,z)]\in \widehat{\mathcal{M}}(J,[\mathbb{CP}^1\times\{\operatorname{pt}\}])\times_{\operatorname{Aut}(\mathbb{CP}^1,i)} \mathbb{CP}^1$ such that $u(z)=p$.
\end{proof}
%%%%%%%%%%%%%%%%%%%%%%%%%%%%%%%%%%%%%%%%%%%%%%%	
\AtEndDocument{
 \bibliographystyle{alpha}
\bibliography{extremallagrangian}
\Addresses}
\end{document}